\documentclass[11pt,a4paper]{article}
\usepackage[margin=1in]{geometry}
\usepackage{amsmath}
\usepackage{amssymb}
\usepackage{graphicx}
\usepackage{hyperref}
\usepackage{cite} 
\usepackage{authblk}

\usepackage{amsthm}
\usepackage{lastpage}
\usepackage{tikz}
\usepackage{comment}
\usepackage{algorithm}
\usepackage{algpseudocode}
\usepackage{subcaption}

\theoremstyle{definition}
\newtheorem{definition}{Definition}[section]
\newtheorem{problem}{Problem}[section]
\newtheorem{proposition}{Proposition}[section]

\theoremstyle{plain}
\newtheorem{lemma}{Lemma}[section]
\newtheorem{theorem}{Theorem}[section]

\title{Total b-chromatic Colouring of Graphs}
\author[1]{Fabricio Mendoza Granada \thanks{Corresponding author. Supported by a scholarship from the College of Science and Engineering, University of Glasgow, and Becas Carlos Antonio Lopez (BECAL), Paraguay.}\href{mailto:f.mendoza-granada.1@research.gla.ac.uk}{\footnote{f.mendoza-granada.1@research.gla.ac.uk}}}
\author[1]{David Manlove\href{mailto:David.Manlove@glasgow.ac.uk}\footnote{David.Manlove@glasgow.ac.uk}}
        \affil[1]{School of Computing Science, Sir Alwyn Williams Building, University of Glasgow, Glasgow G12 8QQ, UK}
\date{} 

\providecommand{\keywords}[1]
{
  \small	
  \textbf{\textit{Keywords---}} #1
}

\begin{document}
\maketitle

\begin{abstract}
A \emph{b-chromatic colouring} of a graph $G$ is a proper $k$-colouring of the vertices of $G$, for some integer $k$, such that, for each colour $i$ ($1\leq i\leq k$), there exists a vertex $v$ of colour $i$ such that $v$ is adjacent to a vertex of colour $j$, for each $j$ ($1\leq j\leq k$, $j\neq i$).  The \emph{b-chromatic number} of $G$ is the maximum integer $k$ such that $G$ admits a b-chromatic colouring using $k$ colours.  In this paper we introduce the concept of a \emph{total b-chromatic colouring}, which extends the notion of b-chromatic colourings to both vertices and edges in a graph.  We show that the problem of computing the total b-chromatic number is NP-hard in general graphs.  On the other hand for a subclass of caterpillars we give a polynomial-time algorithm to compute the total b-chromatic number, and indeed a total b-chromatic colouring with the maximum number of colours.
\end{abstract}

\keywords{algorithms; b-chromatic number; total colouring}



\section{Introduction}
\label{introduction}
The \emph{b-chromatic colouring} concept, introduced by Irving and Manlove \cite{irving1999b}, has been studied for the past 25 years. The idea of a b-chromatic colouring is built around the concept of a \emph{b-chromatic vertex}. Given a graph $G$ and a  colouring $\mathbf{c}$ of $G$, a b-chromatic vertex is a vertex $v$ that is adjacent to at least one vertex of every colour other than the colour of $v$. A b-chromatic colouring of $G$ is a  colouring such that for every colour $c$ there must exist a b-chromatic vertex of colour $c$. The \emph{b-chromatic number}, denoted by $\varphi(G)$, is the maximum integer $k$ such that $G$ admits a b-chromatic colouring using $k$ colours. A survey on the b-chromatic number of a graph can be found in \cite{jakovac2018b}.

The concept of a b-chromatic colouring can also be motivated through the use of the following colour-suppressing heuristic to approximate a  colouring that uses the minimum number of colours. Start with an arbitrary  colouring of $G$ and try to reduce the number of colours as follows.  For each colour $c$ in turn, check whether each vertex $v$ of colour $c$ has a ``spare'' colour in its neighbourhood; that is, whether $v$ is not already adjacent to a vertex of every other colour.  If this is the case, recolour each vertex of colour $c$ by one of the available colours that is not in its neighbourhood.  Then remove colour $c$ from the available colours. The procedure terminates when there is no colour available to suppress and the resulting colouring is then a b-chromatic colouring of $G$. The b-chromatic number then indicates the worst-case behaviour of this heuristic. The concept of b-chromatic colouring has also been applied to develop a new clustering technique \cite{elghazel2006new} and a new method for \emph{address block localization} in mail sorting systems \cite{gaceb2008improvement}.

From the above heuristic, it is clear that any  colouring of a graph $G$ that uses the minimum number of colours is a b-chromatic colouring. Hence, $\chi(G)\leq \varphi(G)$, where $\chi(G)$ is the chromatic number of $G$. The first upper bound for the b-chromatic number was given by Irving and Manlove \cite{irving1999b}. Suppose that the vertices $v_1,v_2,\dots, v_n$ of $G$ are ordered such that $d(v_1)\geq d(v_2)\geq \dots \geq d(v_n)$, where $d(v_i)$ represents the degree of $v_i$. The \emph{$m$-degree} of $G$, denoted as $m(G)$, is defined as the maximum integer $i$ for which $d(v_i)\geq i-1$. Then, it holds that $\varphi(G)\leq m(G)$ \cite{irving1999b}. Therefore, 
\begin{align}
    \chi(G)\leq \varphi(G) \leq m(G).
\end{align}

Given a graph $G$ and an integer $k$, the {\sc b-chromatic num} problem asks whether $\varphi(G)\geq k$. The {\sc b-chromatic num} problem is NP-complete for general \cite{irving1999b} and bipartite graphs \cite{manlove1998minimaximal,kratochvil2002b}. Furthermore, the problem remains hard for chordal graphs \cite{havet2012b}, the complement of bipartite graphs \cite{bonomo2015b}, and line graphs \cite{campos2015b}. However, computing a b-chromatic colouring with the maximum number of colours is solvable in polynomial time for trees \cite{irving1999b}, split graphs \cite{havet2012b}, $P_4$-free graphs \cite{bonomo2009b} and graphs with large girth \cite{campos2015graphs, kouider2017b}. Moreover, the b-chromatic number of a $d$-regular graph satisfies $\varphi(G)=d+1$ if $|V(G)|\geq 2d^3$ \cite{cabello2011b}. Recently, Jaffke et al.\ \cite{jaffke2024b} have extended the results on $P_4$-free graphs to graphs with bounded \emph{clique-width}. (Note that graphs of bounded treewidth have bounded clique-width, so this result generalises the earlier polynomial-time algorithm for trees.)
Two concepts related to b-chromatic colourings have been studied in the literature recently, namely \emph{z-colourings} \cite{Zaker2020} and \emph{b-Greedy colourings} \cite{CostaHavet2024}.

The \emph{b-chromatic edge-colouring} concept is an extension of b-chromatic colouring where edges are coloured instead of vertices. The \emph{b-chromatic index}, denoted by $\varphi'(G)$, is the maximum integer $k$ such that $G$ admits a b-chromatic edge-colouring using $k$ colours. Given graph $G$ and integer $k$, the {\sc b-chromatic index} problem asks whether $\varphi'(G)\geq k$. Note that a b-chromatic edge-colouring on a graph $G$ is equivalent to a b-chromatic colouring of the line graph of $G$. Hence, the {\sc b-chromatic index} problem is also NP-complete \cite{campos2015b}. Campos et.\ al.\ \cite{campos2018edge} showed that computing a b-chromatic edge colouring using the maximum number of colours is solvable in polynomial time in trees.

The concept of a \emph{total colouring} is an extension of graph colouring where all \emph{elements} of the graph (i.e., all vertices and edges) are coloured, and where no two elements (either vertices or edges) that are adjacent or incident to one another can be given the same colour. A total colouring of a graph $G$ is equivalent to a colouring of the \emph{total graph} $T(G)$ of $G$, which has vertex set $V(G)\cup E(G)$, with two vertices in $T(G)$ being adjacent if and only if the corresponding elements in $G$ are adjacent or incident. The \emph{total chromatic number}, denoted by $\chi_t(G)$, is the minimum integer $k$ such that $G$ admits a total colouring that uses $k$ colours. The {\sc total chromatic num} problem asks whether $\chi_t(G)\leq k$, for a given integer $k$. Sanchez-Arroyo \cite{sanchez1989determining} showed that the {\sc total chromatic num} problem is NP-complete even for cubic bipartite graphs. A natural lower bound for the total chromatic number is $\chi_t(G)\geq \Delta(G)+1$, where $\Delta(G)$ is the maximum degree of $G$. Vizing \cite{vizing1964estimate} and Behzad \cite{behzad1965graphs} independently conjectured that $\Delta(G)+1\leq \chi_t(G)\leq \Delta(G)+2$. A comprehensive survey on total colouring can be found in \cite{geetha2023total}.

In this work, we extend the concept of b-chromatic colourings to the domain of total colourings. In particular, we study \emph{total b-chromatic colourings}, which are built around the concept of a \emph{total b-chromatic element} -- this is an element $x$ that is adjacent or incident to an element of every colour other than the colour of $x$. A total b-chromatic colouring is a total colouring such that for every colour $c$, there exists a total b-chromatic element of colour $c$. Note that a total b-chromatic colouring of $G$ is equivalent to a b-chromatic colouring of $T(G)$. To the best of our knowledge, the concept of a total b-chromatic colouring has not been explicitly defined previously, and has only been studied indirectly in terms of b-chromatic colourings of total graphs that are complete bipartite graphs \cite{morgan13exploring}. 

In Section \ref{section:Preliminaries} we define total b-chromatic colourings and the total b-chromatic number formally. In Section \ref{section:np-completeness} we show that the problem of computing
the total b-chromatic number of a graph is NP-hard. We then restrict the input graph in order to identify tractable cases. In particular, in Section \ref{section:polynomial-time-algorithm-caterpillars} we
show that if $G$ belongs to a class of caterpillars (to be defined), the total b-chromatic number of $G$, and indeed a total b-chromatic colouring of $G$ with the maximum number of colours, can be computed in polynomial time. For space reasons, most of the proofs have been omitted, but can be found in the appendix.


\section{Preliminaries}
\label{section:Preliminaries}
Let $G=(V,E)$ be a simple undirected graph. Given a vertex $u\in V$, $N(u)$ and $E(u)$ will denote the set of vertices and edges adjacent and incident to $u$, respectively. Let $S\subseteq V$ be a subset of vertices of $G$. The \emph{induced} subgraph of $G$ on $S$ is denoted as $G[S]=(S, F)$ where $F=\{(u,v)\in E:u,v\in S\}$. A \emph{path} on $n\geq 1$ vertices in $G$ is a sequence $P_n=\langle w_1,w_2,\dots,w_n \rangle$ of distinct vertices in $V$ such that $(w_i,w_{i+1})\in E$ for $i=1,\dots,n-1$. The \emph{length} of a path $P_n$, denoted by $\ell(P_n)$, is the number of edges in $P_n$, i.e., $\ell(P_n)=n-1$. Whenever $G$ is isomorphic to $P_n$ we will refer to $G$ as a path $P_n$. Furthermore, if the number of vertices in the path $P_n$ is clear from the context, we will refer to $P_n$ as $P$. A \emph{star} is a complete bipartite graph $K_{1,n}$ with a single vertex in one side of the bipartition and $n$ vertices on the other side of the bipartition.

We will call an \emph{element} of $G$ any member $x\in V\cup E$; in this sense, an element of a graph can be a vertex or an edge. Now, let $x,y\in V\cup E$ be two elements of $G$. Elements $x$ and $y$ are \emph{adjacent} if (i) $x,y\in V$ and $(x,y)\in E$, or (ii) $x\in V$ and $y\in E$ and $x$ is an endpoint of $y$, or (iii) $x,y\in E$ and $x$ and $y$ share an endpoint. The \emph{total neighbourhood} of $x$, denoted by $N_t(x)$, is defined as follows: $N_t(x)=\{y\in V\cup E: y \text{ is adjacent to } x\}$. The \emph{total degree} of an element $x\in V\cup E$, denoted by $d_t(x)$, is the cardinality of its total neighbourhood $N_t(x)$. Therefore, the total degree of a vertex $v\in V(G)$ is given by $d_t(v)=2d(v)$, and the total degree of an edge $e\in E$, where $e=(u,v)$, is given by $d_t(e)=d(u)+d(v)$. Next, let $[k]=\{1,\dots,k\}$. We will now define the concept of total colouring in a graph. 


\begin{definition}\label{definition:total--k-colouring}
     A \emph{total  $k$-colouring} of a graph $G=(V,E)$ is a surjective function $\mathbf{c}:V\cup E\rightarrow [k]$, for an integer $k$, such that for every pair of elements $x,y$ in $G$ where $y\in N_t(x)$, $\mathbf{c}(x)\neq \mathbf{c}(y)$.
\end{definition}
When $k$ is clear from the context we will refer to a total $k$-colouring as a total colouring. Given a total colouring $\mathbf{c}:V\cup E\rightarrow [k]$ of $G$, we will say that an element $x$ is ``picking up'' colour $c\in [k]$ if there exists an element $y\in N_t(x)$ such that $\mathbf{c}(y)=c$.\ Definition \ref{definition:total--k-colouring} can be extended to any subset of elements of $G$: for a given $S\subseteq V\cup E$, $\mathbf{c}(S)=\{\mathbf{c}(x):x\in S\}$. We will say that $\mathbf{c}$ is a \emph{partial total colouring} if there exists some uncoloured element $x$ in $G$. Given a graph $G$ and a total colouring of $G$, $\mathbf{c}:V\cup E\rightarrow [k]$ we say that an element $x\in V\cup E$ of colour $c\in [k]$ is a \emph{total b-chromatic element} if $\mathbf{c}(N_t(x))=[k]\setminus \{c\}$. A \emph{total b-chromatic $k$-colouring} of a graph $G$ is a total $k$-colouring $\mathbf{c}$ such that there exists a total b-chromatic element $x$ of colour $c$ for every $c\in [k]$. The \emph{total b-chromatic number} of $G$, denoted by $\varphi_t(G)$, is defined as the maximum integer $k$ for which there exists a total b-chromatic $k$-colouring of $G$. Next, we will give an analogous definition of the $m$-degree for a total colouring of a graph.

\begin{definition}\label{definition:total-m-degree}
    Let $G=(V,E)$ be a graph and let $\Gamma = \{x_1,x_2\dots, x_{|V|+|E|}\}$ be the set of elements of $G$. Assume that elements in $\Gamma$ are sorted such that $d_t(x_1)\geq d_t(x_2)\geq \dots \geq d_t(x_{|V|+|E|})$. The \emph{total $m$-degree} of $G$ is defined as $m_t(G)=\max \{i:d_t(x_i)\geq i-1\}$.\qed
\end{definition}
Definition \ref{definition:total-m-degree} says that the total $m$-degree of a graph $G$ is the maximum integer $i$ such that $G$ contains at least $i$ elements with total degree at least $i-1$. Observe that the total $m$-degree of a graph $G$ gives an upper bound for the total b-chromatic number of the graph.
\begin{lemma}\label{lemma:total-m-degree}
    $\varphi_t(G)\leq m_t(G)$
\end{lemma}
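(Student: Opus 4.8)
The plan is to start from a total b-chromatic $k$-colouring that achieves $k=\varphi_t(G)$ and to extract from it $k$ distinct elements each of total degree at least $k-1$; by Definition \ref{definition:total-m-degree} this immediately forces $m_t(G)\geq k$. This mirrors the classical argument giving $\varphi(G)\leq m(G)$.

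First I would set $k=\varphi_t(G)$ and fix a total b-chromatic $k$-colouring $\mathbf{c}$ of $G$. By definition, for every colour $c\in[k]$ there exists a total b-chromatic element $x_c$ of colour $c$, that is, $\mathbf{c}(N_t(x_c))=[k]\setminus\{c\}$. The central observation is that $x_c$ must pick up each of the $k-1$ colours different from $c$, so its total neighbourhood $N_t(x_c)$ must contain at least $k-1$ distinct elements (at least one carrying each of those colours). Hence $d_t(x_c)=|N_t(x_c)|\geq k-1$.

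Next I would note that the elements $x_1,\dots,x_k$ are pairwise distinct, since $x_c$ has colour $c$ and the colours $1,\dots,k$ are all different. Thus $G$ contains at least $k$ elements whose total degree is at least $k-1$. Recalling that $m_t(G)$, as given by Definition \ref{definition:total-m-degree}, is equivalently the largest integer $i$ for which $G$ has at least $i$ elements of total degree at least $i-1$, taking $i=k$ shows that $k$ satisfies this defining condition, so $m_t(G)\geq k=\varphi_t(G)$.

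There is no serious obstacle here: the whole argument is a short counting step. The only point deserving a little care is the passage between the sorted-sequence formulation of Definition \ref{definition:total-m-degree} and the reformulation ``$m_t(G)$ is the largest $i$ such that $G$ has at least $i$ elements of total degree $\geq i-1$''. I would verify this equivalence explicitly, so that exhibiting the $k$ qualifying elements $x_1,\dots,x_k$ genuinely guarantees $d_t(x_k)\geq k-1$ in the sorted ordering and hence $m_t(G)\geq k$.
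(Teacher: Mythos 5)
Your proof is correct and is essentially the same argument as the paper's: both rest on the observation that a total b-chromatic $k$-colouring supplies $k$ distinct elements (one per colour) each of total degree at least $k-1$, which by Definition \ref{definition:total-m-degree} forces $m_t(G)\geq k$. The only difference is presentational—the paper phrases the counting step as a proof by contradiction (assuming $\varphi_t(G)\geq m_t(G)+1$), whereas you argue directly; your explicit check of the distinctness of the $x_c$ and of the equivalence between the sorted-sequence definition and the ``at least $i$ elements of total degree $\geq i-1$'' formulation is a slight gain in rigour, not a change of method.
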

\begin{proof}
    Suppose by contradiction that $\varphi_t(G)\geq m_t(G)+1$. Then there must exist at least $m_t(G)+1$ elements in $G$ adjacent to at least $m_t(G)$ elements of different colours. This implies that each of these $m_t(G)+1$ elements must have total degree at least $m_t(G)$. As $G$ has at least $m_t(G)+1$ elements with total degree at least $m_t(G)$, then the total $m$-degree of $G$ must be at least $m_t(G)+1$, a contradiction.
\end{proof}
It is natural to consider elements with high total degree as ``candidates'' to become total b-chromatic. We will now introduce two definitions to determine when an element is a candidate to become total b-chromatic in a given graph. First, assume that $G$ has total $m$-degree $m_t(G)=m$. A element $x\in V\cup E$ is \emph{total dense} if $d_t(x)\geq m-1$. An element $x\in V\cup E$ is \emph{total tight} if $d_t(x)=m-1$. Observe that if an element $x$ is total tight then it can pick up exactly $m-1$ different colours. Now, given a subset of elements $S\subseteq V\cup E$, we say that $S$ is a set of total dense elements (resp.\ total tight) if for each $x\in S$ it holds that $d_t(x)\geq m-1$ (resp.\ $d_t(x)=m-1)$. Now, we will compute the total b-chromatic number of a path and a star on $n$ vertices.

\begin{proposition}\label{proposition:total-m-degree-paths}
$m_t(P_1)=1,
m_t(P_2)=m_t(P_3)=3, 
m_t(P_4)=4, 
m_t(P_n)=5$ for $n\geq 5$.
\end{proposition}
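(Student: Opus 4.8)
The plan is to apply Definition \ref{definition:total-m-degree} directly: for each path I will write down the total degrees of all its $2n-1$ elements (the $n$ vertices and $n-1$ edges), sort them in non-increasing order, and read off the largest index $i$ for which the $i$-th largest total degree is at least $i-1$. The computation is driven by the two formulas already recorded in the excerpt, namely $d_t(v)=2d(v)$ for a vertex $v$ and $d_t(e)=d(u)+d(v)$ for an edge $e=(u,v)$. The crucial structural observation is that in any path the degree of a vertex is at most $2$, so the total degree of every element is at most $4$; this single fact will cap the total $m$-degree at $5$ for all sufficiently long paths.

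First I would dispose of the small cases $P_1,P_2,P_3,P_4$ by explicit enumeration. For $P_1$ the only element is an isolated vertex of total degree $0$, giving $m_t=1$. For $P_2$ the two vertices and the single edge all have total degree $2$, so the sorted sequence is $(2,2,2)$ and $m_t=3$. For $P_3$ the middle vertex has total degree $4$, the two edges have total degree $3$, and the two leaves have total degree $2$, producing $(4,3,3,2,2)$ and $m_t=3$. For $P_4$ the two internal vertices and the central edge have total degree $4$, the two pendant edges have total degree $3$, and the two leaves have total degree $2$, producing $(4,4,4,3,3,2,2)$ and $m_t=4$. In each case I simply check the defining inequality index by index until it first fails.

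For the general case $n\geq 5$ I would split the argument into a matching upper and lower bound. For the upper bound, since every total degree is at most $4$, for any index $i\geq 6$ we have $d_t(x_i)\leq 4<i-1$, so no index beyond $5$ can satisfy the condition and hence $m_t(P_n)\leq 5$. For the lower bound I would count the elements of total degree exactly $4$: these are the $n-2$ internal vertices together with the $n-3$ internal edges, giving $2n-5$ such elements. For $n\geq 5$ we have $2n-5\geq 5$, so at least five elements have total degree $4\geq 5-1$, whence $d_t(x_5)\geq 4$ and $m_t(P_n)\geq 5$. Combining the two bounds yields $m_t(P_n)=5$.

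The argument involves no real obstacle; the only points that require care are the boundary effects at the ends of the path (leaves contribute total degree $2$ and pendant edges total degree $3$, both strictly below the generic value $4$) and the correct merging of the vertex and edge total degrees into a single sorted sequence. Keeping an accurate tally of how many elements meet each threshold is the one place where an off-by-one slip could occur, so I would verify that the counts $2n-5$, $2$, and $2$ sum to $2n-1=|V|+|E|$ as a consistency check.
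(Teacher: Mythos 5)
Your proof is correct and follows essentially the same approach as the paper's: compute total degrees directly from $d_t(v)=2d(v)$ and $d_t(e)=d(u)+d(v)$, settle $P_1$--$P_4$ by enumeration, and for $n\geq 5$ combine the upper bound coming from the cap $d_t(x)\leq 4$ with a count of elements of total degree $4$ (the paper counts three internal vertices and two internal edges; you count all $2n-5$ such elements). If anything, your version is slightly more complete, since you verify the failing index explicitly in each small case, whereas the paper's proof states only the lower bounds and leaves the matching upper bounds implicit.
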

\begin{proposition}\label{proposition:total-b-chromatic-colouring-paths}
    Let $P$ be a path. Then $\varphi_t(P)=m_t(P)$.
\end{proposition}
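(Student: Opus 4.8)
The plan is to combine the upper bound already in hand with a matching explicit construction. By Lemma \ref{lemma:total-m-degree} we have $\varphi_t(P)\leq m_t(P)$, so it suffices to exhibit, for each path $P$, a total b-chromatic colouring using exactly $m_t(P)$ colours, where the target value of $m_t(P)$ is read off from Proposition \ref{proposition:total-m-degree-paths}. The short paths $P_1,P_2,P_3,P_4$ (with targets $1,3,3,4$) are finite and will be dispatched by writing down one colouring each and checking directly that every colour class contains a total b-chromatic element; for example, on $P_2$ the two vertices and the single edge are pairwise adjacent in the total graph, so colouring them $1,2,3$ makes all three elements total b-chromatic. The substantive case is $P_n$ with $n\geq 5$, where the target is $m_t(P_n)=5$.

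For the main case I would first record the structure of the total graph. Listing the elements of $P_n$ in the natural interleaved order $x_1,x_2,\dots,x_{2n-1}=w_1,e_1,w_2,e_2,\dots,e_{n-1},w_n$, one checks from the adjacency rules of the Preliminaries that $x_i$ and $x_j$ are adjacent in $T(P_n)$ precisely when $|i-j|\in\{1,2\}$; that is, $T(P_n)$ is the square of a path on $2n-1$ vertices. The key idea is then to colour $x_j$ by $c(x_j)=((j-1)\bmod 5)+1$, i.e.\ the periodic pattern $1,2,3,4,5,1,2,3,4,5,\dots$. Since the period $5$ exceeds $2$, any two elements at distance $1$ or $2$ in the order receive different colours, so this is a proper total colouring.

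The payoff is that the $\mathbb{Z}_5$-structure of the pattern makes b-chromaticity automatic for interior elements. If $x_j$ has its full complement of four neighbours $x_{j-2},x_{j-1},x_{j+1},x_{j+2}$, their colours are $c(x_j)-2,c(x_j)-1,c(x_j)+1,c(x_j)+2$ taken modulo $5$, which is exactly $[5]\setminus\{c(x_j)\}$; hence every such $x_j$ is total b-chromatic. It then remains only to verify that the colouring is surjective onto $[5]$ and that there is an interior element of each of the five colours. This is where the hypothesis $n\geq 5$ enters: then $2n-1\geq 9$, the positions $3\leq j\leq 2n-3$ are all interior, and the five consecutive positions $3,4,5,6,7$ already carry the five distinct colours $3,4,5,1,2$. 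Combining this construction with $\varphi_t(P_n)\leq m_t(P_n)=5$ yields equality.

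I expect the main obstacle to lie not in the long-path case, which the periodic $\mathbb{Z}_5$-colouring handles cleanly, but in ensuring that the boundary does not obstruct coverage: the first two and last two elements of the order have total degree below $4$ and can never be total b-chromatic, so one must confirm that enough interior elements survive to realise all five colours, which is precisely what forces the threshold $n\geq 5$. A secondary, purely routine chore is the case analysis for $P_1,\dots,P_4$, where the graph is too small for the periodic pattern and each colouring together with its witnessing total b-chromatic elements must be exhibited and checked by hand.
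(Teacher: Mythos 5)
Your proposal is correct. It proves the right two halves: the upper bound comes from Lemma \ref{lemma:total-m-degree}, the small cases $P_1,\dots,P_4$ are finite checks against the targets of Proposition \ref{proposition:total-m-degree-paths}, and for $n\geq 5$ your periodic colouring $c(x_j)=((j-1)\bmod 5)+1$ on the interleaved order is proper (adjacent elements sit at distance $1$ or $2$ in that order, and $5>2$), makes every element with a full set of four total neighbours pick up exactly $[5]\setminus\{c(x_j)\}$, and the interior positions $3,\dots,7$ (available precisely when $n\geq 5$) carry all five colours. Interestingly, the paper's construction for $n\geq 5$ is essentially your colouring in disguise -- its assignment $w_1\mapsto 4$, $(w_1,w_2)\mapsto 5$, $w_2\mapsto 1$, $(w_2,w_3)\mapsto 2$, $w_3\mapsto 3$, $(w_3,w_4)\mapsto 4$, $w_4\mapsto 5$, $(w_4,w_5)\mapsto 1$, $w_5\mapsto 2$ is your mod-$5$ pattern shifted by three -- but the verification is organised differently: the paper colours only a window of nine elements, designates five specific witnesses ($w_2$, $(w_2,w_3)$, $w_3$, $(w_3,w_4)$, $w_4$), and completes the colouring greedily by arguing each remaining element has a spare colour. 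Your route replaces that greedy completion with a uniform formula: by identifying $T(P_n)$ with the square of a path on $2n-1$ vertices, \emph{every} interior element is automatically a witness, so b-chromaticity reduces to a one-line computation in $\mathbb{Z}_5$. What the paper's window-plus-greedy style buys is a template that it reuses in the caterpillar arguments later; what yours buys is a shorter, fully explicit colouring with no completion step. The only outstanding debt in your write-up is the routine exhibition of explicit colourings for $P_3$ and $P_4$ (you spelled out only $P_2$), but these are trivial finite checks and do not affect correctness.
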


\begin{proposition}\label{proposition:total-b-chromatic-colouring-stars}
    $\varphi_t(K_{1,n})=m_t(K_{1,n})=n+1$. 
\end{proposition}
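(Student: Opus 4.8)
The plan is to compute $m_t(K_{1,n})$ directly, read off the upper bound from Lemma \ref{lemma:total-m-degree}, and then exhibit a total b-chromatic $(n+1)$-colouring to match it. First I would fix notation for the structure of $K_{1,n}$: write $c$ for the centre, $v_1,\dots,v_n$ for the leaves, and $e_i=(c,v_i)$ for the edges. Applying the total-degree formulas gives $d_t(c)=2n$, $d_t(e_i)=n+1$, and $d_t(v_i)=2$. Sorting these $2n+1$ values in non-increasing order yields $2n$, followed by $n$ copies of $n+1$, followed by $n$ copies of $2$. Evaluating the condition $d_t(x_i)\geq i-1$ at $i=n+1$ (which indexes an edge, and $n+1\geq n$ holds) and at $i=n+2$ (which indexes a leaf, and $2\geq n+1$ fails for $n\geq 2$) shows that $m_t(K_{1,n})=n+1$, and hence $\varphi_t(K_{1,n})\leq n+1$ by Lemma \ref{lemma:total-m-degree}.

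For the matching lower bound, the key observation is that in the total graph the centre together with all $n$ edges forms a clique of size $n+1$: the centre is incident to every $e_i$, and any two edges share the endpoint $c$. Consequently, in any proper total colouring these $n+1$ elements must receive pairwise distinct colours, and if exactly $n+1$ colours are used then each of them picks up all of the other $n$ colours from within the clique and is therefore automatically total b-chromatic. I would make this concrete by assigning colour $n+1$ to $c$ and colour $i$ to each $e_i$ for $1\leq i\leq n$; this single assignment already guarantees a total b-chromatic element of \emph{every} colour in $[n+1]$, with the centre witnessing colour $n+1$ and each edge $e_i$ witnessing colour $i$.

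It then remains only to colour the leaves so that the colouring is proper. Each leaf $v_i$ has total neighbourhood $\{c,e_i\}$, using just the two colours $n+1$ and $i$; since $n+1\geq 3$ there is always an unused colour available to assign to $v_i$, and distinct leaves impose no constraints on one another as they are pairwise non-adjacent. Checking that the completed colouring is a proper total colouring is then routine. I do not anticipate a genuine obstacle: the clique observation is doing all the work, since it makes the b-chromatic property immediate and spares us any case analysis on the leaf colours. The only point needing care is that the stated identity assumes $n\geq 2$; for $n=1$ the graph is $P_2$ and Proposition \ref{proposition:total-m-degree-paths} gives $m_t(P_2)=3\neq n+1$.
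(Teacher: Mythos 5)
Your proposal is correct and takes essentially the same route as the paper's own proof: both assign the $n+1$ distinct colours to the centre and its $n$ incident edges (which form a clique in the total graph, so each of these elements immediately picks up all other colours and is total b-chromatic), and then finish by giving each leaf a spare colour to make the total colouring proper. If anything, you are more careful than the paper, which leaves the computation $m_t(K_{1,n})=n+1$ implicit and asserts its construction works for all $n\geq 1$, whereas, as you correctly flag, the identity fails at $n=1$ since $K_{1,1}=P_2$ and $m_t(P_2)=\varphi_t(P_2)=3\neq 2$.
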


Note that if $T$ consists of a forest of stars $K_{1,n_1}, \dots, K_{1,n_p}$, then $\varphi_t(T)=\max_{1\leq i\leq p} \{\varphi_t(K_{1,n_i})\}$.

\section{NP-completeness in general graphs}
\label{section:np-completeness}

We will now prove that deciding whether $\varphi_t(G)\geq k$ for a given graph $G$ and an integer $k$ is NP-complete. This can be formalised as follows:

\begin{problem}\label{problem:total-b-chromatic-number}
    {\sc total b-chromatic num}

    Input: \emph{Graph $G$ and integer $k$.}
    
    Question: \emph{Is $\varphi_t(G)\geq k$?}\qed
\end{problem}

We will prove that {\sc total b-chromatic num} is NP-complete by reducing from the following NP-complete problem:

\begin{problem}\label{problem:total-colouring-cubic-graph}
{\sc total colouring cubic-bipartite}

    Input: \emph{Cubic bipartite graph $G$.}
    
    Question: \emph{Is there a total colouring of $G$ with $\leq 4$ colours?}\qed
\end{problem}
The {\sc total colouring cubic-bipartite} problem is known to be NP-complete \cite{sanchez1989determining}. The main result of this section is the following:
\begin{theorem}\label{theorem:total-b-chromatic-colouring-np-complete}
    {\sc total b-chromatic num} is \emph{NP-complete}.
\end{theorem}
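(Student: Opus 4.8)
\emph{Membership in NP.} A total b-chromatic $k$-colouring is a certificate of size $O(|V|+|E|)$. Given a candidate colouring one verifies in polynomial time that it is a proper total colouring (by inspecting every pair of incident or adjacent elements) and that, for each colour $c\in[k]$, there is an element $x$ with $\mathbf{c}(x)=c$ and $\mathbf{c}(N_t(x))=[k]\setminus\{c\}$. Thus {\sc total b-chromatic num} lies in NP, and the remaining task is to supply a polynomial-time reduction from {\sc total colouring cubic-bipartite}.

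\emph{Reduction strategy.} Given a cubic bipartite graph $G$, I would build in polynomial time a graph $G'$ and an integer $k$ so that $\varphi_t(G')\ge k$ if and only if $G$ admits a total colouring with at most four colours. The starting observation is that every element of a cubic graph has total degree exactly $6$ (a vertex has $d_t=2\cdot 3$ and an edge $e=(u,v)$ has $d_t=d(u)+d(v)=6$), and that a total $4$-colouring of such a graph is rigid: at each vertex the vertex and its three incident edges are mutually adjacent, so these four elements must carry all four colours. The graph $G'$ keeps a copy of $G$ and attaches a gadget $H$ whose purpose is twofold — to make $m_t(G')=k$ and, via Lemma~\ref{lemma:total-m-degree}, to confine the total b-chromatic representatives of the ``surplus'' colours to $H$; and to force, at the interface with $G$, that these surplus colours already appear in the total neighbourhood of every element of $G$, so that each such element is restricted to a fixed palette of size four. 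The residual constraints on the copy of $G$ are then precisely those of a proper total $4$-colouring.

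\emph{Both directions.} For the forward direction, a total $4$-colouring of $G$ with colours $\{1,2,3,4\}$ is extended to $G'$ by colouring $H$ so as to produce a total b-chromatic element of every colour in $[k]$; the rigidity of the $4$-colouring makes the $G$--$H$ interface consistent, giving $\varphi_t(G')\ge k$. For the reverse direction, starting from a total b-chromatic $k$-colouring of $G'$, I would use Lemma~\ref{lemma:total-m-degree} and the total degrees in $G'$ to show that every element of the copy of $G$ is restricted to four colours and that satisfying the b-chromatic demand for each colour of $H$ forces these four colours to form a proper total colouring of $G$; restricting to $G$ yields the required total $4$-colouring.

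\emph{Main obstacle.} The crux is the design of $H$ and the reverse implication. One must arrange simultaneously that $k=m_t(G')$ (so that Lemma~\ref{lemma:total-m-degree} pins down which elements can possibly be b-chromatic), that the b-chromatic requirement genuinely forbids every element of $G$ from using a colour outside the fixed four-set — ruling out colourings that ``cheat'' by exploiting the extra colours — and that the b-chromatic condition for the colours hosted by $H$ is satisfiable exactly when $G$ is total $4$-colourable. Achieving all three while attaching $H$ without uncontrollably perturbing the degree-$6$ structure of $G$ is the delicate engineering step.
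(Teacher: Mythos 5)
Your NP-membership argument is fine and matches the paper's. The hardness part, however, has a genuine gap: you never actually construct the reduction. You correctly choose the same source problem as the paper ({\sc total colouring cubic-bipartite}), and your list of desiderata for the gadget --- that $k$ should equal $m_t(G')$ so that Lemma~\ref{lemma:total-m-degree} pins down the candidate b-chromatic elements, that the interface should force every element of the copy of $G$ into a fixed palette of four colours, and that the surplus colours should be representable inside the gadget exactly when $G$ is total $4$-colourable --- is precisely the right specification. But a specification is not a proof. What you label the ``delicate engineering step'' and leave open is the entire mathematical content of the hardness proof: without an explicit gadget, an explicit value of $k$, and a verification of both directions of the equivalence, nothing has been reduced to anything.

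For comparison, the paper's construction is quite concrete and simpler than your ``obstacle'' paragraph suggests: add one vertex $v$ adjacent to all $n$ vertices of $G$, four vertices $v_1,\dots,v_4$ adjacent to $v$, and $n+3$ pendant leaves on each $v_i$; take $k=n+9$. The total degrees then work out so that exactly $n+9$ elements (the $n$ edges $(u_j,v)$, the vertex $v$, the four edges $(v,v_i)$ and the four vertices $v_i$) have total degree at least $n+8$, giving $m_t=n+9$. The key rigidity you gesture at comes for free from tightness: each edge $(u_j,v)$ has total degree exactly $n+8$, so to be b-chromatic it must see $n+8$ \emph{distinct} colours; after accounting for the $n+4$ colours visible through $v$ and the other interface edges, the remaining four colours must appear, without repetition, on $u_j$ and its three $G$-edges --- which is exactly the statement that the restriction to $G$ is a proper total $4$-colouring. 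The pendant stars supply b-chromatic elements for the colours not represented at the interface. Your outline is consistent with this construction, but as submitted it is a plan for a proof rather than a proof.
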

\begin{proof}
    First, we will prove that $\text{{\sc total b-chromatic num}}\in \text{NP}$. Consider a graph $G=(V,E)$ and a total colouring $\mathbf{c}:V\cup E\rightarrow [k']$ for some integer $k'$. First, check that $k'\geq k$. Then, for every colour class $\mathbf{c}^{-1}(i)$, for $1\leq i\leq k'$, check whether there exists some element $x\in \mathbf{c}^{-1}(i)$ such that $\mathbf{c}(N_t(x))=[k']\setminus \{\mathbf{c}(x)\}$. This can be done in polynomial time so $\text{{\sc total b-chromatic num}}\in \text{NP}$.

    It remains to show that $\text{{\sc total b-chromatic num}}\in \text{NP-hard}$. To do so, we are going to reduce from {\sc total colouring cubic-bipartite} . Let $G$ be an instance of {\sc total colouring cubic} with vertex set $V(G)=\{u_1,\dots,u_n\}$. We will construct a graph $H$ as follows. The vertex set of $H$ is defined as $V(H)=V(G)\cup \{v\}\cup \{v_1,v_2,v_3,v_4\}\cup \{u_i^j:1\leq i\leq 4 \text{ and } 1\leq j\leq n+3\}$. The edge set of $H$ is defined as $E(H)=E(G)\cup \{(u_j,v):1\leq j\leq n)\}\cup \{(v,v_i):1\leq i\leq 4\}\cup \{(v_i,u_i^j):1\leq i\leq 4 \text{ and } 1\leq j\leq n+3\}$. The resulting graph $H$ can be observed in Figure \ref{fig:b-totcol-reduction}, presented in Appendix \ref{appendix:section:proofs-omitted-np-completeness}. We claim that $G$ has a total colouring using 4 colours if and only if $H$ has a total b-chromatic colouring using $n+9$ colours. The proof of this claim can be found in Appendix \ref{appendix:section:proofs-omitted-np-completeness}.
\end{proof}

\section{Polynomial-time algorithm for a class of caterpillars}

\label{section:polynomial-time-algorithm-caterpillars}

In this section we will provide polynomial-time algorithms to compute a total b-chromatic colouring on a class of \emph{caterpillars} (which are a subclass of trees). A \emph{caterpillar} is a tree $T$ with a central path $\mathcal{P}$ such that every leaf node in $T$ is at a distance one from $\mathcal{P}$. In particular every vertex on $\mathcal{P}$ has degree at least 2. Note that $\mathcal{P}$ is empty if $T$ is isomorphic to $K_1$ or $K_2$. First, we will prove that some caterpillars only admit a total b-chromatic colouring using $m_t(T)-1$ colours. Then, we will give some sufficient conditions for a caterpillar $T$ to admit a total b-chromatic $m_t(T)$-colouring. We will now introduce some notation.  Let $T$ be a caterpillar with central path $\mathcal{P}$, and let $P=\langle w_1,\dots,w_k\rangle$ be a subpath of $\mathcal{P}$ with $k$ vertices. The \emph{subcaterpillar} induced by $P$, denoted by $T[P]$, is the induced subgraph $T[S]$, where $S=\bigcup_{i=1}^kN(w_i)$. The next definition is the main tool of our polynomial-time algorithms.

\begin{definition}[Total dense path]\label{definition:total-dense-path}
    Let $G$ be a graph with total $m$-degree $m_t(G)=m$. A path $P=\langle w_1,w_2,\dots, w_k \rangle$ on $k$ vertices in $G$ is a \emph{total dense} path if vertices $w_2,\dots, w_{k-1}$ are total dense and every edge $(w_i,w_{i+1})$ is total dense, for $i=1,\dots,k-1$.\qed
\end{definition}

Let $P=\langle w_1,\dots,w_k \rangle$ be a total dense path on $k$ vertices in $G$, for some $k\geq 1$. The \emph{boundary vertices} of $P$ are $w_1$ and $w_k$. If $k>1$, then the \emph{boundary edges} of $P$ are $(w_1,w_2)$ and $(w_{k-1},w_k)$. Let $q$ be the number of total dense elements of $P$. Since $w_1$ and $w_k$ may not be total dense, it follows that $q\in \{2k-3,2k-2,2k-1\}$. Therefore, we will define three types of total dense paths; type 1: $P$ has $q=2k-1$ total dense elements; type 2: $P$ has $q=2k-2$ total dense elements; and type 3: $P$ has $q=2k-3$ total dense elements. If $P$ is a total dense path of type 2 or 3, then we will let $P'$ be the total dense subpath of $P$ with the maximum number of vertices such that $P'$ is a total dense path of type 1. Let $P$ be a general path now, not necessarily total dense. Now let $Q=\langle z_1,\dots,z_l \rangle$ be a path on $l$ vertices in $G$. We say that paths $P$ and $Q$ are \emph{adjacent} in $G$ if $w_k=z_1$. We denote by  $\langle P,Q\rangle=\langle w_1,\dots,w_k,z_1,\dots z_l\rangle$ the path in $G$ with $P$ and $Q$ concatenated. The proof of the next theorem shows how to compute a total b-chromatic colouring using $\varphi_t(T)$ colours on a caterpillar with $m_t(T)\leq 5$.

\begin{theorem}\label{theorem:total-b-chromatic-colouring-caterpillar-total-m-degree-at-most-5}
    Let $T$ be a connected caterpillar with $m_t(T)\leq 5$. Then, $\varphi_t(T)=m_t(T)$.
\end{theorem}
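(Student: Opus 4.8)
The upper bound $\varphi_t(T)\le m_t(T)$ is already supplied by Lemma~\ref{lemma:total-m-degree}, so the entire task is to exhibit a total b-chromatic colouring of $T$ that uses exactly $m:=m_t(T)$ colours. The plan is to proceed by a case analysis on the value of $m\in\{1,2,3,4,5\}$, constructing an explicit colouring in each case and verifying that it produces a total b-chromatic element of every colour.

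The small cases ($m\le 3$) should be dispatched quickly: a connected caterpillar with such a small total $m$-degree is forced to be a very restricted graph (a short path or a small star, possibly with a few extra leaves), so either Propositions~\ref{proposition:total-b-chromatic-colouring-paths} and~\ref{proposition:total-b-chromatic-colouring-stars} apply directly, or a colouring can be written down and checked by hand.

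The substance lies in $m\in\{4,5\}$. Here I would first locate the \emph{total dense} elements, i.e.\ those $x$ with $d_t(x)\ge m-1$; in a caterpillar these cluster around the high-degree vertices of the central path $\mathcal{P}$ and the edges joining them, which is precisely the situation the notion of a \emph{total dense path} (Definition~\ref{definition:total-dense-path}) is designed to capture. Since a type-1 total dense path on $k$ vertices contributes $2k-1$ total dense elements, a dense subpath on a handful of vertices already supplies the $m$ candidate elements I need to realise as b-chromatic. I would designate $m$ of these elements---some internal central vertices together with the dense edges between them---to carry the $m$ colours, colour this dense backbone so that each designated element receives one colour while its total neighbourhood receives the remaining $m-1$, and then use the pendant leaves and their incident edges to supply any colours still missing from each designated element's palette. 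Finally I would extend the colouring greedily to the rest of $T$; because each remaining uncoloured element has bounded total degree, this extension never gets stuck.

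The main obstacle will be the \emph{total tight} elements---those with $d_t(x)=m-1$ exactly. For such an element there is no slack: to be b-chromatic it must see all $m-1$ other colours, so every element of its total neighbourhood is forced to receive a distinct colour, and these forced choices propagate along the dense backbone and collide with the properness constraints and with the palettes of the other designated elements. Coordinating these simultaneous forced colourings so that the result is still a proper total colouring is the crux of the argument, and it is exactly this bookkeeping that the hypothesis $m\le 5$ keeps finite and manageable; I expect the construction to split into several sub-cases according to how many of the designated elements are tight and how the leaves are distributed along $\mathcal{P}$.
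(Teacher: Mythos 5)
There is a genuine gap in your plan for the decisive case $m=5$: you assume that the $m$ candidate elements can be found along a single total dense path, but in a caterpillar with $m_t(T)\leq 5$ the total dense elements do not lie along a path --- they cluster as a \emph{star}. Since a total b-chromatic element in an $m$-colouring must see $m-1$ distinct colours, it must have total degree at least $m-1$, so only total dense elements can serve as candidates, and you need $m$ of them on your backbone. Consider $K_{1,4}$, or a vertex $u$ of degree $4$ with three pendant leaves and one neighbour $v$ of degree $2$: in both cases $m_t(T)=5$ and the total dense elements are $u$ together with its four incident edges (plus possibly $v$). Any path through $u$ contains at most two of those four edges, so every total dense path in such a caterpillar carries at most four total dense elements, never five. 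The same happens when $d(u)=3$ and $u$ has a leaf neighbour: the five dense elements are $u$, its three incident edges and one dense neighbour $u_1$, and again at most four of them lie on a common path. So the dense-path machinery of Definition \ref{definition:total-dense-path} --- which the paper deploys only for $m_t(T)\geq 6$, where the relevant theorems assume the dense elements sit on the central path --- structurally cannot supply the required $m$ candidates when $m\leq 5$, and no amount of bookkeeping about tight elements repairs this.

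The paper's proof is anchored differently: after reducing to the existence of a vertex $u$ with $d(u)\in\{3,4\}$ (otherwise $T$ is a path and Proposition \ref{proposition:total-b-chromatic-colouring-paths} applies), it takes the star $E(u)\cup\{u\}$ as the core set of b-chromatic candidates, colouring $u$ with colour $1$ and its incident edges with colours $2,\dots,d(u)+1$; when $d(u)=3$ it promotes one additional dense vertex $u_1$ (a neighbour of $u$ that is also adjacent to some other vertex $v$) to be the fifth b-chromatic element, and the remaining elements are coloured greedily, with sub-cases governed by how many degree-$3$ vertices exist and how they are attached. Your outer case split on the value of $m$ is harmless --- indeed for $m\leq 4$ the only connected caterpillars are the short paths $P_1,\dots,P_4$ and $K_{1,3}$, all covered by Propositions \ref{proposition:total-b-chromatic-colouring-paths} and \ref{proposition:total-b-chromatic-colouring-stars} --- but for $m=5$ you must replace the path backbone with this star-centred construction.
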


We will now characterise caterpillars with total $m$-degree $m_t(T)\geq6$ that do not admit a total b-chromatic $m_t(T)$-colouring.

\begin{definition}\label{definition:total-pivoted-caterpillar}
    Let $T$ be a caterpillar with central path $\mathcal{P}$ and total $m$-degree $m_t(T)\geq 6$ and $m$ total dense elements, where $m=m_t(T)$. Then, $T$ is a \textit{total pivoted} if either one of the following conditions holds:
    \begin{enumerate}
        \item there exists a total dense vertex $u$ with degree $d(u)=m-2$ such that $u'\in N(u)$ with $d(u')=2$ is adjacent to a total dense vertex $v\neq u$.
        \item there exist three paths $P_1,P_2,Q\subseteq \mathcal{P}$ such that (i) $P_1$ and $P_2$ are the only total dense paths of $T$, (ii) each of $P_1$ and $P_2$ has exactly three total dense elements, (iii) $Q$ is a path with $\ell(Q)\leq 1$ and no total dense element, and (iv) $Q$ is adjacent to both $P_1$ and $P_2$.
    \end{enumerate}
    We refer to $T$ as a total pivoted caterpillar of type 1 (resp.\ 2) if condition 1 (resp.\ 2) above holds.\qed
\end{definition}

Examples of total pivoted caterpillars of type 1 and 2 are presented in Figures \ref{fig:total-pivoted-caterpillar-type-1} and \ref{fig:total-pivoted-caterpillar-type-2} in Appendix \ref{appendix:section:figures}. The next two results show that total pivoted caterpillars only admit a total b-chromatic $(m_t(T)-1)$-colouring.

\begin{lemma}\label{lemma:total-pivoted-caterpillar-less-than-m}
     Let $T$ be a total pivoted caterpillar. Then, $\varphi_t(T)<m_t(T)$.
\end{lemma}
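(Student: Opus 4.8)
The plan is to argue by contradiction. By Lemma~\ref{lemma:total-m-degree} we always have $\varphi_t(T)\le m_t(T)$, so writing $m=m_t(T)$ it suffices to rule out equality; assume therefore that $T$ admits a total b-chromatic $m$-colouring $\mathbf{c}$. The first and most important step is a rigidity observation that serves both types. Any total b-chromatic element of colour $i$ picks up the $m-1$ colours of $[m]\setminus\{i\}$, so it has total degree at least $m-1$ and is total dense. A total b-chromatic $m$-colouring requires, for each of the $m$ colours, a total b-chromatic element of that colour, and these are pairwise distinct; hence $T$ needs at least $m$ total dense elements. Since a total pivoted caterpillar has \emph{exactly} $m$ of them, \emph{every} total dense element is total b-chromatic and the map sending each total dense element to its colour is a bijection onto $[m]$. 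In particular the total dense elements carry pairwise distinct colours.

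For a type 1 caterpillar I would first use the high degree of $u$ to pin down all total dense elements. As $d(u)=m-2$, every edge $(u,w)$ has total degree $(m-2)+d(w)\ge m-1$ and is total dense; together with $u$ and $v$ this already yields $1+(m-2)+1=m$ distinct total dense elements, so these are all of them. By the rigidity observation the $m-2$ edges incident to $u$ receive exactly the colours $[m]\setminus\{\mathbf{c}(u),\mathbf{c}(v)\}$. Now examine the edge $e_1=(u,u')$, which is total dense and hence total b-chromatic. Its total neighbourhood is $\{u,u'\}\cup(E(u)\setminus\{e_1\})$ together with, since $d(u')=2$, the single further edge $e_2=(u',v)$. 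None of these has colour $\mathbf{c}(v)$: the edges at $u$ avoid it by the previous sentence, $\mathbf{c}(u)\ne\mathbf{c}(v)$, while $u'$ is adjacent to $v$ and $e_2$ is incident to $v$. Thus $e_1$ never picks up $\mathbf{c}(v)\ne\mathbf{c}(e_1)$, contradicting that $e_1$ is total b-chromatic.

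For a type 2 caterpillar I would first extract the structure forced by the definition: using that $P_1,P_2$ are the only total dense paths, each with exactly three total dense elements, and that the boundary vertex shared with $Q$ cannot be total dense, one shows $m=6$ and that $P_1=\langle a_1,x,a_2\rangle$ and $P_2=\langle b_1,y,b_2\rangle$ are three-vertex paths whose centres $x,y$ have degree $3$ and whose boundary vertices have degree $2$, the six total dense elements being $x,(a_1,x),(x,a_2)$ and $y,(b_1,y),(y,b_2)$. Write $\beta=(x,a_2)$ and $\gamma=(b_1,y)$ for the boundary edges facing $Q$; both are total tight ($d_t=m-1=5$), hence total b-chromatic by rigidity, and both contain in their total neighbourhood the unique ``bottleneck'' element $\sigma$ of $Q$ (the middle edge if $\ell(Q)=1$, the middle vertex if $\ell(Q)=0$). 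Since $\beta$ is tight its five total neighbours carry pairwise distinct colours; two of them, namely $x$ and the boundary edge $(a_1,x)$, already carry the two colours of $P_1$ other than $\mathbf{c}(\beta)$, so the remaining neighbours, $\sigma$ among them, must carry the three colours of $P_2$; thus $\mathbf{c}(\sigma)$ is a colour of $P_2$. The symmetric argument at $\gamma$ forces $\mathbf{c}(\sigma)$ to be a colour of $P_1$. As the colour sets of $P_1$ and $P_2$ are disjoint, this is the desired contradiction.

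I expect the main obstacle to be the type 2 structure extraction: turning the hypotheses ``$P_1,P_2$ are the only total dense paths'' and ``$Q$ has no total dense element'' into the concrete degree sequence and the equality $m=6$, while correctly handling the two cases $\ell(Q)\in\{0,1\}$ and the bookkeeping of which edges (in particular pendant edges at $x,y$) are total dense. Once the rigidity observation and this local structure are established, both contradictions reduce to the single clean idea that a colour that must be picked up simply cannot cross the thin bottleneck separating the two dense regions.
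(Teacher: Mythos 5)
Your proposal is correct and follows essentially the same route as the paper's proof: the rigidity observation (every total dense element must be total b-chromatic with pairwise distinct colours, since there are exactly $m_t(T)$ of them) is used implicitly in the paper's w.l.o.g.\ colour assignments, your type 1 argument is the paper's argument that the edge $(u,u')$ can never pick up $\mathbf{c}(v)$, and your type 2 argument (the bottleneck element of $Q$ would need a colour lying in both $P_1$'s and $P_2$'s disjoint colour sets, via tightness of the two boundary edges) is just the complementary phrasing of the paper's conclusion that this element cannot be assigned any colour at all. The structural facts you defer (namely $m=6$, $\ell(P_1)=\ell(P_2)=2$, and the degree sequence) are exactly what the paper establishes in its Propositions~\ref{proposition:total-pivoted-tree-type-1-properties} and~\ref{proposition:total-pivoted-tree-type-2-properties}.
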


\begin{theorem}\label{theorem:total-pivoted-caterpillar-m-1}
    Let $T$ be a total pivoted caterpillar. Then, $T$ admits a total b-chromatic $(m_t(T)-1)$-colouring.  Hence $\varphi_t(T)=m_t(T)-1$.
\end{theorem}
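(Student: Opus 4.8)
The plan is to invoke Lemma~\ref{lemma:total-pivoted-caterpillar-less-than-m}, which already yields $\varphi_t(T)\le m_t(T)-1$; it therefore remains only to establish the matching lower bound by exhibiting a total b-chromatic $(m-1)$-colouring, where $m=m_t(T)$. The guiding observation is a counting contrast between $m$ and $m-1$ colours. With $m$ colours every total b-chromatic element would need total degree at least $m-1$, hence would have to be one of the exactly $m$ total dense elements; since $m$ such elements must then realise $m$ distinct colours, every one of them would have to be b-chromatic and carry a distinct colour---precisely what the pivot structure rules out, and what Lemma~\ref{lemma:total-pivoted-caterpillar-less-than-m} exploits. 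With $m-1$ colours, by contrast, a total b-chromatic element needs total degree only $m-2$, so all $m$ dense elements remain candidates, only $m-1$ of them need be b-chromatic, and they need not carry distinct colours. This slack---one surplus candidate together with the freedom to repeat a colour---is what I would exploit, splitting into the two types of Definition~\ref{definition:total-pivoted-caterpillar}.

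For type~1, let $u$ be the total dense vertex with $d(u)=m-2$, let $u'\in N(u)$ be its degree-2 neighbour, and let $v\neq u$ be the total dense vertex adjacent to $u'$, so that $\langle u,u',v\rangle$ is a subpath of $\mathcal{P}$. The bottleneck is $u'$, which is itself far from dense ($d_t(u')=4<m-1$) yet forces the palettes around $u$ and around $v$ to interfere. I would first fix a proper colouring of the spine elements $u,(u,u'),u',(u',v),v$, and then at each dense element designated to be b-chromatic assign the colours of its pendant leaf edges and leaf vertices so as to supply exactly the $m-2$ colours other than its own; this is possible because a dense element has at least $m-1$ total neighbours, enough of which are pendant and hence unconstrained. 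Since only $m-1$ of the $m$ dense elements must be made b-chromatic, I can leave the single dense element pinched by $u'$ out of the b-chromatic set, or let it repeat an already-covered colour, which dissolves the obstruction. The remaining low-degree elements are then coloured greedily---this always succeeds since $m-1$ dominates every relevant local constraint---and one checks that all $m-1$ colours indeed occur among the b-chromatic elements.

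For type~2 we have $m=6$: two total dense paths $P_1,P_2$, each carrying exactly three total dense elements, linked by a path $Q$ with $\ell(Q)\le 1$ carrying no dense element. Here I would colour $P_1$ and $P_2$ so that their six dense elements jointly realise all five colours as b-chromatic, with $Q$ serving only as a buffer. The delicate point is the coupling across $Q$: the colour of an endpoint element of $P_1$ abutting $Q$ interacts with that of the corresponding endpoint of $P_2$, and with six dense elements but only five colours to cover we have exactly one repetition to spend, which I would place at $Q$ so as to decouple the two sides. Pendant leaf elements are then used to complete each chosen element's palette to $[5]$ minus its own colour, and all other elements are coloured greedily.

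The step I expect to be the main obstacle is the simultaneous satisfaction, at the pivot, of three requirements: keeping the total colouring proper across the low-degree connector ($u'$ in type~1, $Q$ in type~2); ensuring that the pendant assignments realising b-chromaticity on the two sides do not clash through the connector; and guaranteeing that the b-chromatic elements collectively attain all $m-1$ colours rather than leaving a gap. This is exactly where dropping from $m$ to $m-1$ colours buys just enough room, so the argument must make that trade-off explicit rather than appeal to a generic greedy extension.
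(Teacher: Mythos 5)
Your overall strategy coincides with the paper's: the upper bound is Lemma~\ref{lemma:total-pivoted-caterpillar-less-than-m}, and the lower bound is a constructive $(m-1)$-colouring split according to the two types of Definition~\ref{definition:total-pivoted-caterpillar}, exploiting exactly the slack you identify ($m$ dense candidates, only $m-1$ colours to certify, and one permitted repetition). For type~1 your plan is workable but heavier than necessary. The paper simply colours the star $E(u)\cup\{u\}$ with the colours $1,\dots,m-1$; since these $m-1$ elements are pairwise adjacent, each edge $(u,u_i)$ already sees $u$ and the other $m-3$ edges, i.e.\ $m-2$ distinct colours, so \emph{every} element of the star is total b-chromatic at once. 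No pendant-palette completion is needed, and the dense element omitted from the certifying set is $v$, not the pinched edge $(u,u')$. Your alternative (dropping $(u,u')$ and certifying $v$ instead) can be made to work, but note it is forced: the star must still be rainbow, so the certifying set misses exactly the colour $\mathbf{c}(u,u')$, hence $v$ must receive that colour and be made b-chromatic by hand.

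The concrete gap is in type~2. Placing the repetition ``at $Q$'' is impossible in the subcase $\ell(Q)=0$: there $Q$ is a single vertex $v$, and the two dense edges abutting $Q$, namely $(w_2,v)$ and $(v,w_3)$, share the endpoint $v$, so they are adjacent and cannot receive the same colour. (This adjacency is precisely what makes $T$ pivoted; it is the engine of Case~2.a in the proof of Lemma~\ref{lemma:total-pivoted-caterpillar-less-than-m}, where $v$ itself ends up uncolourable under a $6$-colouring.) The repetition must instead be spent on a \emph{non-adjacent} pair of dense elements, one on each side of $Q$; the paper uses the two outermost boundary edges, giving both $(w_1,w_2)$ and $(w_3,w_4)$ colour $1$, and then completes the palettes of the five certified elements using pendant neighbours --- a step that needs care because the certified dense edges are total tight (total degree exactly $5$ when $m=6$), so each tolerates exactly one repeated colour in its total neighbourhood. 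With the placement of the repetition corrected, your plan goes through; as literally stated, it fails for $\ell(Q)=0$.
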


Now, it remains to deal with caterpillars that are not total pivoted. First, observe that that the family of total pivoted caterpillars of type 1 and 2 are mutually exclusive.

\begin{lemma}\label{lemma:total-pivoted-catepillar-type-1-2-mutually-exclusive}
    Let $T$ be a total pivoted caterpillar. Then, $T$ is either a total pivoted caterpillar of type 1 or a total pivoted caterpillar of type 2 but not both.
\end{lemma}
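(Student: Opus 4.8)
The plan is to argue by contradiction: assume $T$ is total pivoted of both type 1 and type 2 simultaneously, and exhibit a total dense element whose existence is incompatible with the type 2 structure. The ``either/or'' half of the statement is immediate, since by Definition~\ref{definition:total-pivoted-caterpillar} a total pivoted caterpillar satisfies condition 1 or condition 2 by hypothesis; so the whole content is the ``not both'' assertion.

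First I would extract the key object from the type 1 hypothesis: a total dense vertex $u$ with $d(u)=m-2$, where $m=m_t(T)\geq 6$. Since $d(u)=m-2\geq 4\geq 2$ and every non-leaf vertex of a caterpillar lies on its central path $\mathcal{P}$, the vertex $u$ lies on $\mathcal{P}$ and hence has at most two neighbours on $\mathcal{P}$; the remaining $\geq m-4\geq 2$ neighbours are leaves. I would then observe that every edge incident to $u$ is total dense: for any neighbour $w$ we have $d_t((u,w))=d(u)+d(w)=(m-2)+d(w)\geq m-1$ because $d(w)\geq 1$. In particular each of the $\geq 2$ pendant edges $(u,\ell)$ to a leaf $\ell$ is total dense, with $d_t((u,\ell))=(m-2)+1=m-1$.

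Next I would use the type 2 hypothesis to show these pendant edges cannot exist. The crucial step is to read condition (i) of the type 2 definition --- that $P_1$ and $P_2$ are the only total dense paths --- as forcing every total dense edge to lie on $P_1$ or $P_2$, and hence on $\mathcal{P}$. Indeed, if $e=(a,b)$ is any total dense edge, then the two-vertex path $\langle a,b\rangle$ is a total dense path in the sense of Definition~\ref{definition:total-dense-path} (its single edge is total dense and it has no interior vertices), so it must be contained in $P_1$ or $P_2$, both of which are subpaths of $\mathcal{P}$. But a pendant edge $(u,\ell)$ is incident to a leaf $\ell\notin\mathcal{P}$, so it is not an edge of $\mathcal{P}$. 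This contradicts the existence of the total dense pendant edges produced from the type 1 hypothesis, completing the argument. Alternatively, one can phrase the contradiction quantitatively: $u$ has $m-2\geq 4$ total dense incident edges, yet a vertex lying interior to a single subpath of $\mathcal{P}$ is incident to at most two edges of $P_1\cup P_2$; here one checks that $u$ cannot be a vertex shared by $P_1$ and $P_2$, since such a vertex lies on $Q$ and is therefore not total dense, whereas $u$ is.

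I expect the main obstacle to be the bookkeeping around condition (i): one must pin down precisely that ``$P_1$ and $P_2$ are the only total dense paths'' rules out total dense elements lying off the central path, and, in the quantitative phrasing, that the total dense vertex $u$ is genuinely interior to exactly one of $P_1,P_2$ rather than sitting at a shared endpoint. This rests on the facts that single vertices and single edges are themselves total dense paths, and that the connecting path $Q$ contains no total dense element, so neither its vertices nor its edges --- in particular any vertex shared between $P_1$ and $P_2$ --- can be total dense. Once these observations are in place, the count $d(u)=m-2$ against the at-most-two total dense path-edges available at any admissible vertex yields the contradiction directly.
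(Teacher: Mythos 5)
Your proposal is correct and follows essentially the same route as the paper's proof: assume both types hold, take the type-1 vertex $u$ with $d(u)=m-2\geq 4$, observe that every edge in $E(u)$ is itself a total dense path, and contradict condition (i) of type 2 that $P_1$ and $P_2$ are the only total dense paths. Your write-up is in fact more careful than the paper's sketch, since you explicitly justify why the pendant edges at $u$ cannot lie in $P_1\cup P_2$ (they leave the central path) and why $u$ cannot be the vertex shared via $Q$, details the paper leaves implicit.
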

\begin{proof}[Proof sketch]
    Suppose that $T$ is a total pivoted caterpillar of type 1 and 2. Let $P_1$ and $P_2$ be the two total dense paths of $T$. Next, let $u\in V(T)$ be a vertex of $T$ with $d(u)= m-2\geq 4$. Observe now that every edge in $E(u)$ is a total dense path. It follows that $T$ has at least three total dense paths, a contradiction.
\end{proof}

By Lemma \ref{lemma:total-pivoted-catepillar-type-1-2-mutually-exclusive}, if $T$ has a total dense element outside the central path $\mathcal{P}$, then $T$ cannot be a total pivoted caterpillar of type 2. Furthermore, if $T$ has no total dense element outside the central path $\mathcal{P}$, then $T$ cannot be a total pivoted caterpillar of type 1. We will present two algorithms for computing the total b-chromatic number of a non-total pivoted caterpillar according to whether or not there exists total dense elements outside the central path. The first algorithm is given in the proof of the following theorem.

\begin{theorem}\label{theorem:non-total-pivoted-caterpillar-total-element-outside-central-path}
    Let $T$ be a caterpillar with central path $\mathcal{P}$ and $m_t(T)\geq 6$ such that $T$ is not total pivoted. Assume that there exists a total dense element outside $\mathcal{P}$. Then, $\varphi_t(T)=m_t(T)$.
\end{theorem}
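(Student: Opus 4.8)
The plan is to keep the upper bound $\varphi_t(T)\le m_t(T)$ for free from Lemma~\ref{lemma:total-m-degree} and to \emph{construct} a total b-chromatic $m$-colouring, where $m:=m_t(T)$. First I would locate a hub vertex. Since $m\ge 6$, no leaf can be total dense (a leaf has total degree $2<m-1$), so a total dense element outside $\mathcal P$ must be a pendant edge $(w,u)$ with $u$ a leaf; total density then forces $d_t((w,u))=d(w)+1\ge m-1$, i.e.\ $d(w)\ge m-2$, with $w\in\mathcal P$. Two elementary observations pin down the situation: (i) $m_t(T)\ge \Delta(T)+1$, because the maximum-degree vertex together with its $\Delta$ incident edges gives $\Delta+1$ elements each of total degree at least $\Delta+1$; hence $d(w)\le\Delta\le m-1$, and so $d(w)\in\{m-2,m-1\}$; and (ii) every edge incident to $w$ has total degree at least $d(w)+1\ge m-1$ and is therefore total dense, as is $w$ itself. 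Finally, by Lemma~\ref{lemma:total-pivoted-catepillar-type-1-2-mutually-exclusive} the hypothesis of a total dense element outside $\mathcal P$ rules out a type-2 pivot, and ``$T$ is not total pivoted'' additionally rules out type~1.

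Next I would colour the hub. Assign $w$ the colour $m$ and give its incident edges distinct colours from $\{1,\dots,d(w)\}$. In the case $d(w)=m-1$ this uses all $m$ colours locally: $w$ picks up $\{1,\dots,m-1\}$ and is b-chromatic for $m$, while an edge of colour $i$ sees $w$ (colour $m$) together with its $m-2$ sibling edges, hence all of $[m]\setminus\{i\}$, and is b-chromatic. Every colour obtains a witness at the hub and only the extension step remains.

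In the harder case $d(w)=m-2$ the hub realizes only $m-1$ colours. Colouring the edges $1,\dots,m-2$, the vertex $w$ and each edge $e_i$ are short of colour $m-1$; I would repair both by colouring suitable neighbours of $w$ with $m-1$ (this is proper, since the neighbours of $w$ are pairwise non-adjacent in a caterpillar and the incident edges avoid $m-1$), making $w$ and all $e_i$ b-chromatic and thus realizing colours $1,\dots,m-2,m$. What remains is a b-chromatic witness of colour $m-1$, and here counting is decisive: $m_t(T)=m$ forces at least $m$ total dense elements, whereas the hub accounts for only $m-1$ of them (namely $w$ and its $m-2$ edges), so a further total dense element $z$ exists. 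I would colour $z$ with $m-1$ and make it b-chromatic. This is precisely the step that uses ``not type~1'': the only way $z$ could be locked into an unavoidable clash with the hub is the pivot configuration of condition~1 in Definition~\ref{definition:total-pivoted-caterpillar} (a degree-$2$ neighbour of the degree-$(m-2)$ total dense vertex $w$ linking to a second total dense vertex), which is excluded by hypothesis.

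Finally I would extend the partial colouring to a proper total $m$-colouring of all of $T$. This is routine: $T$ is a tree and $m=m_t(T)\ge\Delta(T)+1\ge\chi_t(T)$, so processing the central path outward and colouring each path vertex together with its pendant fan leaves a free colour at every element when it is reached (the fan at a vertex $v$ needs only $d(v)+1\le m$ distinct colours). The main obstacle is the deficient case $d(w)=m-2$: realizing the colour-$(m-1)$ witness $z$ while keeping the colouring proper and globally extendable, together with the verification that the excluded type-1 pivot is exactly the obstruction that would otherwise force $\varphi_t(T)<m$.
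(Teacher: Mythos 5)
Your overall plan coincides with the paper's: locate the pendant hub $w$ with $d(w)\in\{m-2,m-1\}$, settle $d(w)=m-1$ entirely at the hub, and in the deficient case $d(w)=m-2$ use the count of total dense elements to obtain one further dense witness, then extend greedily. The hub identification, the $d(w)=m-1$ case, and the extension step are all sound and agree with the paper. The gap is in the deficient case, at exactly the point you flag as ``the main obstacle''. You claim that the clash configuration --- a degree-$2$ neighbour $u_1$ of the hub adjacent to the second dense vertex $z$ --- ``is excluded by hypothesis'' because $T$ is not total pivoted of type~1. This is false. Definition~\ref{definition:total-pivoted-caterpillar} requires, as part of being total pivoted, that $T$ has \emph{exactly} $m$ total dense elements. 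A caterpillar can contain precisely this configuration and still fail to be total pivoted because it has \emph{more} than $m$ dense elements. Concretely: take two vertices $u$ and $v$, each of degree $m-2$ with $m-3$ pendant leaves, joined by a common degree-$2$ neighbour $u_1$. One checks that $m_t(T)=m$, that $T$ has $2m-2>m$ total dense elements (so it is not total pivoted), and that it has dense pendant edges outside $\mathcal{P}$, so it satisfies every hypothesis of the theorem --- yet the pivot-type configuration is present. In your colouring (hub edges coloured $1,\dots,m-2$, $z=v$ coloured $m-1$), the hub edge $(w,u_1)$ can never pick up colour $m-1$: neither $u_1$ nor $(u_1,v)$ may receive it, since both are adjacent or incident to $v$. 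So that edge cannot witness its own colour, and your construction stalls. (Note also that the clash locks a \emph{hub edge}, not $z$ itself, so your description of what goes wrong is misdirected.)

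What the hypothesis ``not total pivoted'' actually buys is the opposite of exclusion: if the configuration occurs in a non-pivoted caterpillar, then $T$ must have strictly more than $m$ dense elements, and these surplus dense elements are what rescue the colouring. This is the content of Case 2.b (sub-case $d(u_1)=2$) in the paper's proof: it keeps $v$ as the witness for the missing colour, takes a \emph{further} dense element guaranteed by non-pivotedness, and reassigns witness roles so that this new element --- rather than the locked hub edge --- witnesses the locked edge's colour, with a further case analysis on where the new element sits (adjacent to the hub, adjacent to $v$, or neither), alongside the cases where $v$ is adjacent to the hub or far from it. Your proposal contains no mechanism for reassigning witnesses or for choosing the extra witness among dense \emph{edges} as well as vertices, so it cannot handle precisely the situation that the non-pivotedness hypothesis is there to navigate; this is the missing core of the argument rather than a routine detail.
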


\begin{algorithm}[t!]
    \caption{Algorithm for colouring $T[P]$, where $P=\langle w_1,w_2,\dots, w_k \rangle$ and $k\geq 3$, using $2k-1$ colours}\label{algorithm:total-colouring-total-dense-path-type-1}
    \begin{algorithmic}[1]
        \State $\mathbf{c}(w_i)\gets 2i-1$, for $i\in [k]$\label{alg1:line:1-colouring-vertices}
        \State $\mathbf{c}(e_i)\gets 2i$, for $i\in [k-1]$\label{alg1:line:2-colouring-edges}\Comment{$e_i=(w_i,w_{i+1})$ for $i\in [k-1]$}
        \For{$j=1,\dots k-2$}\label{alg1:line:3-for-loop-to-colour-N_1E_1-N_kE_k}\Comment{$w_i^j$ are the neighbours of $w_i$ outside $P$, where $i\in\{1,k\}$ and $j\in [k-2]$}
            \If{$j\mod 2=1$}\label{alg1:line:4-if-j-is-odd}
                \State $\mathbf{c}\left(w_1^j\right)\gets 2j+2$ and $\mathbf{c}\left(e_1^j\right)\gets 2j+3$\label{alg1:line:5-colouring-w_1^j-e_1^j-odd}\Comment{$w_1$ is picking up $\{4,5,8,9,\dots\}$}
                \State $\mathbf{c}\left(w_k^{k-j-1}\right)\gets 2k-2j-2$ and $\mathbf{c}\left(e_k^{k-j-1}\right)\gets 2k-2j-3$\label{alg1:line:6-colouring-w_k^j-e_k^j-odd}\Comment{$w_k$ is picking up $\{2k-4,2k-5,\dots\}$}
            \Else\label{alg1:line:7}
                \State $\mathbf{c}\left(w_1^j\right)\gets 2j+3$ and $\mathbf{c}\left(e_1^j\right)\gets 2j+2$\label{alg1:line:8-colouring-w_1^j-e_1^j-even}\Comment{$w_1$ is picking up $\{6,7,10,11,\dots\}$}
                \State $\mathbf{c}\left(w_k^{k-j-1}\right)\gets 2k-2j-3$ and $\mathbf{c}\left(e_k^{k-j-1}\right)\gets 2k-2j-2$\label{alg1:line:9-colouring-w_k^j-e_k^j-even}\Comment{$w_k$ is picking up $\{2k-6,2k-7,\dots\}$}
            \EndIf\label{alg1:line:10}
        \EndFor\label{alg1:line:11}
        \For{$i=2,\dots,k-1$}\label{alg1:line:12-for-loop-to-colour-N_iE_i}\Comment{$w_i^j$ are the neighbours of $w_i$ outside $P$, where $i=2,\dots,k-1$ and}
        \State \Comment{$j\in[k-3]$. An illustration of this \emph{for-loop} can be observed in Figure \ref{fig:total-b-chromatic-colouring-caterpillar-path-P'}}
            \For{$j=1,\dots,i-2$}\label{alg1:line:13-for-loo-to-colour-w_i^j-e_i^j-up-to-i-2}
                    \If{$j\mod 2=1$}\label{alg1:line:14-if-j-odd}
                        \State $\mathbf{c}\left(w_i^{i-j-1}\right)\gets 2i-2j-2$ and $\mathbf{c}\left(e_i^{i-j-1}\right)\gets 2i-2j-3$\label{alg1:line:15-colouring-w_i^j-e_i^j-odd}\Comment{$w_i$ is picking up $\{2i-4,2i-5,\dots\}$}
                    \Else\label{alg1:line:16-if-j-even}
                        \State $\mathbf{c}\left(w_i^{i-j-1}\right)\gets 2i-2j-3$ and $\mathbf{c}\left(e_i^{i-j-1}\right)\gets 2i-2j-2$\label{alg1:line:17-colouring-w_i^j-e_i^j-even}\Comment{$w_i$ is picking up $\{2i-6,2i-7,\dots\}$}
                    \EndIf\label{alg1:line:18}
            \EndFor\label{alg1:line:19}
            \For{$j=1,\dots,k-i-1$}\label{alg1:line:20-for-loo-to-colour-w_i^j-e_i^j-from-to-i-1}
                    \If{$j\mod 2=1$}\label{alg1:line:21-if-j-is-odd}
                        \State $\mathbf{c}\left(w_i^{i+j-2}\right)\gets 2i+2j$ and $\mathbf{c}\left(e_i^{i+j-2}\right)\gets 2i+2j+1$\label{alg1:line:22-colouring-w_i^j-e_i^j-odd}\Comment{$w_i$ is picking up $\{2i+2,2i+3,\dots\}$}
                    \Else\label{alg1:line:23}
                        \State $\mathbf{c}\left(w_i^{i+j-2}\right)\gets 2i+2j+1$ and $\mathbf{c}\left(e_i^{i+j-2}\right)\gets 2i+2j$\label{alg1:line:24-colouring-w_i^j-e_i^j-even}\Comment{$w_i$ is picking up $\{2i+4,2i+5,\dots\}$}
                    \EndIf\label{alg1:line:25}
            \EndFor\label{alg1:line:26}
        \EndFor\label{alg1:line:27}
    \end{algorithmic}
\end{algorithm}

Suppose now that $T$ has every total dense element within the central path. Observe that $T$ may have more than one total dense path. We will now present an algorithm that computes a total b-chromatic colouring using the maximum number of colours whenever $T$ has exactly one total dense path. This algorithm can be used to compute a total b-chromatic colouring when $T$ has more than one total dense path provided that $T$ is not total pivoted of type 2. Now, let $P\subseteq \mathcal{P}$ be the only total dense path of $T$ and assume that $P=\langle w_1,\dots,w_k\rangle$. Observe that $T$ is not total pivoted. Assume that $P$ has $q$ total dense elements and that $m=m_t(T)\geq 6$. We remark that the algorithms presented in this section work for any $q\leq m$, but to obtain the main result of this section we will assume that $q=m$. Since $P$ contains all the total dense elements of $T$, it follows that $k\geq 4$. We will first show that it is possible to assign $q$ different colours to the total dense elements of $T[P]$ such that each total dense element of $T[P]$ picks up $q-1$ different colours, and no total tight element in $P$ repeats a colour. Recall that $P$ can be a total dense path of type 1, 2 or 3. Assume first that $P$ is a total dense path of type 1. It follows that $q=2k-1$. Algorithm \ref{algorithm:total-colouring-total-dense-path-type-1} constructs a partial total colouring of $T$ by assigning colours to $T[P]$. A pictorial representation of the colouring produced by Algorithm \ref{algorithm:total-colouring-total-dense-path-type-1} is given in Figure \ref{fig:total-b-chromatic-colouring-caterpillar-path-P'}.
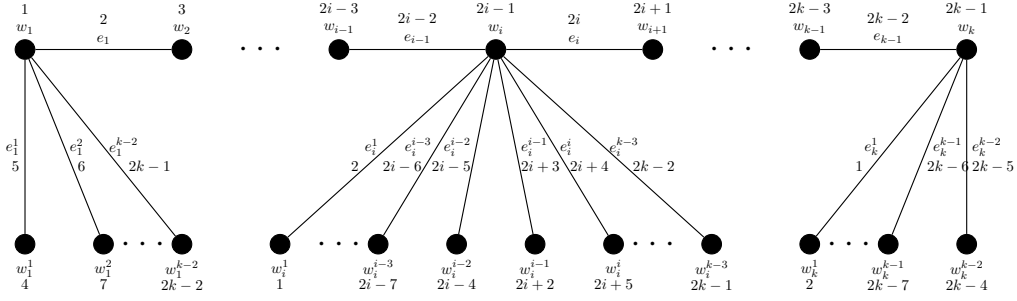
\begin{figure}
    \centering
    \resizebox{.85\linewidth}{!}{
        \begin{tikzpicture}[thick]
            \node[draw, circle, fill=black, minimum size=0.5cm, inner sep=0pt, label=above:{$w_1$}, label={[yshift=0.5cm]above:{1}}] (w1) at (0,0) {};
            \node[draw, circle, fill=black, minimum size=0.5cm, inner sep=0pt, label=above:{$w_2$}, label={[yshift=0.5cm]above:{$3$}}] (w2) at (4,0) {};
            \node[draw, circle, fill=black, minimum size=0.5cm, inner sep=0pt, label=above:{$w_{i-1}$}, label={[yshift=0.5cm]above:{$2i-3$}}] (wi_minus_1) at (8,0) {};
            \node[draw, circle, fill=black, minimum size=0.5cm, inner sep=0pt, label=above:{$w_i$}, label={[yshift=0.5cm]above:{$2i-1$}}] (wi) at (12,0) {};
            \node[draw, circle, fill=black, minimum size=0.5cm, inner sep=0pt, label=above:{$w_{i+1}$}, label={[yshift=0.5cm]above:{$2i+1$}}] (wi_plus_1) at (16,0) {};
            \node[draw, circle, fill=black, minimum size=0.5cm, inner sep=0pt, label=above:{$w_{k-1}$}, label={[yshift=0.5cm]above:{$2k-3$}}] (wk_minus_1) at (20,0) {};
            \node[draw, circle, fill=black, minimum size=0.5cm, inner sep=0pt, label=above:{$w_k$}, label={[yshift=0.5cm]above:{$2k-1$}}] (wk) at (24,0) {};
        
            \node[draw, circle, fill=black, minimum size=0.5cm, inner sep=0pt, label=below:{$w_1^1$}, label={[yshift=-0.5cm]below:{$4$}}] (w1_1) at (0,-5) {};
            \node[draw, circle, fill=black, minimum size=0.5cm, inner sep=0pt, label=below:{$w_1^2$}, label={[yshift=-0.5cm]below:{$7$}}] (w1_2) at (2,-5) {};
            \node[draw, circle, fill=black, minimum size=0.5cm, inner sep=0pt, label=below:{$w_1^{k-2}$}, label={[yshift=-0.5cm]below:{$2k-2$}}] (w1_k_minus_2) at (4,-5) {};
        
            \node[draw, circle, fill=black, minimum size=0.5cm, inner sep=0pt, label=below:{$w_i^1$}, label={[yshift=-0.5cm]below:{$1$}}] (wi_1) at (6.5,-5) {};
            \node[draw, circle, fill=black, minimum size=0.5cm, inner sep=0pt, label=below:{$w_i^{i-3}$}, label={[yshift=-0.5cm]below:{$2i-7$}}] (wi_i_minus_3) at (9,-5) {};
            \node[draw, circle, fill=black, minimum size=0.5cm, inner sep=0pt, label=below:{$w_i^{i-2}$}, label={[yshift=-0.5cm]below:{$2i-4$}}] (wi_i_minus_2) at (11,-5) {};
            \node[draw, circle, fill=black, minimum size=0.5cm, inner sep=0pt, label=below:{$w_i^{i-1}$}, label={[yshift=-0.5cm]below:{$2i+2$}}] (wi_i_minus_1) at (13,-5) {};
            \node[draw, circle, fill=black, minimum size=0.5cm, inner sep=0pt, label=below:{$w_i^{i}$}, label={[yshift=-0.5cm]below:{$2i+5$}}] (wi_i) at (15,-5) {};
            \node[draw, circle, fill=black, minimum size=0.5cm, inner sep=0pt, label=below:{$w_i^{k-3}$}, label={[yshift=-0.5cm]below:{$2k-1$}}] (wi_k_minus_3) at (17.5,-5) {};
        
            \node[draw, circle, fill=black, minimum size=0.5cm, inner sep=0pt, label=below:{$w_k^1$}, label={[yshift=-0.5cm]below:{$2$}}] (wk_1) at (20,-5) {};
            \node[draw, circle, fill=black, minimum size=0.5cm, inner sep=0pt, label=below:{$w_k^{k-1}$}, label={[yshift=-0.5cm]below:{$2k-7$}}] (wk_k_minus_1) at (22,-5) {};
            \node[draw, circle, fill=black, minimum size=0.5cm, inner sep=0pt, label=below:{$w_k^{k-2}$}, label={[yshift=-0.5cm]below:{$2k-4$}}] (wk_k_minus_2) at (24,-5) {};
        
            \draw[thick] (w1) -- (w2) node[midway, above, yshift=0.5cm] {$2$} node[midway, above] {$e_1$};
            \draw[thick] (wi_minus_1) -- (wi) node[midway, above, yshift=0.5cm] {$2i-2$} node[midway, above] {$e_{i-1}$};
            \draw[thick] (wi) -- (wi_plus_1) node[midway, above, yshift=0.5cm] {$2i$} node[midway, above] {$e_{i}$};
            \draw[thick] (wk_minus_1) -- (wk) node[midway, above, yshift=0.5cm] {$2k-2$} node[midway, above] {$e_{k-1}$};
        
            \draw[thick] (w1) -- (w1_1) node[midway, left, yshift=-0.5cm] {$5$} node[midway, left] {$e_1^1$};
            \draw[thick] (w1) -- (w1_2) node[midway, right, yshift=-0.5cm, xshift=.2cm] {$6$} node[midway, right] {$e_1^2$};
            \draw[thick] (w1) -- (w1_k_minus_2) node[midway, right, yshift=-0.5cm, xshift=.5cm] {$2k-1$} node[midway, right] {$e_1^{k-2}$};
            \draw[thick] (wi) -- (wi_1) node[midway, left, yshift=-0.5cm, xshift=-0.6cm] {$2$} node[midway, left, xshift=-.1cm] {$e_i^1$};
            \draw[thick] (wi) -- (wi_i_minus_3) node[midway, left, yshift=-0.5cm, xshift=-.25cm] {$2i-6$} node[midway, left] {$e_i^{i-3}$};
            \draw[thick] (wi) -- (wi_i_minus_2) node[midway, left, yshift=-0.5cm] {$2i-5$} node[midway, left] {$e_i^{i-2}$};
            \draw[thick] (wi) -- (wi_i_minus_1) node[midway, right, yshift=-0.5cm] {$2i+3$} node[midway, right] {$e_i^{i-1}$};
            \draw[thick] (wi) -- (wi_i) node[midway, right, yshift=-0.5cm, xshift=.25cm] {$2i+4$} node[midway, right] {$e_i^i$};
            \draw[thick] (wi) -- (wi_k_minus_3) node[midway, right, yshift=-0.5cm, xshift=0.6cm] {$2k-2$} node[midway, right] {$e_i^{k-3}$};
            \draw[thick] (wk) -- (wk_1) node[midway, left, yshift=-0.5cm, xshift=-.5cm] {$1$} node[midway, left, xshift=-.1cm] {$e_k^1$};
            \draw[thick] (wk) -- (wk_k_minus_1) node[midway, right, yshift=-0.5cm,xshift=-.15cm] {$2k-6$} node[midway, right] {$e_k^{k-1}$};
            \draw[thick] (wk) -- (wk_k_minus_2) node[midway, right, yshift=-0.5cm] {$2k-5$} node[midway, right] {$e_k^{k-2}$};
        
            \node at (6, 0) {\Huge$\cdot\cdot\cdot$};
        
            \node at (18, 0) {\Huge$\cdot\cdot\cdot$};
        
            \node at (3, -5) {\Huge$\cdot\cdot\cdot$};
            \node at (8, -5) {\Huge$\cdot\cdot\cdot$};
            \node at (16, -5) {\Huge$\cdot\cdot\cdot$};
            \node at (21, -5) {\Huge$\cdot\cdot\cdot$};
        
        \end{tikzpicture}
    }
    \caption{Note that since $k$ is assumed to be odd and $i$ is assumed to be even $\mathbf{c}(w_i^1)=1$ and $\mathbf{c}(w_i^{k-3})=2k-1$.}
    \label{fig:total-b-chromatic-colouring-caterpillar-path-P'}
\end{figure}
For the special case where $k=6$, the colouring produced by Algorithm \ref{algorithm:total-colouring-total-dense-path-type-1} is presented in Figure \ref{fig:total-b-chromatic-colouring-running-example-algorithm}, Appendix \ref{appendix:section:figures}. The following lemma proves that each total dense element in $P$ picks up $2k-2$ different colours. Moreover, it proves that no total dense element is repeating a colour.

\begin{lemma}\label{lemma:total-colouring-total-dense-subpath-type-1}
    Algorithm \ref{algorithm:total-colouring-total-dense-path-type-1} assigns exactly $2k-1$ colours to elements of $T[P]$ such that (i) the total dense elements in $P$ are given colours $1,2,\dots,2k-1$, and (ii) every total dense element in $P$ picks up $2k-2$ colours from the set $[2k-1]$.
\end{lemma}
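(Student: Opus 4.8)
The plan is to verify directly that the explicit assignment made by Algorithm~\ref{algorithm:total-colouring-total-dense-path-type-1} is a proper partial total colouring of $T[P]$ whose image is exactly $[2k-1]$, and then to compute, for each total dense element, the precise set of colours it picks up. Since $P$ is of type~1 with $q=m=2k-1$, every element of $P$ is total dense; lines~\ref{alg1:line:1-colouring-vertices}--\ref{alg1:line:2-colouring-edges} give the $k$ path vertices the odd colours $1,3,\dots,2k-1$ and the $k-1$ path edges the even colours $2,4,\dots,2k-2$. This is a bijection between the total dense elements of $P$ and $[2k-1]$, which is exactly (i) and which also yields surjectivity onto $[2k-1]$. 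It then remains to check that nothing exceeds $2k-1$ or drops below $1$: the extreme indices occur at the endpoints of the loops (e.g. $2j+3\le 2(k-2)+3=2k-1$ on the $w_1$ side, and $2i+2j+1\le 2k-1$ on the upward side), so the colouring uses exactly $[2k-1]$. Finally, for a total tight path vertex the loop bounds $k-2$ (boundary) and $k-3$ (internal, as $(i-2)+(k-i-1)=k-3$) equal the number of leaves outside $P$, so every element of $T[P]$ is coloured; a total dense but non-tight vertex already picks up all needed colours from the coloured leaves, so any surplus leaf may reuse a colour without affecting (i) or (ii).

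Next I would establish (ii) for the path vertices by accumulating the colours they pick up. For the boundary vertex $w_1$ (colour $1$) the path contributes $\{2,3\}$ from $e_1$ and $w_2$, while the first \emph{for}-loop gives, for each $j\in[k-2]$, the pair $\{2j+2,2j+3\}$ on $w_1^j$ and $e_1^j$ (the parity of $j$ only swaps which element gets which colour); the union over $j$ is $\{4,5,\dots,2k-1\}$, so $w_1$ picks up $[2k-1]\setminus\{1\}$, and $w_k$ is symmetric. For an internal vertex $w_i$ (colour $2i-1$), the downward inner loop (lines~\ref{alg1:line:13-for-loo-to-colour-w_i^j-e_i^j-up-to-i-2}--\ref{alg1:line:19}) contributes the pairs $\{2i-2j-2,2i-2j-3\}$ for $j\in[i-2]$, with union $\{1,\dots,2i-4\}$; the path contributes $\{2i-3,2i-2\}$ (from $w_{i-1},e_{i-1}$) and $\{2i,2i+1\}$ (from $e_i,w_{i+1}$); and the upward inner loop (lines~\ref{alg1:line:20-for-loo-to-colour-w_i^j-e_i^j-from-to-i-1}--\ref{alg1:line:26}) contributes $\{2i+2j,2i+2j+1\}$ for $j\in[k-i-1]$, with union $\{2i+2,\dots,2k-1\}$. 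Their total union is precisely $[2k-1]\setminus\{2i-1\}$, all colours but $w_i$'s own.

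The crux is (ii) for the path edges $e_i=(w_i,w_{i+1})$ (colour $2i$), because $N_t(e_i)$ contains the two endpoints and all \emph{other} edges incident to them, but \emph{not} the leaf vertices hanging off $w_i$ or $w_{i+1}$. Thus $e_i$ picks up $\{2i-1,2i+1\}$ from its endpoints, $\{2i-2,2i+2\}$ from $e_{i-1}$ and $e_{i+1}$, and the colours of the leaf edges of both $w_i$ and $w_{i+1}$. I would argue by residues modulo $4$: on the downward side the leaf edges of $w_i$ carry the colours $\equiv 2i-1,\,2i-2 \pmod 4$ lying below $2i$, while those of $w_{i+1}$ carry the two complementary residues $\equiv 2i-3,\,2i \pmod 4$, so together with $2i-2$ and $2i-1$ they tile the whole interval $\{1,\dots,2i-1\}$ (one checks the loops reach down to colours $1$ and $2$). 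The symmetric computation on the upward side, where the leaf edges of $w_i$ and $w_{i+1}$ reach up to $2k-1$, yields $\{2i+1,\dots,2k-1\}$; hence $e_i$ picks up $[2k-1]\setminus\{2i\}$. The boundary edges $e_1,e_{k-1}$ are treated separately, as one endpoint is a boundary vertex coloured by the first \emph{for}-loop (with $k-2$ rather than $k-3$ leaves), but the same residue bookkeeping applies.

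Putting this together, each total dense element picks up exactly $2k-2=q-1$ distinct colours, namely every colour except its own, which is (ii); and since a total tight element has precisely $2k-2$ total-neighbours, all of them must be distinctly coloured, so no total tight element repeats a colour and the assignment is a proper partial total colouring. I expect the edge computation of the third paragraph to be the main obstacle: the colours picked up by $e_i$ are split between two different leaf-edge families whose colours interleave by parity, so the delicate point is confirming that these families tile the required intervals without gaps, that the forbidden colour $2i$ is never contributed, and that the extreme colours $1,2,2k-2,2k-1$ are all attained while nothing leaves $[2k-1]$.
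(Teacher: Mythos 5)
Your proposal is correct and follows essentially the same route as the paper's own proof: a direct bookkeeping verification that each path vertex picks up interval-tiling pairs of leaf colours, followed by the harder check that for each path edge $e_i$ the leaf-edge colour families of its two endpoints (arithmetic progressions of common difference $4$ -- your residues mod $4$) interleave without gaps to cover $[2k-1]\setminus\{\mathbf{c}(e_i)\}$, with the boundary elements treated separately. The paper organises the identical computation via explicit partitions such as $N_{\leftarrow i}^{odd}$, $N_{\leftarrow i}^{even}$, $E_{i\rightarrow}^{odd}$, $E_{i\rightarrow}^{even}$ rather than residue-class language, but the content is the same.
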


Note that Algorithm \ref{algorithm:total-colouring-total-dense-path-type-1} works for any total dense path of type 1 with at least 3 vertices. Algorithm \ref{algorithm:total-colouring-total-dense-path-type-2} and \ref{algorithm:total-colouring-total-dense-path-type-3}, presented in Appendix \ref{appendix:section:algorithms}, compute a partial total colouring of $T$ by assigning colours to $T[P]$ when $P$ is a total dense path of type 2 and 3, respectively. The next two lemmas shows that every total dense element in a total dense path of type 2 and type 3 can pick up $2k-3$ and $2k-4$ colours, respectively.

\begin{lemma}\label{lemma:total-colouring-total-dense-subpath-type-2}
    Algorithm \ref{algorithm:total-colouring-total-dense-path-type-2} assigns exactly $2k-2$ colours to elements of $T[P]$ such that (i) the total dense elements in $P$ are given colours $1,2,\dots,2k-2$, and (ii) every total dense element in $P$ picks up $2k-3$ colours from the set $[2k-2]$.
\end{lemma}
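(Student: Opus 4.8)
The plan is to verify the two stated properties by reading off the colour assignments that Algorithm~\ref{algorithm:total-colouring-total-dense-path-type-2} makes along the backbone of $P$ and on the pendant elements, reducing the bulk of the work to the already-settled type~1 case. First I would fix notation: since $P$ is a total dense path of type~2, exactly one boundary vertex fails to be total dense, say $w_k$ (the case of $w_1$ being symmetric). Then $P'=\langle w_1,\dots,w_{k-1}\rangle$ is the maximal type~1 subpath of $P$ supplied by the discussion following Definition~\ref{definition:total-dense-path}; it has $k-1$ vertices and hence $2(k-1)-1=2k-3$ total dense elements, which are precisely the total dense elements of $P$ other than the edge $e_{k-1}=(w_{k-1},w_k)$. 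Thus $P$ has $q=2k-2$ total dense elements, and property~(i) is immediate from the backbone assignment $\mathbf{c}(w_i)=2i-1$ and $\mathbf{c}(e_i)=2i$ for $i\in[k-1]$, which parallels Algorithm~\ref{algorithm:total-colouring-total-dense-path-type-1}: these colours are the odd numbers $1,3,\dots,2k-3$ together with the even numbers $2,4,\dots,2k-2$, which exhaust $[2k-2]$ and are pairwise distinct on the total dense elements. Relative to the type~1 colouring of $P'$ the single new colour is $2k-2$, carried by the extra total dense edge $e_{k-1}$, while $w_k$, being non-dense, may reuse a colour of $[2k-2]$ consistent with properness.

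For property~(ii) I would split the total dense elements into those lying in $P'$ and the extra edge $e_{k-1}$. Applying Lemma~\ref{lemma:total-colouring-total-dense-subpath-type-1} to the type~1 path $P'$, every total dense element of $P'$ already picks up, within $T[P']$, all colours of $\{1,\dots,2k-3\}$ except its own; what remains is to show that each also picks up the new colour $2k-2$, and that $e_{k-1}$ picks up all of $\{1,\dots,2k-3\}$. For a total dense \emph{vertex} $w_i$ the extra colour is easy to supply in the relevant regime $m=q=2k-2$: its total degree $d_t(w_i)=2d(w_i)$ is even and so cannot equal the odd value $m-1$, whence $w_i$ has strictly more than $m-1$ total neighbours and a spare pendant slot that Algorithm~\ref{algorithm:total-colouring-total-dense-path-type-2} can colour $2k-2$ without disturbing the colours $w_i$ already picks up. The edge $e_{k-1}$ I would treat directly: it is incident to $w_{k-1}$, $w_k$ and the pendant edges at both endpoints, and I would check that the algorithm colours those pendants so as to furnish every colour of $\{1,\dots,2k-3\}$, giving $\mathbf{c}(N_t(e_{k-1}))=[2k-2]\setminus\{2k-2\}$.

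Alongside this I would confirm that the assignment is a proper partial total colouring of $T[P]$. The only genuinely new adjacencies, beyond those certified in the type~1 analysis of $P'$, are those meeting $w_k$ and the edge $e_{k-1}$, so a single local check at $w_k$ suffices; the parity-driven (odd/even $j$) pendant pattern inherited from Algorithm~\ref{algorithm:total-colouring-total-dense-path-type-1} then guarantees, as before, that no total dense element repeats a colour.

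The main obstacle is property~(ii) for the total \emph{tight} elements, those of total degree exactly $m-1$, which have no slack: such an element cannot receive colour $2k-2$ through a freshly added pendant, so colour $2k-2$ must already appear among its fixed total neighbours. Since $m-1$ is odd when $m=2k-2$, these tight elements are necessarily edges, and all of them sit near the boundary of the dense region around $w_k$; the crux is to track exactly where Algorithm~\ref{algorithm:total-colouring-total-dense-path-type-2} deposits colour $2k-2$ (on $e_{k-1}$ and on the designated pendants of $w_{k-1}$ and $w_k$) and to verify that every tight edge is incident to an element of that colour. Everything else — distinctness of the backbone colours, coverage of $\{1,\dots,2k-3\}$, and properness — follows either directly from Lemma~\ref{lemma:total-colouring-total-dense-subpath-type-1} on $P'$ or from the routine local check at $w_k$, so the weight of the argument rests on this no-slack verification for the tight edges together with the parity bookkeeping carried over from the type~1 case.
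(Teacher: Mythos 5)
Your overall skeleton --- peel off the type-1 subpath $P'$, invoke Lemma \ref{lemma:total-colouring-total-dense-subpath-type-1} to get the colours $1,\dots,2k-3$, then supply the single missing colour $2k-2$ --- is exactly the paper's strategy, and your treatment of the dense \emph{vertices} (each has $d(w_i)\ge k-1$, hence an uncoloured pendant slot that can receive colour $2k-2$) matches the paper's argument. The genuine gap is in your account of the dense \emph{edges}. You assert that Algorithm \ref{algorithm:total-colouring-total-dense-path-type-2} deposits colour $2k-2$ ``on $e_{k-1}$ and on the designated pendants of $w_{k-1}$ and $w_k$'', and that what remains is to check that tight edges near $w_k$ see that colour. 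This misreads the algorithm: its loop places colour $2k-2$ on one pendant element of \emph{each} of $w_1,\dots,w_{k-2}$, alternating by parity between a pendant vertex and a pendant edge, and it never touches the pendants of $w_{k-1}$ or $w_k$ (indeed a pendant \emph{edge} of $w_{k-1}$ or $w_k$ coloured $2k-2$ would clash with $e_{k-1}$, which already has that colour). The alternation is the crux of the whole lemma: a backbone edge $e_i$ is adjacent to pendant \emph{edges} at its endpoints but not to pendant \emph{vertices}, so the only way $e_i$ can pick up $2k-2$ is from a pendant edge at $w_i$ or at $w_{i+1}$ (or from $e_{k-1}$, which serves only $w_{k-1}$ and $e_{k-2}$). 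Under the placement you describe, every dense element at distance two or more from $w_k$ --- in particular $w_1$ and $e_1$ --- never sees colour $2k-2$ at all, so property (ii) fails for them and the argument collapses.

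A second, related problem is that your reduction of the difficulty to ``tight'' elements is off target. In the regime $m=q=2k-2$ you work in, $d(w_i)\ge k-1$ for all $i\in[k-1]$, so every backbone edge $e_i$ with $i\le k-2$ has $d_t(e_i)\ge 2k-2=m$ and is therefore \emph{not} tight; the obstruction for these edges is the adjacency structure just described, not lack of slack. The only element to which your tight-edge concern genuinely applies is $e_{k-1}$ itself, and there your plan (``check that the algorithm colours the pendants at both endpoints so as to furnish every colour of $\{1,\dots,2k-3\}$'') cannot be executed: Algorithm \ref{algorithm:total-colouring-total-dense-path-type-2} leaves $w_k$ and all pendants of $w_{k-1}$ and $w_k$ uncoloured, and the pendant edges of $w_{k-1}$ that Algorithm \ref{algorithm:total-colouring-total-dense-path-type-1} did colour carry only about half of the colours in $[2k-3]$, while the pendant vertices of $w_{k-1}$ are not adjacent to $e_{k-1}$ and so do not help it. (For what it is worth, the paper's own proof is also silent on $e_{k-1}$ picking up its $2k-3$ colours --- it verifies the claim only for $w_1,\dots,w_{k-1}$ and $e_1,\dots,e_{k-2}$ --- so that last point is a weakness shared with the published argument; but the misplacement of colour $2k-2$, and the consequent absence of any mechanism giving the backbone edges their missing colour, is a gap specific to your proposal.)
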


\begin{lemma}\label{lemma:total-colouring-total-dense-subpath-type-3}
    Algorithm \ref{algorithm:total-colouring-total-dense-path-type-3} assigns exactly $2k-3$ colours to elements of $T[P]$ such that (i) the total dense elements in $P$ are given colours $1,2,\dots,2k-3$, and (ii) every total dense element in $P$ picks up $2k-4$ colours from the set $[2k-3]$.
\end{lemma}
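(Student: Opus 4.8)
The plan is to establish both claims by directly verifying the colouring that Algorithm~\ref{algorithm:total-colouring-total-dense-path-type-3} produces on $T[P]$, following the template of the type-1 argument (Lemma~\ref{lemma:total-colouring-total-dense-subpath-type-1}) while accounting for the fact that in a type-3 path neither boundary vertex is total dense. First I would record the structural facts that drive everything. Since $P$ is of type 3, its $2k-3$ total dense elements form the contiguous \emph{core} $e_1,w_2,e_2,\dots,w_{k-1},e_{k-1}$, whereas $w_1$ and $w_k$ are not total dense (so $d(w_1),d(w_k)\leq k-3$). As $m=m_t(T)=q=2k-3$, every total dense element has total degree at least $2k-4$, and hence must pick up exactly $2k-4$ distinct colours to become b-chromatic; this reduces part (ii) to showing that the total neighbourhood of each core element receives every colour of $[2k-3]$ except its own, with no colour repeated in the total neighbourhood of a total tight element.

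For part (i), I would describe how Algorithm~\ref{algorithm:total-colouring-total-dense-path-type-3} assigns the colours $1,2,\dots,2k-3$ in order along the core (so that, written out, $\mathbf{c}(e_i)=2i-1$ and $\mathbf{c}(w_i)=2i-2$ for $2\leq i\leq k-1$), consecutive core elements thus receiving consecutive colours exactly as Algorithm~\ref{algorithm:total-colouring-total-dense-path-type-1} does once the two end vertices are deleted. The non-total-dense vertices $w_1$ and $w_k$ are then given colours that do not appear in their (small) neighbourhoods and that are chosen to assist their total dense neighbours $w_2$ and $w_{k-1}$. Claim (i) is immediate from this assignment, and, since the pendant and boundary elements reuse only colours from $[2k-3]$, exactly $2k-3$ colours are used. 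Properness follows because consecutive path elements receive consecutive colours, while the pendant elements are assigned, by the same parity-based rule as in Algorithm~\ref{algorithm:total-colouring-total-dense-path-type-1}, colours distinct from those of the vertex they hang from and its incident path edges.

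For part (ii), I would split the core into the generic interior elements and the boundary. For an interior vertex $w_i$, and for an interior edge $e_i$ whose two endpoints are both total dense, the verification is essentially the case analysis from the proof of Lemma~\ref{lemma:total-colouring-total-dense-subpath-type-1}: the algorithm's two inner loops distribute, by the parity of the loop index, the ``lower'' colours $\{1,\dots,2i-4\}$ and the ``upper'' colours on the two sides of $w_i$, so that together with the four colours $\mathbf{c}(w_{i-1}),\mathbf{c}(e_{i-1}),\mathbf{c}(e_i),\mathbf{c}(w_{i+1})$ picked up along the path the total neighbourhood covers $[2k-3]\setminus\{\mathbf{c}(w_i)\}$ without repetition; the same bookkeeping handles the interior edges. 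The only genuinely new elements are those incident to $w_1$ or $w_k$.

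The step I expect to be the main obstacle is precisely this relaxed boundary, namely the total dense elements $w_2,e_1$ (and symmetrically $w_{k-1},e_{k-1}$). Because $w_1$ is not total dense it carries few pendant edges, so the colours that $w_1$ would have supplied to $e_1=(w_1,w_2)$ in the type-1 setting must instead come from $e_2$ and the pendant edges of $w_2$; likewise one of the four path colours that $w_2$ picks up is the freely chosen colour $\mathbf{c}(w_1)$. I would therefore track explicitly which colours the pendant elements of $w_1$ and of $w_2$ receive and verify that, together with $\mathbf{c}(w_1),\mathbf{c}(w_2),\mathbf{c}(e_2)$, they cover all of $[2k-3]\setminus\{1\}$ for $e_1$ and all of $[2k-3]\setminus\{2\}$ for $w_2$, in each case without repetition; the argument for $w_{k-1}$ and $e_{k-1}$ is symmetric. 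Confirming that the parity rule of Algorithm~\ref{algorithm:total-colouring-total-dense-path-type-3} performs this re-balancing at the boundary, rather than merely reproducing the type-1 behaviour in the interior, is where the real content of the lemma lies.
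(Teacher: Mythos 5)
Your overall strategy is the same as the paper's: treat the type-3 path by reproducing the type-1 colouring on the interior (the paper does this literally, calling Algorithm~\ref{algorithm:total-colouring-total-dense-path-type-1} on $P'=\langle w_2,\dots,w_{k-1}\rangle$ and shifting every colour up by one, so the interior receives colours $2,\dots,2k-4$), and then distributing the two new colours $1$ and $2k-3$ to one extra pendant element per interior vertex by a parity rule. Your part~(i) formula $\mathbf{c}(w_i)=2i-2$, $\mathbf{c}(e_i)=2i-1$ agrees with this, and your reduction of the interior case of part~(ii) to the bookkeeping of Lemma~\ref{lemma:total-colouring-total-dense-subpath-type-1} is exactly what the paper does. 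The problem is that the step you yourself call ``where the real content of the lemma lies'' --- the boundary elements $e_1,w_2$ and symmetrically $e_{k-1},w_{k-1}$ --- is never carried out: announcing that you ``would track explicitly which colours the pendant elements of $w_1$ and of $w_2$ receive'' is a plan, not a proof, so the proposal is incomplete precisely at its acknowledged crux.

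Moreover, the verification as you plan it would fail against Algorithm~\ref{algorithm:total-colouring-total-dense-path-type-3} as written. That algorithm assigns no colour to $w_1$, to $w_k$, to any edge incident to $w_1$ or $w_k$ outside $P$, nor even to the boundary edges $e_1$ and $e_{k-1}$ themselves; the colours $1$ and $2k-3$ go only to pendant elements of the interior vertices. So your part~(i) assertion that $e_1$ receives colour $1$ and $e_{k-1}$ receives colour $2k-3$ is not something the algorithm does, and for part~(ii) the coloured portion of $N_t(e_1)$ consists only of $w_2$, $e_2$ and the $k-3$ coloured pendant edges of $w_2$: that is $k-1$ colours, short of the required $2k-4$ by $k-3$. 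The missing colours (colour $4=\mathbf{c}(w_3)$ together with the colours placed on pendant \emph{vertices} of $w_2$) occur nowhere in $e_1$'s total neighbourhood, so no amount of checking recovers them; one must in addition colour $w_1$ and its incident edges with exactly those colours, which is possible since $e_1$ total dense forces $d(w_1)+d(w_2)\geq 2k-4$, but this amounts to amending the algorithm rather than verifying it --- the ``re-balancing'' you hoped its parity rule already performs. For what it is worth, the paper's own proof has the same blind spot: it verifies $w_2,\dots,w_{k-1}$ and $e_2,\dots,e_{k-2}$ and is silent on $e_1$ and $e_{k-1}$ (its claim that $w_2$ picks up $2k-4$ colours likewise presupposes $\mathbf{c}(e_1)=1$). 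So your instinct about where the difficulty sits is sound, but neither your proposal nor the paper's argument closes that gap.
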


Now we are ready to show that if $T$ is not a total pivoted tree of type 2 with exactly one central path then $\varphi_t(T)=m_t(T)$. The next theorem states the main result of this section.

\begin{theorem}
    Let $T$ be a caterpillar with central path $\mathcal{P}$ and $m_t(T)\geq 6$ such that $T$ is not total pivoted. Assume that every total dense element is in $\mathcal{P}$ and that $T$ has only one central path. Then, $\varphi_t(T)=m_t(T)$.
\end{theorem}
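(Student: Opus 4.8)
The plan is to prove the two inequalities separately. The bound $\varphi_t(T)\le m_t(T)$ is immediate from Lemma~\ref{lemma:total-m-degree}, so the entire task is to construct a total b-chromatic $m$-colouring, where $m=m_t(T)$. The structural observation that drives the construction is a counting bound: sorting the elements by decreasing total degree, at most $m$ of them can be \emph{strictly dense} (total degree at least $m$), since $m+1$ elements of total degree $\ge m=(m+1)-1$ would force $m_t(T)\ge m+1$, a contradiction. Every remaining element is total tight (total degree exactly $m-1$) or has total degree at most $m-2$. Because all total dense elements lie on $\mathcal{P}$ by hypothesis, they occupy one or more maximal total dense subpaths of the central path, and I would treat these together with the remark that for the main result we may work with a total dense path carrying exactly $m$ total dense elements.

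First I would reduce to a single total dense path $P$. If $T$ has exactly one, this is immediate. If $T$ has several, then since $T$ is not total pivoted and, by Lemma~\ref{lemma:total-pivoted-catepillar-type-1-2-mutually-exclusive}, the two pivoted types are mutually exclusive, the forbidden configuration of Definition~\ref{definition:total-pivoted-caterpillar}(2) — two three-element total dense paths joined by a short non-dense path — cannot occur. Using the uniqueness of the central path, I would argue that the total dense elements can be concentrated on a single total dense path $P$ to which one of the three colouring routines applies. I would then determine whether $P$ is of type 1, 2 or 3 and invoke the matching routine, Algorithm~\ref{algorithm:total-colouring-total-dense-path-type-1}, \ref{algorithm:total-colouring-total-dense-path-type-2} or~\ref{algorithm:total-colouring-total-dense-path-type-3}. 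By Lemmas~\ref{lemma:total-colouring-total-dense-subpath-type-1}, \ref{lemma:total-colouring-total-dense-subpath-type-2} and~\ref{lemma:total-colouring-total-dense-subpath-type-3}, this yields a partial total colouring of $T[P]$ that uses exactly $m$ colours, assigns colours $1,\dots,m$ to the total dense elements, and makes each of them pick up the remaining $m-1$ colours; hence there is a total b-chromatic element of every colour $1,\dots,m$, all of them sitting inside $T[P]$.

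Next I would extend this partial colouring to a proper total $m$-colouring of all of $T$ by a single greedy sweep over the still-uncoloured elements, namely the pendant vertices and edges hanging off $\mathcal{P}$ outside $P$ together with the portions of $\mathcal{P}$ beyond the boundary vertices of $P$. The crucial point is that $T[P]$ already contains every strictly dense element, so each uncoloured element $x$ satisfies $d_t(x)\le m-1$; thus at every step at most $m-1$ of the total neighbours of $x$ are already coloured, and a free colour in $[m]$ is guaranteed, independently of the order chosen. Properness is preserved throughout, and because every colour already appears on a total b-chromatic element inside $T[P]$ and colouring further neighbours can only enlarge a neighbourhood's colour set, never shrink it, the $m$ b-chromatic elements remain b-chromatic. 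The completed colouring is therefore a total b-chromatic $m$-colouring, giving $\varphi_t(T)\ge m$ and hence equality. This mirrors the companion argument of Theorem~\ref{theorem:non-total-pivoted-caterpillar-total-element-outside-central-path}, but here all the b-chromatic elements must be supplied by $\mathcal{P}$ itself.

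I expect the main obstacle to be the reduction step for caterpillars with more than one total dense path: showing that, once total pivoted type 2 is excluded and the central path is unique, the total dense elements really can be gathered onto a single total dense path of a definite type that the algorithms accept. This is precisely where both hypotheses of the theorem are used, and it will require a careful case analysis of how the maximal total dense subpaths can be arranged along $\mathcal{P}$ and of the total-degree bookkeeping at their boundary vertices. A secondary technical point is checking that the greedy extension never stalls at the interface between $T[P]$ and the rest — in particular that the boundary edges already coloured by the algorithm do not exhaust the palette at an adjacent uncoloured element — which again follows from the $d_t(x)\le m-1$ bound but must be verified against the explicit colour assignments recorded in Figure~\ref{fig:total-b-chromatic-colouring-caterpillar-path-P'}.
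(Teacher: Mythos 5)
Your single-path argument is, in substance, the paper's own proof: the upper bound is Lemma~\ref{lemma:total-m-degree}; for the lower bound the paper likewise passes to a (sub)path of the unique total dense path $P$ carrying exactly $m=m_t(T)$ total dense elements, classifies $P$ as type 1, 2 or 3, invokes Lemma~\ref{lemma:total-colouring-total-dense-subpath-type-1}, \ref{lemma:total-colouring-total-dense-subpath-type-2} or \ref{lemma:total-colouring-total-dense-subpath-type-3} to make those $m$ elements total b-chromatic in colours $1,\dots,m$, and then completes the colouring greedily (vertices first, then edges), using exactly the degree bookkeeping you cite: since every total dense element lies on $\mathcal{P}$, every vertex of $T$ has degree at most $m-3$, and a spare colour always exists for each uncoloured element (your bound $d_t(x)\le m-1$ is safe but can be sharpened: total tight elements are by definition total dense, hence already coloured inside $T[P]$ when $P$ carries exactly $m$ total dense elements, so every uncoloured element satisfies $d_t(x)\le m-2$). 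Your observation that extending the colouring can only enlarge neighbourhood colour sets, so the b-chromatic elements stay b-chromatic, is also how the paper concludes.

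Where you diverge is the ``reduction'' for caterpillars with several total dense paths, which you flag as the main obstacle --- but this is a detour, not a gap in the theorem as the paper proves it. The paper reads the hypothesis as asserting that $T$ has exactly one total dense path (its proof opens with ``Let $P\subseteq \mathcal{P}$ be the only total dense path of $T$''), so the multi-path case never arises. Moreover, the fix you gesture at would not work if it did arise: total dense elements sitting on two disjoint maximal total dense subpaths of $\mathcal{P}$ cannot be ``concentrated'' onto a single path --- they are where they are, Algorithms~\ref{algorithm:total-colouring-total-dense-path-type-1}--\ref{algorithm:total-colouring-total-dense-path-type-3} operate on a single path only, and excluding type 2 of Definition~\ref{definition:total-pivoted-caterpillar} does not remove the other dense paths. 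The paper itself only states in Section~\ref{section:concluding-remarks} that it \emph{believes} the algorithms extend to caterpillars with more than one total dense path; that case is left open. So, restricted to the hypothesis as the paper intends it, your proof coincides with the paper's; the multi-path analysis you anticipate lies outside the statement and, as sketched, would not go through.
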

\begin{proof}[Proof sketch]
    Let $P\subseteq \mathcal{P}$ be the only total dense path of $T$ and assume that $m_t(T)=m$. If $P$ has $q>m$ total dense elements then we can always take a subpath of $P$ with exactly $q=m$ total dense elements. Therefore, we will assume that $P$ has exactly $m$ total dense elements. If $P$ is of type 1 then, by Lemma \ref{lemma:total-colouring-total-dense-subpath-type-1} every total dense element of $P$ can become total b-chromatic. If $P$ is of type 2 then, by Lemma \ref{lemma:total-colouring-total-dense-subpath-type-2} every total dense element of $P$ can become total b-chromatic. Otherwise, $P$ is of type 3 and by Lemma \ref{lemma:total-colouring-total-dense-subpath-type-3}, every total dense element of $T$ can become total b-chromatic. It remains to colour every other element in $T$. We will first colour the uncoloured vertices of $T$ and then the uncoloured edges of $T$. Since every total dense element is in $\mathcal{P}$, it follows that every vertex $w\in V(T)$ has $d(w)\leq m-3$. Therefore, there exists a colour for every pair of adjacent vertices. Next, observe that every edge $e\in \mathcal{P}$ is adjacent or incident to at most 4 elements in the central path. Since $m\geq 6$, it follows that there is a spare colour for $e$. Next, observe that every edge $e\not \in \mathcal{P}$ is incident to at most $m-4$ edges and adjacent to 2 vertices. It follows that $e$ is adjacent or incident to at most $m-2$ other elements. Therefore, there is a spare colour for $e$. Hence, $\varphi_t(T)=m_t(T)$.
\end{proof}

\section{Concluding remarks}\label{section:concluding-remarks}
We proved that computing $\varphi_t(G)$ is NP-hard in general graph. We believe that our proof can be adapted for bipartite graphs. Furthermore, we presented a sufficient condition for a caterpillar $T$ to have $\varphi_t(T)=m_t(T)-1$. Moreover, we presented a couple of sufficient conditions for a caterpillar $T$ to have $\varphi_t(T)=m_t(T)$. These sufficient conditions exploit the structure of caterpillars to obtain a polynomial-time algorithm that computes a total b-chromatic colouring of $T$ using $\varphi_t(T)$ colours. We believe that Algorithms \ref{algorithm:total-colouring-total-dense-path-type-1}, \ref{algorithm:total-colouring-total-dense-path-type-2} and \ref{algorithm:total-colouring-total-dense-path-type-3} can be used to compute a total b-chromatic colouring that uses the maximum number of colours for a non-total pivoted caterpillar with more than one total dense path. The ideas presented in Definition \ref{definition:total-pivoted-caterpillar} can also be used to define the concept of a total pivoted tree. In this sense, a total pivoted tree $T$ will have $\varphi_t(T)\leq m_t(T)-1$. We conjecture that a non-total pivoted tree $T$ satisfies $\varphi_t(T)=m_t(T)$.

\section*{Acknowledgements}

We would like to thank the Programme Committee and the anonymous reviewers for their valuable comments, which helped to improve the presentation of this paper. The first author is supported by a scholarship from the College of Science and Engineering, University of Glasgow, and Becas Carlos Antonio Lopez (BECAL), Paraguay. For the purpose of open access, the authors have applied a Creative Commons Attribution (CC BY) licence to any Author Accepted Manuscript version arising from this submission.



\bibliographystyle{elsarticle-num}
\bibliography{bibliography}

\begin{thebibliography}{10}
\expandafter\ifx\csname url\endcsname\relax
  \def\url#1{\texttt{#1}}\fi
\expandafter\ifx\csname urlprefix\endcsname\relax\def\urlprefix{URL }\fi
\expandafter\ifx\csname href\endcsname\relax
  \def\href#1#2{#2} \def\path#1{#1}\fi

\bibitem{irving1999b}
R.~W. Irving, D.~F. Manlove, The b-chromatic number of a graph, Discrete Applied Mathematics 91~(1-3) (1999) 127--141.

\bibitem{jakovac2018b}
M.~Jakovac, I.~Peterin, The b-chromatic number and related topics—a survey, Discrete Applied Mathematics 235 (2018) 184--201.

\bibitem{elghazel2006new}
H.~Elghazel, V.~Deslandres, M.-S. Hacid, A.~Dussauchoy, H.~Kheddouci, A new clustering approach for symbolic data and its validation: Application to the healthcare data, in: Foundations of Intelligent Systems: 16th International Symposium, ISMIS 2006, Bari, Italy, September 27-29, 2006. Proceedings 16, Springer, 2006, pp. 473--482.

\bibitem{gaceb2008improvement}
D.~Gaceb, V.~Eglin, F.~Lebourgeois, H.~Emptoz, Improvement of postal mail sorting system, International Journal of Document Analysis and Recognition (IJDAR) 11~(2) (2008) 67--80.

\bibitem{manlove1998minimaximal}
D.~F. Manlove, Minimaximal and maximinimal optimisation problems: a partial order-based approach, Ph.D. thesis, University of Glasgow (1998).

\bibitem{kratochvil2002b}
J.~Kratochv{\'\i}l, Z.~Tuza, M.~Voigt, On the b-chromatic number of graphs, in: International Workshop on Graph-Theoretic Concepts in Computer Science, Springer, 2002, pp. 310--320.

\bibitem{havet2012b}
F.~Havet, C.~L. Sales, L.~Sampaio, b-coloring of tight graphs, Discrete Applied Mathematics 160~(18) (2012) 2709--2715.

\bibitem{bonomo2015b}
F.~Bonomo, O.~Schaudt, M.~Stein, M.~Valencia-Pabon, b b-coloring is np-hard on co-bipartite graphs and polytime solvable on tree-cographs, Algorithmica 73 (2015) 289--305.

\bibitem{campos2015b}
V.~A. Campos, C.~V. Lima, N.~A. Martins, L.~Sampaio, M.~C. Santos, A.~Silva, The b-chromatic index of graphs, Discrete Mathematics 338~(11) (2015) 2072--2079.

\bibitem{bonomo2009b}
F.~Bonomo, G.~Dur{\'a}n, F.~Maffray, J.~Marenco, M.~Valencia-Pabon, On the b-coloring of cographs and {$P_4$}-sparse graphs, Graphs and Combinatorics 25~(2) (2009) 153--167.

\bibitem{campos2015graphs}
V.~Campos, C.~Lima, A.~Silva, Graphs of girth at least 7 have high b-chromatic number, European Journal of Combinatorics 48 (2015) 154--164.

\bibitem{kouider2017b}
M.~Kouider, M.~Zamime, On the b-coloring of tight graphs, Journal of Combinatorial Optimization 33 (2017) 202--214.

\bibitem{cabello2011b}
S.~Cabello, M.~Jakovac, On the b-chromatic number of regular graphs, Discrete Applied Mathematics 159~(13) (2011) 1303--1310.

\bibitem{jaffke2024b}
L.~Jaffke, P.~T. Lima, D.~Lokshtanov, b-coloring parameterized by clique-width, Theory of Computing Systems 68~(4) (2024) 1049--1081.

\bibitem{Zaker2020}
M.~Zaker, A new vertex coloring heuristic and corresponding chromatic number, Algorithmica 82~(9) (2020) 2395--2414.

\bibitem{CostaHavet2024}
J.~{Costa Ferreira da Silva}, F.~Havet, On b-greedy colourings and z-colourings, Discrete Applied Mathematics 359 (2024) 250--268.

\bibitem{campos2018edge}
V.~Campos, A.~Silva, Edge-b-coloring trees, Algorithmica 80~(1) (2018) 104--115.

\bibitem{sanchez1989determining}
A.~S{\'a}nchez-Arroyo, Determining the total colouring number is np-hard., Discret. Math. 78~(3) (1989) 315--319.

\bibitem{vizing1964estimate}
V.~G. Vizing, On an estimate of the chromatic class of a p-graph, Diskret analiz 3 (1964) 25--30.

\bibitem{behzad1965graphs}
M.~Behzad, Graphs and their chromatic numbers, Michigan State University, 1965.

\bibitem{geetha2023total}
J.~Geetha, N.~Narayanan, K.~Somasundaram, Total colorings-a survey, AKCE International Journal of Graphs and Combinatorics 20~(3) (2023) 339--351.

\bibitem{morgan13exploring}
D.~L. Morgan, Exploring total graphs of complete bipartite graphs, DePaul Discoveries 13~(1) (2024) 6.

\end{thebibliography}

\newpage
\appendix
\section{Proofs omitted in Section \ref{section:Preliminaries}} \label{appendix:section:proofs-omitted-preliminaries}

\begin{proof}[Proof of Proposition \ref{proposition:total-m-degree-paths}]
    First, observe that $P_1$ consists on a single vertex. It follows that there exists at least one element in $P_1$ with total degree at least 0. Hence, $m_t(P_1)=1$. Next, $P_2$ and $P_3$ contains 2 and 3 vertices, respectively. It follows that $P_2$ and $P_3$ contains at least 3 elements with total degree at least 2, respectively. Hence, $m_t(P_2)=m_t(P_3)=3$. Next, $P_4$ contains 4 vertices. Therefore, $P_n$ contains at least 4 elements with total degree at least 3. Hence, $m_t(P_4)=4$. Lastly, $P_n$, for $n\geq 5$, contains at least 5 elements. Observe that every element in $P_n$ has total degree at most 4. Moreover, $P_n$ contains at least three vertices with total degree 4, and at least two edges with total degree 4. Therefore, $P_n$ contains at least 5 elements with total degree at least 5. Hence, $m_t(T)=5$.
\end{proof}

\begin{proof}[Proof of Proposition \ref{proposition:total-b-chromatic-colouring-paths}]
    We will construct a total b-chromatic $m_t(P_n)$-colouring $\mathbf{c}$ of $P_n$, for $n\geq 1$. Assume that $P_n=\langle w_1,\dots,w_n\rangle$. Assume first that $n=1$. Then, assign colour $\mathbf{c}(w_1)\gets 1$ and observe that $\varphi_t(P_1)=1$. Next, assume that $n=2$ and assign colours $\mathbf{c}(w_1)\gets 1, \mathbf{c}(w_1,w_2)\gets 2$ and $\mathbf{c}(w_2)\gets 3$. Note that every element of colour $i\in \{1,2,3\}$ in $P_2$ is picking up colours $\{1,2,3\}\setminus \{i\}$. Hence, $\varphi_t(P_2)=3$. Next, assume that $n=3$ and assign colours $\mathbf{c}(w_1)\gets 1, \mathbf{c}(w_1,w_2)\gets 2, \mathbf{c}(w_2)\gets 3, \mathbf{c}(w_2,w_3)\gets 1$ and $\mathbf{c}(w3)\gets 2$. Note that every element of colour $i\in \{1,2,3\}$ in $P_2$ is picking up colours $\{1,2,3\}\setminus \{i\}$. Hence, $\varphi_t(P_3)=3$. Assume now that $n=4$ and assign colours $\mathbf{c}(w_1)\gets 1, \mathbf{c}(w_1,w_2)\gets 2, \mathbf{c}(w_2)\gets 3, \mathbf{c}(w_2,w_3)\gets 4$ and $\mathbf{c}(w3)\gets 1, \mathbf{c}(w_3,w_4)\gets 2$ and $\mathbf{c}(w_4)\gets 4$. Note that $(w_1,w_2), w_3, (w_2,w_3)$ and $w_4$ are total b-chromatic elements of colours 1,2,3 and 4, respectively. Hence, $\varphi_t(P_4)=4$. 
    
    Now, assume that $n\geq 5$. Observe that $d_t(w_2)=d_t(w_3)=d_t(w_4)=4$ and $d_t(w_2,w_3)=d_t(w_3,w_4)=4$. Next, assign colours $\mathbf{c}(w_2)\gets 1, \mathbf{c}(w_2,w_3)\gets 2, \mathbf{c}(w_3)\gets 3, \mathbf{c}(w_3,w_4)\gets 4$ and $\mathbf{c}(w_4)\gets 5$. Observe that $w_3$ is a total b-chromatic vertex of colour 3. Next, assign colours $\mathbf{c}(w_1)\gets 4$ and $\mathbf{c}(w_1,w_2)\gets 5$. Note that vertex $w_2$ and edge $(w_2,w_3)$ are total b-chromatic elements of colour 1 and 2, respectively. Next, assign colours $\mathbf{c}(w_4,w_5)\gets 1$ and $\mathbf{c}(w_5)\gets 2$. Note that edge $(w_3,w_4)$ and vertex $w_4$ are total b-chromatic elements of colour 4 and 5, respectively. Lastly, note that every other vertex $w_{i+1}$ and edge $(w_i, w_{i+1})$, for $i=5,\dots,  n-1$, can be coloured with two different colours from the set $\{1,2,3,4,5\}$. Hence, $\varphi_t(P_n)=5$.
\end{proof}

\begin{proof}[Proof of Proposition \ref{proposition:total-b-chromatic-colouring-stars}]
    We will construct a total b-chromatic $m_t(K_{1,n})$-colouring $\mathbf{c}$ of $K_{1,n}$, for $n\geq 1$. Assume that $u$ is the central vertex of $K_{1,n}$ and let $u_1,\dots,u_n$ be the neighbours of $u$. Next, assign colours $\mathbf{c}(u)\gets 1$ and $\mathbf{c}(u,u_i)\gets i+1$, for $i=1,\dots,n$. Note that $u$ and $u_i$ are a total b-chromatic element of colour 1 and $i+1$, for $i=1,\dots,n$, respectively. Lastly, assign colours $\mathbf{c}(u_i)\gets i+2$, for $i=1,\dots,n-1$, and $\mathbf{c}(u_n)\gets 2$ to complete the total colouring. Hence, $\varphi_t(K_{1,n})=m_t(K_{1,n})=n+1$.
\end{proof}

\newpage

\section{Proof omitted in Section \ref{section:np-completeness}} \label{appendix:section:proofs-omitted-np-completeness}

\begin{proof}[Proof of Theorem \ref{theorem:total-b-chromatic-colouring-np-complete}]
    The total degree of each element in $H$ is:
    \begin{itemize}
        \item $d_t(u_j)=8$ for $1\leq j\leq n$,
        \item $d_t(u_j,v)=n+8$ for $1\leq j\leq n$ and $v\in V(H)$,
        \item $d_t(v)=2n+8$,
        \item $d_t(v,v_i)=2n+8$ for $1\leq i\leq 4$,
        \item $d_t(v_i)=2n+8$ for $1\leq i\leq 4$,
        \item $d_t(v_i, u_i^j)=n+5$ for $1\leq i\leq 4$ and $1\leq j\leq n+3$ and
        \item $d_t(u_i^j)=1$ for $1\leq i\leq 4$ and $1\leq j\leq n+3$.
    \end{itemize}
    
    Now, observe that $n+9$ elements of $H$ have total degree at least $n+8$ so $m_t(H)=n+9$. We will prove that $G$ admits a total colouring with 4 colours if and only if $H$ admits a total b-chromatic colouring with $n+9$ colours. 

    Assume first that $G$ admits a total 4-colouring, namely $\mathbf{c}_G$. Next, let $\mathbf{c}_H$ be a total colouring of $H$. Then, make the following assignment of colours to the elements of $H$:
    \begin{itemize}
        \item $\mathbf{c}_H(u_j)=\mathbf{c}_G(u_j)$ for every $1\leq i\leq n$,
        \item $\mathbf{c}_H(u_k,u_{\ell})=\mathbf{c}_G(u_k,u_{\ell})$ for each edge $(u_k,u_{\ell})\in E(G)$ and $1\leq k,\ell, \leq n$,
        \item $\mathbf{c}_H(u_j,v)=j+4$ for $1\leq j\leq n$,
        \item $\mathbf{c}_H(v)=n+5$,
        \item $\mathbf{c}_H(v,v_i)=n+5+i$ for $1\leq i\leq 4$,
        \item $\mathbf{c}_H(v_i)=i$ for $1\leq i\leq 4$,
        \item Let $i\in \{1,2,3,4\}$ be an integer and $a,b,c\in \{1,2,3,4\}\setminus \{i\}$ be three different colours:
        \begin{itemize}
            \item $\mathbf{c}_H(u_i^j)=z_i$ where $z_i$ is an arbitrary element from $\{a,b,c\}$ for $1\leq j\leq n$,
            \item $\mathbf{c}_H(u_i^{n+1})=n+5+a$, $\mathbf{c}_H(u_i^{n+2})=n+5+b$ and $\mathbf{c}_H(u_i^{n+3})=n+5+c$,
            \item $\mathbf{c}_H(v_i,u_i^j)=j+4$ for $1\leq j\leq n$,
            \item $\mathbf{c}_H(v_i,u_i^{n+1})=a$, $\mathbf{c}_H(v_i,u_i^{n+2})=b$, $\mathbf{c}_H(v_i,u_i^{n+3})=c$.
        \end{itemize}

    \end{itemize}

    \begin{figure}[t!]
        \centering
        \resizebox{0.65\linewidth}{!}{
            \begin{tikzpicture}[node distance={15mm}, thick, main/.style = {draw, circle}]
                \node[main, label=$u_1$] at (2,9) (1) {};
                \node[main, label=$u_2$] at (2,7) (2) {};
                \node[main, label=$u_n$] at (2,4) (3) {};
                
                \draw (2,6.5) ellipse (1.5cm and 4cm);    
                \node[text width=0.5cm] at (2,11) {$G$};
                \node [main, label=$v$, label=below:$n+5$] at (6,7 ) (4) {};
                
                \node [main, label=$v_1$, label=below:1] at (13,13) (5) {};
                \node [main, label=$v_2$, label=below:2] at (13,9)   (6) {};
                \node [main, label=$v_3$, label=below:3] at (13,5)   (7) {};
                \node [main, label=$v_4$, label=below:4] at (13,1)   (8) {};
                
                \node [main, label=left:$u_1^1$, label=above:$z_1$]        at (12, 15) (9) {};
                \node [main, label=right:$u_1^n$, label=above:$z_1$]      at (14, 15) (10) {};
                \node [main, label=right:$u_1^{n+1}$, label=above:{\scriptsize $n+7$}] at (15, 14) (11) {};
                \node [main, label=right:$u_1^{n+2}$, label=above:{\scriptsize $n+8$}] at (15, 13) (12) {};
                \node [main, label=right:$u_1^{n+3}$, label=above:{\scriptsize $n+9$}] at (15, 12) (13) {};
                
                \node [main, label=left:$u_2^1$, label=above:$z_2$]        at (12, 11) (14) {};
                \node [main, label=right:$u_2^n$, label=above:$z_2$]        at (14, 11) (15) {};
                \node [main, label=right:$u_2^{n+1}$, label=above:{\scriptsize $n+6$}] at (15, 10) (16) {};
                \node [main, label=right:$u_2^{n+2}$, label=above:{\scriptsize $n+8$}] at (15, 9) (17) {};
                \node [main, label=right:$u_2^{n+3}$, label=above:{\scriptsize $n+9$}] at (15, 8) (18) {};

                \node [main, label=left:$u_3^1$, label=above:$z_3$]        at (12, 7) (19) {};
                \node [main, label=right:$u_3^n$, label=above:$z_3$]        at (14, 7) (20) {};
                \node [main, label=right:$u_3^{n+1}$, label=above:{\scriptsize $n+6$}] at (15, 6) (21) {};
                \node [main, label=right:$u_3^{n+2}$, label=above:{\scriptsize $n+7$}] at (15, 5) (22) {};
                \node [main, label=right:$u_3^{n+3}$, label=above:{\scriptsize $n+9$}] at (15, 4) (23) {};

                \node [main, label=left:$u_4^1$, label=above:$z_4$]        at (12, 3) (24) {};
                \node [main, label=right:$u_4^n$, label=above:$z_4$]        at (14, 3) (25) {};
                \node [main, label=right:$u_4^{n+1}$, label=above:{\scriptsize $n+6$}] at (15, 2) (26) {};
                \node [main, label=right:$u_4^{n+2}$, label=above:{\scriptsize $n+7$}] at (15, 1) (27) {};
                \node [main, label=right:$u_4^{n+3}$, label=above:{\scriptsize $n+8$}] at (15, 0) (28) {};
                
                \node[main, draw=none] at (1,9.5) (30) {};
                \node[main, draw=none] at (1,9) (31) {};
                \node[main, draw=none] at (1,8.5) (32) {};
                
                \node[main, draw=none] at (1,7.5) (33) {};
                \node[main, draw=none] at (1,7) (34) {};
                \node[main, draw=none] at (1,6.5) (35) {};
                
                \node[main, draw=none] at (1,4.5) (36) {};
                \node[main, draw=none] at (1,4) (37) {};
                \node[main, draw=none] at (1,3.5) (38) {};

                \draw (1) -- (30);
                \draw (1) -- (31);
                \draw (1) -- (32);
                
                \draw (2) -- (33);
                \draw (2) -- (34);
                \draw (2) -- (35);
                
                \draw (3) -- (36);
                \draw (3) -- (37);
                \draw (3) -- (38);
                
                \draw (1) to node[sloped, above] {5} (4);
                \draw (2) to node[sloped, above] {6} (4);
                \draw (3) to node[sloped,above] {$n+4$} (4);
                \draw (4) to node[sloped, above] {$n+6$} (5);
                \draw (4) to node[sloped, above] {$n+7$} (6);
                \draw (4) to node[sloped, above] {$n+8$} (7);
                \draw (4) to node[sloped, above] {$n+9$} (8);

                \draw (5) to node[sloped, above, rotate=180] {5} (9);
                \draw (5) to node[sloped, above] {$n+4$} (10);
                \draw (5) to node[sloped, above] {$2$}  (11);
                \draw (5) to node[sloped, above] {$3$}  (12);
                \draw (5) to node[sloped, above] {$4$}  (13);
                
                \draw (6) to node[sloped, above, rotate=180] {5} (14);
                \draw (6) to node[sloped, above] {$n+4$} (15);
                \draw (6) to node[sloped, above] {$1$} (16);
                \draw (6) to node[sloped, above] {$3$} (17);
                \draw (6) to node[sloped, above] {$4$} (18);

                \draw (7) to node[sloped, above, rotate=180] {5} (19);
                \draw (7) to node[sloped, above] {$n+4$} (20);
                \draw (7) to node[sloped, above] {$1$} (21);
                \draw (7) to node[sloped, above] {$2$} (22);
                \draw (7) to node[sloped, above] {$4$} (23);

                \draw (8) to node[sloped, above, rotate=180] {5} (24);
                \draw (8) to node[sloped, above] {$n+4$} (25);
                \draw (8) to node[sloped, above] {$1$} (26);
                \draw (8) to node[sloped, above] {$2$} (27);
                \draw (8) to node[sloped, above] {$3$} (28);
                
                \path (2) -- (3) node[midway, sloped] {$\dots$};
                \path (9) -- (10) node[midway, sloped] {$\dots$};
                \path (14) -- (15) node[midway, sloped] {$\dots$};
                \path (19) -- (20) node[midway, sloped] {$\dots$};
                \path (24) -- (25) node[midway, sloped] {$\dots$};
            \end{tikzpicture}
        }
        \caption{Transformed graph $H$ with a b-chromatic ($n+9$)-colouring.}
        \label{fig:b-totcol-reduction}
    \end{figure}
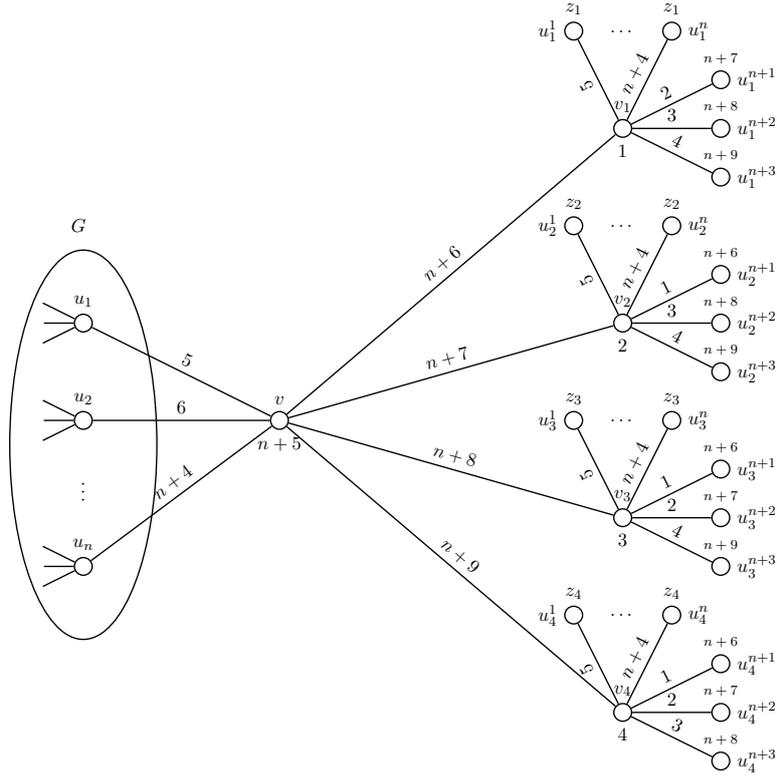
    
    The total colouring $\mathbf{c}_H$ can be observed in Figure \ref{fig:b-totcol-reduction}. It is easy to check that the total colouring is proper. Now, we will prove that $\mathbf{c}_H$ is a total b-chromatic $(n+9)$-colouring. The candidates for becoming total b-chromatic are those elements of $H$ with total degree at least $n+8$: vertices $v_i$ for $1\leq i\leq 4$, edges $(v,v_i)$ for $1\leq i\leq 4$, vertex $v$, and edges $(u_j,v)$ for $1\leq j\leq n$. Vertices $v_i$ for $1\leq i\leq 4$ are picking up colours $\{1,2,3,4\}\setminus \{i\}$ on their neighbours $\{u_i^j\}_{1\leq j\leq n}$, colours $5,\dots, n+4$ on edges $(v_i,u_i^j)$ for $1\leq i\leq 4$ and $1\leq j\leq n$, colour $n+5$ on $v$ and colours $n+5+a, n+5+b$ and $n+5+c$ for $\{a,b,c\}=\{1,2,3,4\}\setminus \{i\}$ on vertices $u_i^{n+1}, u_i^{n+2}$ and $u_i^{n+3}$. Then, vertices $v_i$ are total b-chromatic elements for colour $1\leq i\leq 4$. Now, for every edge $(u_j,v)$ of colour $j+4$, for $1\leq j\leq n$, vertex $u_j$ has colour $\mathbf{c}_H(u_j) \in \{1,2,3,4\}$ and its neighbours $u_j^1,u_j^2,u_j^3$ has colours $\{a,b,c\}=\{1,2,3,4\}\setminus \{\mathbf{c}_H(u_j)\}$ as $G$ admits a total 4-colouring. Moreover, edge $(u_j,v)$ is picking up colours $\{5,\dots,n+4\}\setminus \{j+4\}$ as is sharing vertex $v$ with other edges $(u_{j'},v)$ for $1\leq j'\leq n$ with $j'\neq j$. Furthermore, edges $(u_j,v)$ are picking up colours $n+5$ and $n+5+i$ on vertex $v$ and edges $(v,v_i)$ for $1\leq i\leq 4$. Then edges $(u_j,v)$, for $1\leq j\leq n$, are total b-chromatic elements for colours $5,\dots,n+4$. Next, vertex $v$ of colour $n+5$ is picking up colours $1,\dots,4$ from vertices $v_i$ for $1\leq i\leq 4$, colours $5,\dots, n+4$ from edges $(u_j,v)$ for $1\leq j\leq n$ and colours $n+5+i$ from edges $(v,v_i)$ for $1\leq i\leq 4$ which make vertex $v$ a total b-chromatic element for colour $n+5$. Lastly, edges $(v,v_i)$ for $1\leq i\leq 4$ of colour $n+5+i$ are picking up colours ${a,b,c}=\{1,2,3,4\}\setminus \{i\}$ on edges $(v_i,u_i^{n+1}),(v_i,u_i^{n+2}), (v_i,u_i^{n+3})$ respectively, colours $5,\dots, n+4$ from edges $(v_i,u_i^j)$ for $1\leq j\leq n$, colour $n+5$ from vertex $v$ and colours $n+5+a$, $n+5+b$ and $n+5+c$ for $\{a,b,c\}=\{1,2,3,4\}\setminus \{i\}$ from edges $(v,v_a)$, $(v,v_b)$ and $(v,v_c)$ respectively. Then edges $(v,v_i)$ for $1\leq i\leq 4$ are total b-chromatic elements for colours $n+5+i$.

    Now, let us assume that $H$ admits a total b-chromatic colouring with $n+9$ colours. The only candidates for the total b-chromatic colouring are those elements of $H$ with degree at least $n+8$. These elements are: vertices $v_i$ and edges $(v,v_i)$ for $1\leq i\leq 4$, vertex $v$ and edges $(u_j,v)$ for $1\leq j\leq n$. Now, w.l.o.g. assume that $\mathbf{c}_H(v_i)=i$, $\mathbf{c}_T^{H}(u_j,v)=j+4$, $\mathbf{c}_H(v)=n+5$ and $\mathbf{c}_H(v,v_i)=n+5+i$ for $1\leq i\leq 4$ and for $1\leq j\leq n$. Now, observe that each edge $(u_j,v)$ is picking up colours $j'+4\in \{5,\dots, n+4\}$ for $1\leq j'\leq n$ and $j'\neq j$ on edges $(u_{j'},v)$, colour $n+5$ on $v$ and colour $n+5+i\in \{n+6,\dots, n+9\}$ for $1\leq i\leq 4$ on edges $(v,v_i)$. This implies that $(u_j,v)$ is picking up colours $1,\dots,4$ on vertices $u_j$ and its three incident edges as $G$ is cubic. Moreover, as edge $(u_j,v)$ has total degree $n+8$ it can only pick up colours $1,\dots,4$ on $u_j$ and its neighbours. Furthermore, as $\mathbf{c}_T^{H}$ is a total colouring then $\mathbf{c}_H$ on the induced subgraph $H[\{u_j:1\leq j\leq n\}]$ is a total 4-colouring.
\end{proof}

\newpage

\section{Proofs omitted in Section \ref{section:polynomial-time-algorithm-caterpillars}} \label{appendix:section:proof-section-polynomial-time-algorithms-caterpillars}
\begin{proof}[Proof of Theorem \ref{theorem:total-b-chromatic-colouring-caterpillar-total-m-degree-at-most-5}]
    Assume that the total $m$-degree of $T$ is $m_t(T)=m\leq 5$. We will construct a total b-chromatic colouring $\mathbf{c}$ of $T$ using $m$ colours. Now let $\mathbf{c}:V(T)\cup E(T)\rightarrow [m]$. Next, let $\mathcal{P}$ be the central path of $T$ and assume that every vertex in $\mathcal{P}$ has degree at least 2. Suppose now that there are no vertices of degree at least 3 in $T$. It follows that $T$ is isomorphic to a path $P$, and $\varphi_t(T)=\varphi_t(P)=m_t(P)$ by Proposition \ref{proposition:total-b-chromatic-colouring-paths}. Hence, $T$ must have at least one vertex of degree 3. Furthermore, observe that there can be at most one vertex of degree 4; otherwise, there will be at least 6 elements of total degree at least 5, a contradiction to the fact that $m\leq 5$. Now let $u$ be a vertex of $T$ with $d(u)\in \{3,4\}$. We will split the proof into two cases: 1) $d(u)=3$ and 2) $d(u)=4$.

    \paragraph{Case 1}Assume $u$ is a vertex with $d(u)=3$ and let $u_1,u_2,u_3$ be three neighbours of $u$. Assume w.l.o.g.\ that $u_2$ is a leaf neighbour of $u$. Suppose that no vertex $v\in V(T)$ shares a neighbour with $u$. Then, either $T$ is disconnected, which is a contradiction to our assumption, or $T$ is isomorphic to $K_{1,3}$ and $\varphi_t(K_{1,3})=4$ by Proposition \ref{proposition:total-b-chromatic-colouring-stars}. Hence, there exists a vertex $v\in V(T)$ that share a neighbour with $u$. Assume without loss of generality that $u_1$ is adjacent to $v\neq u$. Now, observe that elements $u,u_1,(u,u_1),(u,u_2),(u,u_3)$ each have total degree at least 4. It follows that $m=5$. Then, assign colours $\mathbf{c}(u)\gets 1$, $\mathbf{c}(u,u_i)\gets i+1$ and $\mathbf{c}(u_i)\gets 5$, for $i=1,2,3$. Observe that elements $u, (u,u_1),(u,u_2), (u,u_3)$ are total b-chromatic elements of colour $1 ,\dots, 4$, respectively. Next, assign  $\mathbf{c}(u_1, v)\gets 3$ and $\mathbf{c}(v)\gets 4$ to make $u_1$ a total b-chromatic element of colour 5.

    Now, we will colour the remaining elements of $T$. Observe that there cannot be a vertex of degree at least 4; otherwise, there will be at least 6 elements of total degree at least 5, which is a contradiction. Similarly, there can be at most two more vertices of degree 3. We will split this case into the subcases: a) there are exactly two vertices of degree 3 in $T$ and b) there are exactly three vertices of degree 3 in $T$. 
    
    \paragraph{Case 1.a}Let $w\neq u$ be the other vertex with $d(w)=3$, and let $w_1,w_2,w_3$ be the three neighbours of $w$ where we assume w.l.o.g.\ that $w_3\not \in P$. We will consider the following three cases: i) $w\in N(u)$, ii) $N(w)\cap N(u)\neq \emptyset$ and $N(w)\cap N(u)=\emptyset$.

    \paragraph{Case 1.a.i}Assume that $w_1=u$. If $w=u_1$ then $w_2=v$. Next, assign $\mathbf{c}(w,w_3)\gets 1$ and $\mathbf{c}(w_3)\gets 2$. Otherwise, $w=u_3$. Assign colours $ \mathbf{c}(w_3)\gets 1, \mathbf{c}(w_2)\gets 2, \mathbf{c}(w,w_2)\gets 1$ and $\mathbf{c}(w,w_3)\gets 2$. 
    
    \paragraph{Case 1.a.ii}Assume w.l.o.g.\ that $N(w)\cap N(u)=\{w_1\}$. If $w=v$ then $w_1=u_1$. Observe that $w_1$ and $(w,w_1)$ are already coloured. Next, assign $\mathbf{c}(w_3)\gets 1, \mathbf{c}(w_2)\gets 2, \mathbf{c}(w,w_2)\gets 1$ and $\mathbf{c}(w,w_3)\gets 2$. Otherwise, $w$ is adjacent to $u_3=w_1$. Then, assign $\mathbf{c}(w)\gets 1, \mathbf{c}(w_2)\gets 2, \mathbf{c}(w_3)\gets 3, \mathbf{c}(w,w_1)\gets 2,\mathbf{c}(w,w_2)\gets 3$ and $\mathbf{c}(w,w_3)\gets 4$.

    \paragraph{Case 1.a.iii}Notice that $w$ may be adjacent to $v$. Next, assign colours $\mathbf{c}(w)\gets 5,\mathbf{c}(w,w_1)\gets 1, \mathbf{c}(w,w_2)\gets 2, \mathbf{c}(w,w_3)\gets 3$. Then, assign colour 4 to the uncoloured neighbours of $w$.

    Now we will colour the uncoloured vertices of $T$, and then colour the uncoloured edges of $T$. Notice that we have coloured every vertex of degree 3 and its incident edges. Therefore, every uncoloured vertex of $x\in V(T)$ has $d(x)\leq 2$. It follows that every pair of adjacent vertices can be assigned different colours. Moreover, every uncoloured edge $e\in E(T)$ is incident to vertices of degree at most 2. Now let $e=(x,y)$ be an uncoloured edge. Since $d(x),d(y)\leq2$, then $e$ is incident to at most 4 different elements. Since we are using 5 colours, it follows that there exists a spare colour for $e$.
    
    \paragraph{Case 1.b}Observe that $\mathcal{P}$ contains only three vertices, each of them of degree 3, otherwise there are at least 6 elements of total degree 5, a contradiction to the fact that $m\leq 5$. Since $d(u)=3$ and $u_2\not \in P$, it follows that $d(u_1)=d(u_3)=3$. Notice that $v\not \in P$. Now let $u_1' \in N(u_1)\setminus \{u,v\}$ and $u_3',u_3''\in N(u_3)\setminus \{u\}$ be the other neighbours of $u_1$ and $u_3$ not in $\mathcal{P}$, respectively. Assign colours $\mathbf{c}(u_1')\gets 3, \mathbf{c}(u_1,u_1')\gets 4$, $\mathbf{c}(u_3')\gets 1,\mathbf{c}(u_3'')\gets 2, \mathbf{c}(u_3,u_3')\gets 2$ and $\mathbf{c}(u_3,u_3'')\gets 1$.

    \paragraph{Case 2}Assume now that $d(u)=4$ and let $u_1,u_2,u_3,u_4$ be the neighbours of $u$. There are five elements in $E(u)\cup \{u\}$ with total degree at least 4. Then, assign colours $\mathbf{c}(u)\gets 1$ and $ \mathbf{c}(u,u_i)\gets i+1$, for $i=1,\dots,4$. Observe that elements in $E(u)\cup \{u\}$ are total b-chromatic elements of colours $1,\dots,5$. Now, it remains to colour the uncoloured elements of $T$. First, assign colours $\mathbf{c}(u_i)\gets (i+2)$, for $i=1,2,3$, and $\mathbf{c}(u_4)\gets 5$. Next, observe that no other vertex $w\in V(T)$ has $d(w)=3$ otherwise there will be at least 6 elements of total degree at least 5, a contradiction to the fact that $m\leq 5$. Therefore, every vertex $w\in V(T)\setminus \{u\}$ has $d(w)\leq 2$. Now, we will colour first the uncoloured vertices of $T$ and then the uncoloured edges. First, since we are using 5 colours, then every pair of adjacent vertices can be assigned different colours. Now, observe that the edges incident to $u$ are already coloured. Now let $e=(x,y)$ be an uncoloured edge. Note that $d(x),d(y) \leq 2$. Then, $e$ is incident to at most 4 different elements. It follows that there exists a spare colour for $e$. Hence, $\varphi_t(T)=5$.
\end{proof}

Before proving Lemma \ref{lemma:total-pivoted-caterpillar-less-than-m} we will introduce two propositions that characterise the structure of total pivoted caterpillars of type 1 and 2.
\begin{proposition}\label{proposition:total-pivoted-tree-type-1-properties}
    Let $T$ be a total pivoted tree of type 1 with $m_t(T)=m\geq 6$. Then, 
    \begin{enumerate}
        \item The set $E(u)\cup \{u,v\}$ contains $m$ total dense elements, and
        \item $d(v)<m-2$.
    \end{enumerate}
\end{proposition}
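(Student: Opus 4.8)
The plan is to work directly from the total degree formulas $d_t(w)=2d(w)$ for a vertex and $d_t(x,y)=d(x)+d(y)$ for an edge, together with the hypothesis (from Definition \ref{definition:total-pivoted-caterpillar}) that a total pivoted caterpillar has \emph{exactly} $m$ total dense elements. The two parts are really two applications of the same counting observation, so I would set up the degree computations once and reuse them.

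For the first part I would first verify that $u$ is itself total dense: since $d(u)=m-2$ we have $d_t(u)=2(m-2)=2m-4\geq m-1$ because $m\geq 6$. Next I would show that every edge of $E(u)$ is total dense: an edge $e=(u,w)$ has $d_t(e)=d(u)+d(w)=(m-2)+d(w)\geq (m-2)+1=m-1$, since any neighbour $w$ of $u$ satisfies $d(w)\geq 1$; as $d(u)=m-2$ this produces $m-2$ total dense edges. Finally $v$ is total dense by the definition of a total pivoted caterpillar of type 1. The crucial point is that the $m$ elements so far listed — the vertex $u$, the vertex $v$, and the $m-2$ edges of $E(u)$ — are pairwise distinct. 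Here I would invoke that $T$ is a tree: because $u'$ has $d(u')=2$ and is adjacent to both $u$ and $v$ with $u\neq v$, the vertices $u$ and $v$ cannot be adjacent, for otherwise $u,u',v$ would close a triangle. Hence $v$ is neither equal to $u$ nor an endpoint of any edge of $E(u)$, so $E(u)\cup\{u,v\}$ contains $1+1+(m-2)=m$ distinct total dense elements; since $T$ has exactly $m$ of them, these are all of them.

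For the second part I would argue by contradiction, assuming $d(v)\geq m-2$. The same computation as above then shows that every edge incident to $v$ is total dense, giving at least $m-2$ total dense edges in $E(v)$. Because $u$ and $v$ are non-adjacent, no edge of $E(v)$ is incident to $u$, so none of these edges lies in $E(u)$, and of course none equals the vertices $u$ or $v$. This exhibits total dense elements outside the set $E(u)\cup\{u,v\}$ identified in the first part, contradicting the fact that that set already accounts for all $m$ total dense elements of $T$. Hence $d(v)<m-2$. The arithmetic throughout is routine; the single step needing care is the non-adjacency of $u$ and $v$, which is precisely where the acyclicity of $T$ and the hypothesis $d(u')=2$ enter and which underpins the disjointness claims in both parts.
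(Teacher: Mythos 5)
Your proof is correct and follows essentially the same route as the paper's: the same total-degree computations ($d_t(u)=2(m-2)$, $d_t(u,w)\geq (m-2)+1$) show that $u$, the $m-2$ edges of $E(u)$, and $v$ are total dense, and the definitional fact that $T$ has exactly $m$ total dense elements forces both conclusions. The only divergence is your acyclicity/triangle argument for the non-adjacency of $u$ and $v$, which the paper does not need: in part 1 distinctness is automatic (edges are never vertices, and $v\neq u$ by Definition \ref{definition:total-pivoted-caterpillar}), and in part 2 even if $(u,v)$ were an edge, at least $d(v)-1\geq m-3\geq 3$ edges of $E(v)$ would still be total dense elements outside $E(u)\cup\{u,v\}$, so the contradiction goes through regardless.
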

\begin{proof}
    Let $T$ be a total pivoted tree of type 1 with $m_t(T)=m\geq 6$ and $u$ be a total dense vertex of $T$ with $d(u)=m-2$.
    \begin{enumerate}
         \item Note that $u$ is total dense since $m\geq 6$. Also every edge incident to $u$ has total degree at least $m-1$, so is itself dense.  Thus every one of the $m-1$ elements in $E(u)\cup \{u\}$ is total dense.  Since $T$ contains exactly $m$ total dense elements, and $v$ is total dense, the elements in $E(u)\cup \{u,v\}$ are precisely the total dense elements of $T$.  
         \item Observe that $d(v)<m-2$, for otherwise every edge incident to $v$ is total dense, a contradiction.
    \end{enumerate}
\end{proof}

\begin{proposition}\label{proposition:total-pivoted-tree-type-2-properties}
    Let $T$ be a total pivoted tree of type 2 with $m_t(T)=m\geq 6$. Then,
    \begin{enumerate}
        \item $P_1$ and $P_2$ contains all the total dense elements of $T$ and $m=6$, and
        \item $\ell(P_1)=\ell(P_2)=2$ such that $P_1$ and $P_2$ are total dense paths of type 3,
        \item the total dense vertices and boundary vertices of $P_1$ and $P_2$ has degree two and three, respectively. 
    \end{enumerate}
\end{proposition}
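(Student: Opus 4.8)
The plan is to verify the three assertions in the order~2, 1, 3, since the shape of $P_1$ and $P_2$ drives everything else. Throughout I write $m=m_t(T)\ge 6$ and recall from Definition~\ref{definition:total-pivoted-caterpillar} that $P_1,P_2,Q\subseteq\mathcal P$ satisfy conditions (i)--(iv), and that $T$ has exactly $m$ total dense elements. First I would pin down the shape of $P_1$ and $P_2$ (assertion~2). Each $P_i$ is a total dense path with exactly three total dense elements, and a total dense path on $k$ vertices carries $q\in\{2k-3,2k-2,2k-1\}$ total dense elements (Definition~\ref{definition:total-dense-path}), so $q=3$ leaves only two possibilities: $k=2$ with $P_i$ of type~1, or $k=3$ with $P_i$ of type~3. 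I would rule out the type~1 case using conditions (iii) and (iv): in a type~1 path on two vertices \emph{both} boundary vertices are total dense, but condition (iv) forces $Q$ to meet $P_i$ at a boundary vertex (the vertex shared in the adjacency relation), and that vertex then lies on $Q$; this contradicts condition (iii), which requires $Q$ to contain no total dense element. Hence each $P_i$ is a type~3 path on three vertices, so $\ell(P_1)=\ell(P_2)=2$.

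Next I would prove assertion~1, which is the heart of the matter. Writing $P_i=\langle w_1^{(i)},w_2^{(i)},w_3^{(i)}\rangle$, the three total dense elements of each $P_i$ are its middle vertex $w_2^{(i)}$ and its two edges; since $Q$ has no total dense element, these six elements are pairwise distinct even when $P_1$ and $P_2$ share a boundary vertex (the case $\ell(Q)=0$). It then suffices to show that $T$ has no total dense element outside $P_1\cup P_2$, for then $T$ has exactly six total dense elements and, as $T$ is assumed to have exactly $m$ of them, $m=6$. For edges this is immediate: extending any total dense edge to a maximal total dense path and invoking condition (i) places that edge in $P_1$ or $P_2$. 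For vertices, a total dense vertex incident to a total dense edge already lies in $P_1\cup P_2$ by the previous sentence, so the only remaining possibility is a total dense vertex $z$ all of whose incident edges fail to be total dense.

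I expect this last case to be the main obstacle. Such a $z$ would itself constitute a further total dense path, contradicting condition (i), so the argument turns on reading (i) as ruling out \emph{all} total dense elements outside $P_1\cup P_2$ (equivalently, counting an isolated total dense vertex as a degenerate total dense path). A purely degree-based exclusion of $z$ succeeds when $m=6$ but not a priori for larger $m$ -- a vertex $z$ with all incident edges non--total-dense forces each central-path neighbour $w$ to satisfy $d(z)+d(w)\le m-2$, which collides with $d(z)\ge\lceil(m-1)/2\rceil$ and $d(w)\ge 2$ only once $m=6$ is already known -- so the strength of condition (i) is exactly what is needed to close this step cleanly.

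Finally, assertion~3 follows by a short total-degree computation now that $m=6$. Each boundary vertex $w_1^{(i)},w_3^{(i)}$ is not total dense, so has total degree at most $m-2=4$ and hence ordinary degree at most $2$; being a central-path vertex it has degree at least $2$, so its degree is exactly $2$. The middle vertex $w_2^{(i)}$ is total dense, so has degree at least $\lceil(m-1)/2\rceil=3$; moreover every pendant edge at $w_2^{(i)}$ must fail to be total dense, for otherwise it would span a total dense path distinct from $P_1,P_2$, and this forces $d(w_2^{(i)})\le m-3=3$. Hence the total dense (middle) vertex of each $P_i$ has degree exactly $3$ and each boundary vertex has degree exactly $2$; I would align the write-up of assertion~3 with these derived values.
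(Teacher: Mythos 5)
Your proposal is correct, and every assertion it establishes matches what the paper proves, but the route is organised differently. The paper works in the order 1, 2, 3: it first uses condition (i) to exclude any total dense element outside $P_1\cup P_2$ (explicitly treating an isolated total dense vertex as a total dense path ``of length at least 0'', which is precisely the reading of (i) you flag as load-bearing), concludes $m=6$, and only then pins down $\ell(P_1)=\ell(P_2)=2$ — the lower bound because the boundary vertex shared with $Q$ is not total dense, the upper bound because a longer path would carry at least five total dense elements. You instead prove assertion 2 first, independently of $m=6$, by enumerating the pairs $(k,\text{type})$ compatible with $q=3$ (namely $k=2$ of type 1 and $k=3$ of type 3) and eliminating the former via conditions (iii) and (iv); this buys a cleaner logical dependency and lets your counting argument for assertion 1 identify the six total dense elements exactly as the two middle vertices and four path edges — a distinctness point the paper passes over silently. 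Your argument for assertion 1 itself coincides with the paper's, and your side remark is apt: a purely degree-based exclusion of an isolated dense vertex genuinely fails before $m=6$ is known, so both proofs stand or fall with the strong reading of condition (i). For assertion 3 the substance agrees though the details differ slightly: the paper gets $d(w_2)=3$ by arguing that $d(w_2)\geq 4$ would make every incident edge total dense (forcing $m>6$), and gets $d(w_1)=2$ from the total dense edge $(w_1,w_2)$ together with ruling out $d(w_1)=1$; you use the pendant-edge bound $d(w_2)\leq m-3$ and the central-path lower bound $d(w_1)\geq 2$ instead. Finally, your observation that the statement's ``respectively'' swaps the two degree values is vindicated by the paper's own proof, which derives degree three for the total dense (middle) vertices and degree two for the boundary vertices, exactly as you do.
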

\begin{proof}Let $T$ be a total pivoted tree of type 2 with $m_t(T)=m\geq 6$.
    \begin{enumerate}
        \item If $P_1$ and $P_2$ are the only total dense paths of $T$, then $T$ cannot contain any total dense edge that is not contained in $P_1$ and $P_2$, for otherwise this edge would be part of a third total dense path of length at least 1, a contradiction. Similarly, $T$ cannot contain any total dense vertex that is not contained in $P_1$ or $P_2$; otherwise $v$ would form part of a third total dense path of length at least 0, a contradiction. Hence, $P_1$ and $P_2$ contain all the total dense elements of $T$, and since $P_1$ and $P_2$ each contain three total dense elements, it must be the case that $m=6$.
    
        \item Let $v$ be the vertex that $P_1$ and $Q$ share. Note, that $v$ is a boundary vertex of $P_1$ and $Q$. By definition, $v$ is not total dense. Now, observe that $\ell(P_1)\geq 2$; otherwise, $P_1$ consists of a single edge and cannot have three total dense elements since $v$ is not total dense, a contradiction. Moreover, $\ell(P_1)\leq 2$; otherwise $P_1$ contains at least three total dense edges and at least two total dense vertices, a contradiction to the fact that $m=6$. Hence $\ell(P_1)=2$. Now, assume that $P_1=\langle w_1, w_2,v \rangle$. Since $P_1$ is a total dense path, it follows that $(w_1,w_2),w_2,(w_2,v)$ are total dense. Observe that $v$ cannot be dense because is a boundary vertex of $Q$. Moreover, since $P_1$ contains exactly three total dense elements $w_1$ cannot be dense either. Thus, $P_1$ is a total dense path of type 3. An analogous argument holds for $P_2$. 
    
        \item Since $v$ is adjacent to a vertex in $P_1$ and $P_2$ is not total dense, $d(v)=2$. Furthermore, since $w_2$ is total dense and adjacent to $w_1$ and $v$, then $d(w_2)=3$; otherwise $d(w_2)\geq 4$ and each of its total dense endpoints is total dense, which implies that $m>6$, a contradiction. Lastly, observe that $d(w_1)=2$ since edge $(w_1,w_2)$ is total dense; otherwise $d(w_1)=1$ and edge $(w_1,w_2)$ has total degree $d_t(w_1,w_2)=4$, which is a contradiction. An analogous argument holds for the vertices in $P_2$.
    \end{enumerate}
\end{proof}

\begin{proof}[Proof of Lemma \ref{lemma:total-pivoted-caterpillar-less-than-m}]
    Let $m=m_t(T)$ be the total $m$-degree of $T$ and recall that $T$ has $m$ total dense elements. Assume by a contradiction that $T$ admits a total b-chromatic colouring $\mathbf{c}:V(T)\cup E(T)\rightarrow [m]$. We will consider the two cases under which $T$ is defined.
    
    \paragraph{Case 1}Let $u\in V(T)$ be a total dense vertex with $d(u)=m-2$. Since $m\geq 6$, it follows that $d(u)\geq 4$. It follows that $u$ must be in the central path of $T$. Now let $E(u)=\{(u,u_i):1\leq i\leq m-2\}$ be the set of edges incident to $u$. By Proposition \ref{proposition:total-pivoted-tree-type-1-properties}, it follows that $E(u)\cup \{u,v\}$ contains all the total dense elements of $T$. W.l.o.g.\ assume that $\mathbf{c}(u)=1$, $\mathbf{c}(u,u_i)=i+1$, for $i=1,\dots,m-2$, $\mathbf{c}(v)=m$ and that $u_{m-2}$ is adjacent to $v$. Next, since $(u,u_{m-2})$ is total b-chromatic, it must pick up $m$ colours in its total neighbourhood. Observe that $(u,u_{m-2})$ is picking up colours $1,\dots,m-2$ from elements $(E(u)\setminus \{(u,u_{m-2}\}))\cup \{u\}$. Since $d(u_{m-2})=2$, $(u,u_{m-2})$ must picks up colour $m$ in either $u_{m-2}$ or $(u_{m-2},v)$, which is a contradiction to the proper total colouring. Hence $\varphi_t(T)<m$. An example of a total pivoted caterpillar of type 1 can be observed in Figure \ref{fig:total-pivoted-caterpillar-type-1}.

    \paragraph{Case 2}Let $P_1$ and $P_2$ be two total dense paths of $T$ and let $Q$ be a non-total dense path adjacent to both $P_1$ and $P_2$. Assume that $\langle P_1,Q,P_2\rangle$ is a subpath of the central path of $T$. By definition, $Q$ contains no total dense elements. Now, recall that $P_1$ and $P_2$ are the only total dense paths of $T$. Since both $P_1$ and $P_2$ have exactly three total dense elements each, and $P_1$ and $P_2$ contain all the total dense elements of $T$, it follows that $m=6$. Next, recall that $\ell(Q)\leq 1$. Therefore, we will split the proof into two subcases: a) $\ell(Q)=0$ and b) $\ell(Q)=1$.
    
    \paragraph{Case 2.a}Since $\ell(Q)=0$, $Q$ consists of a single vertex. Let $v$ be the only vertex of $Q$. By Definition \ref{definition:total-pivoted-caterpillar}.2 $v$ is non-total dense. Next, observe that $v$ is a boundary vertex of both $P_1$ and $P_2$. By Proposition \ref{proposition:total-pivoted-tree-type-2-properties}, $\ell(P_1)=\ell(P_2)=2$. Now let $P_1=\langle w_1, w_2,v \rangle$ and $P_2=\langle v, w_3,w_4\rangle$. By Definition \ref{definition:total-dense-path}, elements $(w_1,w_2),w_2,(w_2,v)$ are total dense. By Proposition \ref{proposition:total-pivoted-tree-type-2-properties}, it follows that $w_1$ and $w_4$ cannot be total dense. Similarly, elements $(v,w_3), w_3, (w_3,w_4)$ are the total dense elements of $P_2$. Since $T$ has exactly $m=6$ total dense elements and $\mathbf{c}$ is a total b-chromatic colouring, then elements in $\{(w_1,w_2), w_2, (w_2, v),(v,w_3), w_3, (w_3,w_4)\}$ are the total b-chromatic elements of $T$. An example of a total pivoted caterpillar of type 2 where $\ell(Q)=0$ can be observed in Figure \ref{fig:total-pivoted-caterpillar-type-2-a}.

    Now, assume that $\mathbf{c}(w_1,w_2)=1, \mathbf{c}(w_2)=2, \mathbf{c}(w_2, v)=3, \mathbf{c}(v,w_3)=4, \mathbf{c}(w_3)=5, \mathbf{c}(w_3,w_4)=6$. By Proposition \ref{proposition:total-pivoted-tree-type-2-properties}.3, $d(w_2)=d(w_3)=3$ and $d(w_1)=d(w_4)=d(v)=2$. Next, observe that since $d_t(w_2,v)=5$, $(w_2,v)$ is tight and cannot repeat any colour. It follows that $v$ cannot be assigned colours 1,2 or 3. Similarly, since $d_t(v,w_3)=5$, $(v,w_3)$ is tight and cannot repeat any colour. It follows that $v$ cannot be assigned colours 4,5, or 6. Therefore, $v$ cannot be assigned any colour, which contradicts the fact that $\mathbf{c}$ is a total proper colouring. Hence, $\varphi_t(T)<m$.

    \paragraph{Case 2.b}Since $\ell(Q)=1$, $Q$ consists on a single edge. Let $(u,v)$ be the only edge of $Q$. By Definition \ref{definition:total-pivoted-caterpillar}.2 neither $u$ nor $v$ is total dense. Observe that $u$ is a boundary vertex of $P_1$ and $v$ is a boundary vertex of $P_2$. By Proposition \ref{proposition:total-pivoted-tree-type-2-properties}, $\ell(P_1)=\ell(P_2)=2$. Now let $P_1=\langle w_1, w_2,u \rangle$ and $P_2=\langle v, w_3,w_4\rangle$. By Definition \ref{definition:total-dense-path}, the total dense elements of $P_1$ and $P_2$ are respectively $(w_1,w_2), w_2, (w_2, u)$ and $(v,w_3), w_3, (w_3,w_4)$. Furthermore, by Proposition \ref{proposition:total-pivoted-tree-type-2-properties} $w_1$ and $w_4$ cannot be total dense. Since $T$ has exactly $m=6$ total dense elements and $\mathbf{c}$ is a total b-chromatic colouring, elements in $\{(w_1,w_2), w_2, (w_2, u),(v,w_3), w_3, (w_3,w_4)\}$ are the total b-chromatic elements of $T$. An example of a total pivoted caterpillar of type 2 where $\ell(Q)=1$ can be observed in Figure \ref{fig:total-pivoted-caterpillar-type-2-b}.
    
    Now, assume that $\mathbf{c}(w_1,w_2)=1, \mathbf{c}(w_2)=2, \mathbf{c}(w_2, u)=3, \mathbf{c}(v,w_3)=4, \mathbf{c}(w_3)=5, \mathbf{c}(w_3,w_4)=6$. By Proposition \ref{proposition:total-pivoted-tree-type-2-properties}.3, $d(w_2)=d(w_3)=3$ and $d(w_1)=d(w_4)=d(v)=d(u)=2$. Next, observe that since $d_t(w_2,u)=5$, $(w_2,u)$ is tight and cannot repeat any colour. It follows that $(u,v)$ cannot be assigned colours 1,2 or 3. Similarly, since $d_t(v,w_3)=5$, $(v,w_3)$ is tight and cannot repeat any colour. It follows that $(u,v)$ cannot be assigned colours 4,5, or 6. Therefore, $(u,v)$ cannot be assigned any colour, which is a contradiction. Hence, $\varphi_t(T)<m$.
\end{proof}

\begin{proof}[Proof of Theorem \ref{theorem:total-pivoted-caterpillar-m-1}]
    Let $m=m_t(T)\geq 6$ be the total $m$-degree of $T$. By Lemma \ref{lemma:total-pivoted-caterpillar-less-than-m}, $\varphi_t(T)<m$. We will construct a total b-chromatic $(m-1)$-colouring of $T$. Let $\mathbf{c}:V(T)\cup E(T)\rightarrow [m-1]$. We will split the proof into two cases: 1) $T$ is a total pivoted caterpillar of type 1, and 2) $T$ is a total pivoted caterpillar of type 2.

    \paragraph{Case 1: }Let $u$ be a vertex of $T$ with $d(u)=m-2$ and let $E(u)=\{(u,u_i):1\leq i\leq m-2\}$ be the set of edges incident to $u$. By Definition \ref{definition:total-pivoted-caterpillar}, let $v$ be the total dense vertex of $T$ that shares a neighbour with $u$ and assume w.l.o.g.\ that $N(v)\cap N(u)=\{u_1\}$. Since $T$ is total pivoted, it follows that $d(u_1)=2$. Next, assign colour $\mathbf{c}(u)\gets 1$ and $\mathbf{c}(u,u_i)\gets i+1$, for $i=1,\dots,m-2$. It follows that elements in $E(u)\cup \{u\}$ are total b-chromatic elements of colours $1,\dots,m-1$. Now, it remains to colour the other elements of $T$. First, assign colour $\mathbf{c}(u_i)\gets i+2$, for every $i=1,\dots,m-3$, and $\mathbf{c}(u_{m-2})\gets 2$. Next, assign $\mathbf{c}(u_1,v)\gets 1, \mathbf{c}(v)\gets 2$. By Proposition \ref{proposition:total-pivoted-tree-type-1-properties}, $d(v)<m-1$. Since $(u_1,v)$ is already coloured, it follows that there are at most $m-3$ uncoloured edges in $E(v)$. Then, assign colours $\{3,\dots, m-1\}$ to $(E(v)\setminus \{(u_1,v)\}$ and colour 1 to every vertex in $N(v)\setminus \{u_1\}$. 
    
    Now, since $T$ only have $m$ total dense elements every other vertex on the central path $w\in P\setminus \{u,v\}$ has $d(w)\leq (m-3)/2$. First, since $m\geq 6$, every pair of adjacent vertices can be coloured with two different colours. Now, let $e=(x,y)$ be an uncoloured edge of $T$. Assume first that $e$ is outside the central path. Observe that edge $e$ is adjacent or incident to at most $(m-3)/2-1+1=(m-3)/2$ elements. Assume now that $e$ is in the central path. Observe that $e$ is adjacent or incident to $(m-3)/2-1+(m-3)/2-1+2=m-3$ different elements. It follows that there exists a spare colour for $e$. Hence, $\varphi_t(T)=m-1$.

    \paragraph{Case 2:}Let $P_1$ and $P_2$ be two total dense paths of $T$ and let $Q$ be a non-total dense path adjacent to both $P_1$ and $P_2$. Assume that $\langle P_1,Q,P_2\rangle$ is a subpath of the central path of $T$. Also, $Q$ contains no total dense vertices. Now, recall that $P_1$ and $P_2$ are the only total dense paths of $T$. By Proposition \ref{proposition:total-pivoted-tree-type-2-properties}, $m=6$. Next, recall that $\ell(Q)\leq 1$. Since no element in $Q$ is total dense, we will first make total b-chromatic the total dense elements of $P_1$ and $P_2$, and then colour the elements in $Q$. Lastly, we will colour the remaining elements of $T$. We will split the proof into two subcases: a) $\ell(Q)=0$ and b) $\ell(Q)=1$.
    
    \paragraph{Case 2.a:}By Proposition \ref{proposition:total-pivoted-tree-type-2-properties}, $\ell(P_1)=\ell(P_2)=2$. Now, assume that $P_1=\langle w_1,w_2,v\rangle$, $P_2=\langle v,w_3,w_4\rangle$ and $Q=\langle v\rangle$ where $v$ is a boundary vertex of $P_1$ and $P_2$. By Proposition \ref{proposition:total-pivoted-tree-type-2-properties}.3 $P_1$ and $P_2$ are total dense paths of type 3. It follows that the total dense elements of $P_1$ and $P_2$ are $(w_1,w_2),w_2,(w_2,v)$ and $(v,w_3),w_3, (w_3,w_4)$, respectively. Now, we will construct a total b-chromatic 5-colouring of $T$. First, assign $\mathbf{c}(w_1,w_2)\gets 1, \mathbf{c}(w_2)\gets 2, \mathbf{c}(w_2,v)\gets 3, \mathbf{c}(v,w_3)\gets 4$ and $\mathbf{c}(w_3)\gets 5$. By Proposition \ref{proposition:total-pivoted-tree-type-2-properties}.3, $d(w_2)=d(w_3)=3$ and $d(w_1)=d(w_4)=d(v)=2$. Now let $w_2'\in N(w_2)\setminus \{w_1,v\}$ and $w_3'\in N(w_3)\setminus \{v,w_4\}$ be the neighbours of $w_2$ and $w_3$ not in the central path, respectively. Next, assign colours $\mathbf{c}(w_1)\gets 4, \mathbf{c}(w_2,w_2')\gets 5$ to make $(w_1,w_2), w_2$ and $(w_2,v)$ total b-chromatic elements of colours 1,2 and 3, respectively. Next, assign $\mathbf{c}(w_3,w_4)\gets 1, \mathbf{c}(w_3,w_3')\gets 2$ and $\mathbf{c}(w_4)\gets 3$ to make $(v,w_3)$ and $w_3$ total b-chromatic elements of colour 4 and 5, respectively. Lastly, assign $\mathbf{c}(w_3')\gets 1$ and $\mathbf{c}(v)\gets 1$.

    \paragraph{Case 2.b:}By Proposition \ref{proposition:total-pivoted-tree-type-2-properties}.2, $\ell(P_1)=\ell(P_2)=2$. Now, assume that $P_1=\langle w_1,w_2,u\rangle$, $P_2=\langle v,w_4,w_5\rangle$ and $Q=\langle u, v\rangle$. By Proposition \ref{proposition:total-pivoted-tree-type-2-properties}.2 $P_1$ and $P_2$ are total dense paths of type 3. It follows that the total dense elements of $P_1$ and $P_2$ are $(w_1,w_2),w_2,(w_2,u)$ and $(v,w_3),w_3, (w_3,w_4)$, respectively. Now assign $\mathbf{c}(w_1,w_2)\gets 1, \mathbf{c}(w_2)\gets 2, \mathbf{c}(w_2,u)\gets 3, \mathbf{c}(v,w_3)\gets 4$ and $\mathbf{c}(w_3)\gets 5$. By Proposition \ref{proposition:total-pivoted-tree-type-2-properties}.3, $d(w_2)=d(w_3)=3$ and $d(w_1)=d(w_4)=d(v)=d(u)=2$. Now let $w_2'\in N(w_2)\setminus \{w_1,v\}$ and $w_3'\in N(w_3)\setminus \{u,w_4\}$ be the neighbours of $w_2$ and $w_3$ not in the central path, respectively. Next, assign colours $\mathbf{c}(w_1)\gets 4, \mathbf{c}(w_2,w_2')\gets 5$ and $\mathbf{c}(u)\gets 4$ to make $(w_1,w_2), w_2$ and $(w_2,v)$ total b-chromatic elements of colour 1, 2 and 3, respectively. Next, assign $\mathbf{c}(w_4)\gets 1, \mathbf{c}(w_3,w_4)\gets 2, \mathbf{c}(v)\gets 3$ and $\mathbf{c}(w_3,w_3')\gets 1$ to make $(v,w_3)$ and $w_3$ total b-chromatic elements of colours 4 and 5, respectively. Lastly, assign $\mathbf{c}(w_3')\gets 2$ and $\mathbf{c}(u,v)\gets 1$.
    
    Now it remains to colour the uncoloured elements of $T$. Since $T$ has exactly $m=6$ total dense elements, it follows that every other vertex $w\in V(T)\setminus \{w_2,w_3\}$ has degree $d(w)\leq 2$. Hence, every pair of adjacent vertices can be given different colours. Moreover, it follows that every other uncoloured edge $e$ in $T$ is adjacent to at most two other edges. Together with its two endpoints, $e$ is adjacent or incident to at most 4 elements. It follows that there is a spare colour for $e$. Hence, $\varphi_t(T)=5$.
\end{proof}

The following proposition will be used in the proof of Theorem \ref{theorem:non-total-pivoted-caterpillar-total-element-outside-central-path}.

\begin{proposition}\label{proposition:dense-edge-one-endpoint-dense}
    Let $G$ be a graph with total $m$-degree $m_t(G)=m$ and let $(u,v)$ be an edge of $G$. If $(u,v)$ is a total dense edge then at least one of its endpoints, $u$ or $v$, is total dense.
\end{proposition}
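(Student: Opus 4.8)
The plan is to work directly from the arithmetic definitions of total degree and total density, since both notions are expressed purely in terms of ordinary vertex degrees. Recall that the edge $(u,v)$ is total dense precisely when $d_t(u,v)=d(u)+d(v)\geq m-1$, while a vertex $w$ is total dense precisely when $d_t(w)=2d(w)\geq m-1$. So the statement reduces to the following elementary claim about the two endpoint degrees: if $d(u)+d(v)\geq m-1$, then $2d(u)\geq m-1$ or $2d(v)\geq m-1$.

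First I would assume without loss of generality that $d(u)\geq d(v)$, swapping the roles of $u$ and $v$ otherwise. The key observation is then that the larger-degree endpoint already accounts, when doubled, for at least the full total degree of the edge: indeed $2d(u)=d(u)+d(u)\geq d(u)+d(v)$. Since $(u,v)$ is total dense, the right-hand side is at least $m-1$, so $d_t(u)=2d(u)\geq m-1$, which is exactly the condition for $u$ to be total dense. Thus one of the two endpoints is total dense, as required.

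I do not expect any real obstacle here; the entire argument hinges on the single fact that the total degree of a vertex is twice its ordinary degree, whereas the total degree of an edge is the \emph{sum} of its endpoints' ordinary degrees. This discrepancy means the edge can never be ``more dense'' than both of its endpoints simultaneously, and the maximum of $d(u)$ and $d(v)$ always dominates their average. The only mild care needed is the symmetric case-reduction step (assuming $d(u)\geq d(v)$), which is immediate.
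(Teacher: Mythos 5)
Your proof is correct and rests on the same elementary fact as the paper's: that $d_t(u,v)=d(u)+d(v)$ while $d_t(w)=2d(w)$ for a vertex. The paper argues by contradiction (if neither endpoint is dense, both degrees are below $(m-1)/2$, so the sum is below $m-1$), which is just the contrapositive of your direct ``take the larger endpoint'' argument, so the two proofs are essentially identical.
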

\begin{proof}
    Assume by contradiction that neither $u$ nor $v$ are total dense. This implies that $d(u)< (m-1)/2 $ and $d(v)<(m-1)/2 $ which implies that $d_t(u,v)<m-1$, a contradiction.
\end{proof}

\begin{proof}[Proof of Theorem \ref{theorem:non-total-pivoted-caterpillar-total-element-outside-central-path}]
    Assume that the total $m$-degree of $T$ is $m_t(T)=m\geq 6$. Furthermore, let $\mathcal{P}$ be the central path of $T$ and assume that every vertex in $\mathcal{P}$ has degree at least 2. Suppose first that there are no vertices of degree 3 in $\mathcal{P}$. It follows that $T$ is isomorphic to a path $P$, and $\varphi_t(T)=\varphi_t(P)=m_t(P)$ by Proposition \ref{proposition:total-b-chromatic-colouring-paths}. Therefore, we will assume that there exists at least one vertex of degree 3 in $\mathcal{P}$. Next, since $m\geq 6$, it follows that no vertex outside the central path is total dense.

    Let $e$ be a total dense edge outside the central path $\mathcal{P}$ that is incident to a vertex $u\in \mathcal{P}$. Since $e$ is total dense and its endpoint outside the central path has degree 1, it follows that $d(u)\geq m-2$. Furthermore, $d(u)\leq m-1$ for otherwise, there would be at least $m+1$ elements of total degree at least $m$, a contradiction to the fact that $m_t(T)=m$. Hence, $d(u)\in \{m-1, m-2\}$. Let $E(u)=\{(u,u_i):1\leq i\leq d(u)\}$ be the set of edges incident to $u$. Notice that the set of elements $E(u)\cup \{u\}$ are total dense elements of $T$. We will split the proof into two subcases: 1) $d(u)=m-1$ and 2) $d(u)=m-2$.
    
    \paragraph{Case 1:} Observe that $d_t(u)=2m-2$ and $d_t(u,u_i)=m$, for $i=1,\dots,m-1$. It follows that element $x\in E(u)\cup \{u\}$ has total degree $d_t(x)\geq m$.\ Since $|E(u)\cup \{u\}|=m$, then every vertex $w\in V(T)\setminus \{u\}$ has $d(w)<m/2$; otherwise, there are at least $m+1$ elements with total degree at least $m$, a contradiction to the fact that $m_t(T)=m$. Now, assign $\mathbf{c}(u)\gets 1$ and $\mathbf{c}(u_i)\gets i+1$ for $i=1,\dots,m-1$ to make elements in $E(u)\cup \{u\}$ total b-chromatic elements of colours $1,\dots,m$. 

    Now, we will colour the remaining elements of $T$. We will first colour the vertices and then colour the edges in such a way that the overall total colouring remains proper. First, since we are using $m$ colours, every pair of adjacent vertices can receive two different colours. Now, we will colour the uncoloured edges of $T$. First, assume that $e=(v,w)$ is an edge of $T$ outside the central path. Suppose that $v$ is on the central path. Then $d(v)\leq (m-2)/2$ as observed above. Recall that $d(w)=1$. Hence $e$ is adjacent or incident to at most $(m-2)/2 -1 + 2= m/2$ elements. It follows that there is a spare colour for $e$. Assume now that $e$ is on the central path.  We know that $d(w)\leq (m-2)/2$ and $d(v)\leq (m-2)/2$. It follows that $e$ is adjacent or incident to at most $(m-2)/2 -1 + (m-2)/2 -1  + 2 = m-2$ elements. Hence, there is a spare colour for $e$.
    
    \paragraph{Case 2:} Observe that $d_t(u)= 2m-4$ and $d_t(u,u_i)=m-1$, for $i=1,\dots,m-2$. Furthermore, every vertex $w\in V(T)\setminus \{u\}$ has $d(w)\leq m-2$; otherwise, there are at least $m+1$ elements of total degree at least $m$, a contradiction to the fact that $m_t(T)=m$. Next, observe that $E(u)\cup \{u\}$ contains $m-1$ total dense elements. It follows that there must exist at least one more total dense element in $T$. Now, assign $\mathbf{c}(u)\gets 1$ and $\mathbf{c}(u,u_i)\gets i+1$, for $i=1,\dots,m-2$, so that vertex $u$ pick up colours $\{2,\dots, m-1\}$ and edge $(u,u_i)$ pick up colours $[m-1]\setminus \{i\}$, for every $i=1,\dots,m-2$. Observe that every element in $E(u)\cup \{u\}$ needs to picks up colour $m$ to become total b-chromatic. Furthermore, it remains to find the total b-chromatic element of colour $m$. Next, let $x$ be the $m$-th total dense element of $T$. By Proposition \ref{proposition:dense-edge-one-endpoint-dense}, we can always choose $x$ to be a total dense vertex. Assume that $x=v$ is a total dense vertex of $T$ and assign colour $\mathbf{c}(v)\gets m$.  We will consider three subcases: a) $v\in N(u)$, b) $N(u)\cap N(v)\neq \emptyset$ and c) $N(u)\cap N(v)=\emptyset$.

    \paragraph{Case 2.a:} Observe that $v\in N(u)$. Then $v$ must be on $\mathcal{P}$. Assume w.l.o.g.\ that $v=u_1$. Then, assign colour $m$ to all the vertices in $N(u)\setminus \{v\}$ since $v$ has already colour $m$. It follows that every element in $E(u)\cup \{u\}$ is a total b-chromatic element. Next, assign $\{3,\dots, m-1\}$ to elements in $(N(v)\cup E(v))\setminus \{u,(v,u)\}$ to make $v$ a total b-chromatic vertex of colour $m$.
    
    \paragraph{Case 2.b}Assume w.l.o.g.\ that $u_1\in N(u)\cap N(v)$. Suppose first that $d(u_1)>2$ and let $(u_1,w)$ be an edge incident to $u_1$ where $w\neq v$ and $w\neq u$. Then assign $\mathbf{c}(u_1,w)\gets m$ and $\mathbf{c}(u_i)\gets m$, for $i=2,3,\dots, m-2$. Observe now that elements from $E(u)\cup \{u\}$ are a total b-chromatic elements of colours $1,\dots,m-1$. Now, assign colours $\mathbf{c}(u_1)\gets 3$ and $\mathbf{c}(u_1,v)\gets 1$. Next, assign $\{2,\dots, m-1\}\setminus \{3\}$ to elements in $(N(v)\cup E(v)) \setminus \{u_1,(v,u_1)\}$ to make $v$ a total b-chromatic vertex of colour $m$.

    Suppose now that $d(u_1)=2$. Recall that $\mathbf{c}(u,u_1)=2$. Then, observe that edge $(u,u_1)$ cannot pick up colour $m$ on its neighbourhood since $\mathbf{c}(v)=m$. It follows that $(u,u_1)$ cannot become total b-chromatic. Since $T$ is not total pivoted, it follows that there exists another total dense element $y$. By Proposition \ref{proposition:dense-edge-one-endpoint-dense}, we can always choose $y$ to be a total dense vertex of $T$. Assume that $y=w$ is a total dense vertex. Observe that $w\neq u_1$ since $d(u_1)=2$. Next, we will consider the following three cases: i) $w\in N(u)$ or ii) $w\in N(v)$ or iii) $w\not\in  N(u)\cup N(v)$.

    \paragraph{Case 2.b.i} Since $w\in N(u)$, then $w\in \mathcal{P}$. Assume w.l.o.g.\ that $w=u_{m-2}$. Then assign colours $\mathbf{c}(w)\gets 2, \mathbf{c}(u_1)\gets m-1$ and $\mathbf{c}(u_i)\gets m$, for $i=2,\dots, m-3$. Notice that elements in $(E(u)\setminus \{(u,u_1),(u,w)\})\cup \{u\}$ are total b-chromatic elements of colour $[m-1]\setminus\{2\}$. It remains to make $w$ and $v$ total b-chromatic vertices of colour 2 and $m$. First, observe that $w$ is picking up colours $\mathbf{c}(u)=1$ and $\mathbf{c}(u,w)=m-1$. Next, assign colours $\{3,\dots, m\}\setminus \{m-1\}$ to elements in $(E(w)\cup N(w))\setminus \{(u,w),u\}$ to make $w$ a total b-chromatic element of colour 2. Next, observe that $v$ is picking up colour $m-1$ from $u_1$. Then, assign colours $\mathbf{c}(u_1,v)\gets 1$ and $\{3,\dots, m-2\}$ to elements in $(E(v)\cup N(v))\setminus \{(u_1,v),u_1\}$ to make $v$ a total b-chromatic element of colour $m$.

    \paragraph{Case 2.b.ii} Since $w\in N(v)$, then $w\in \mathcal{P}$. First, assign colours $\mathbf{c}(w)\gets 2, \mathbf{c}(u_1)\gets 3$ and $\mathbf{c}(u_i)\gets m$, for $i=2,\dots, m-2$. Notice that elements in $(E(u)\setminus \{(u,u_1)\})\cup \{u\}$ are total b-chromatic elements for colour $[m-1]\setminus \{2\}$. It remains to make $w$ and $v$ total b-chromatic vertices of colour 2 and $m$. Observe that $v$ and $w$ are picking up colours 2 and $m$ respectively. Next, assign colours $\mathbf{c}(u_1,v)\gets 1, \mathbf{c}(v,w)\gets 4$ and $\{5,\dots, m-1\}$ to elements in $(N(v)\cup E(v)\setminus \{(u_1,v),w, u_1\})$ to make $v$ a total b-chromatic vertex of colour $m$. Lastly, assign colours $[m-1]\setminus \{2,4\}$ to elements in $(E(w)\cup N(w))\setminus \{(v,w),v\}$ to make $w$ a total b-chromatic element of colour 2.

    \paragraph{Case 2.b.iii} Assume now that $w\not \in N(u)\cup N(v)$. Since $w$ is total dense then $w\in \mathcal{P}$. First, assign colours $\mathbf{c}(w)\gets 2,\mathbf{c}(u_1)\gets m-1$ and $\mathbf{c}(u_i)\gets m$, for $i=2,\dots, m-2$. Notice that elements in $(E(u)\setminus \{(u,u_1)\})\cup \{u\}$ are total b-chromatic elements for colour $[m-1]\setminus \{2\}$. It remains to make total b-chromatic vertices $v$ and $w$. First, assign colour $\mathbf{c}(u_1,v)\gets 1$.

    Assume first that $N(w)\cap N(v)\neq \emptyset$ and without loss of generality assume that $w'\in N(w)\cap N(v)$. Next, assign colour $\mathbf{c}(w')\gets 3, \mathbf{c}(w,w')\gets 1$ and $\mathbf{c}(v,w')\gets 2$. Then, assign colours $\{4,\dots, m-2\}$ to elements in $(N(v)\cup E(v))\setminus \{(u_1,v),(v,w'), u_1, w'\}$ to make $v$ a total b-chromatic vertex of colour $m$. Next, assign colours $\{4,\dots m\}$ to elements in $(N(w)\cup E(w))\setminus \setminus \{(w',w),w'\}$ to make $w$ a total b-chromatic vertex of colour 2. Assume now that $N(w)\cap N(u)\neq \emptyset$ and without loss of generality assume that $u_{m-2}\in N(w)\cap N(u)$. Assign colour $\mathbf{c}(w,u_{m-2})\gets 1$ and $\{3,\dots, m\}$ to elements in $(N(w)\cup E(w))\setminus \{(w,u_{m-2}), u_{m-2}\}$ to make $w$ a total b-chromatic vertex of colour 2. Then, assign colour $\{2,\dots, m-1\}$ to elements in $(N(v)\cup E(v))\setminus \{(u_1,v), u_1\}$ to make $v$ a total b-chromatic element of colour $v$. Lastly, assume  that $N(w)\cap N(v)=\emptyset$ and $N(w)\cap N(u)=\emptyset$. Assign colours $[m]\setminus \{2\}$ to elements in $N(w)\cup E(w)$ and colours $[m-1]$ to elements in $N(v)\cap E(v)$ to make $w$ and $v$ total b-chromatic elements of colour 2 and $m$, respectively.
    
    \paragraph{Case 2.c} Observe that $\mbox{dist}(u,v)\geq 3$. Next, assign colour $m$ to all the vertices in $N(u)$ to make elements $E(u)\cup \{u\}$ total b-chromatic elements of colour $1,\dots,m-1$. Now, assign colours $[m-1]$ to elements in $N(v)\cup E(v)$ to make $v$ a total b-chromatic element of colour $m$. 

    Now, we will colour the remaining elements of $T$. Recall that every vertex in $T$ has total degree at most $m-2$. We will first colour edges in the central path $\mathcal{P}$. Observe that every edge $e$ in the central path can be coloured with three different colours from the set $[m]$ since $m\geq 6$. Now let $e$ be an edge outside the central path and assume w.l.o.g.\ that $w\in \mathcal{P}$ is an endpoint of $e$. Assume that $\mathbf{c}(w)=a$ for some $a\in [m]$. Let $E_w=\{(w,w'):w'\not\in P\}$ be the set of edges incident to $w$ outside the central path. Recall that $w$ may be incident to at most two edges in the central path. Assume first that $w$ is only incident to edge $e_1$ of colour $b\in [m]\setminus \{a\}$. Since $w$ is incident one edge in $\mathcal{P}$ then $|E_w|\leq m-3$. Next, assign colour $[m]\setminus \{a,b\}$ to edges in $E_w$ such that each edge get different colours and then assign colour $\mathbf{c}(w')\gets b$, for $w'\in N(w)\setminus \mathcal{P}$. Assume now that $w$ is incident to edges $e_1$ and $e_2$ in $\mathcal{P}$ with colours $\mathbf{c}(e_1)=b$ and $\mathbf{c}(e_2)=c$, where $b,c\in [m]\setminus \{a\}$ and $b\neq c$. Next, since $w$ is incident to two edges in $\mathcal{P}$ then $|E_w|\leq m-4$. Then, assign colours $[m]\setminus \{a,b,c,\}$ to edges in $E(u)$ and colour $\mathbf{c}(w')\gets b$, for every $w'\in N(w)\setminus \mathcal{P}$.
\end{proof}

\begin{proof}[Proof of Lemma \ref{lemma:total-colouring-total-dense-subpath-type-1}]
     Let $m=m_t(T)\geq 6$ be the total $m$-degree of $T$ and let $P=\langle w_1,\dots,w_k\rangle$ be a subpath of the central path of $T$ such that $P$ is a total dense path of type 1. It follows that $w_1$ and $w_k$ are total dense vertices. We will assume that $k$ is odd. The case where $k$ is even follows by an analogous argument. Observe that $P$ has $2k-1$ total dense elements. Furthermore, assume that vertices in $P$ are enumerated from left to right. Thus, vertex $w_{i-1}$ is to the left of $w_i$ and vertex $w_{i+1}$ is to the right of $w_i$ on $P$, for $i=1,\dots,k$. Moreover, let $e_i=(w_i,w_{i+1})$, for $i=1,\dots k-1$, be the total dense edge incident to $w_i$ and $w_{i+1}$. Next, suppose that $w_i$, for some $i\in [k]$, is a total dense vertex with $d(w_i)\leq k-2$. It follows that $d_t(w_i)\leq 2k-4 < q-1\leq m-1$, a contradiction to the fact that $w_i$ is total dense. Hence, if $w_i$ is total dense, then $d(w_i)\geq k-1$.

    Now, let $\mathbf{c}:V(T)\cup E(T)\rightarrow [2k-1]$ be a total colouring given by Algorithm \ref{algorithm:total-colouring-total-dense-path-type-1} executed over $T$ with $P$ as a parameter. We will show that each total dense element $x_i$ in $P$ is picking up colours $[2k-1]\setminus \{\mathbf{c}(x_i)\}$. By Line \ref{alg1:line:1-colouring-vertices} $\mathbf{c}(w_i)=2i-1$, for $i=1,\dots k$. First, we will show that vertex $w_i$, for $i=1,\dots,k$, is picking up colours $[2k-1]\setminus \{2i-1\}$.

    \paragraph{Vertex $w_1$} Since $w_1$ is total dense, it follows $d(w_1)\geq k-1$. Since $w_1$ is only adjacent to vertex $w_2$ in $P$ it follows that $|N(w_1)\setminus \{w_2\}|\geq k-2$. By the assignment made on Lines \ref{alg1:line:1-colouring-vertices}-\ref{alg1:line:2-colouring-edges} it follows that $w_1$ is picking up colours $\mathbf{c}(e_1)=2$ and $\mathbf{c}(w_2)=3$. Now, recall that $k$ is odd. We will partition $N_1$ into $N_1=N_1^{odd}\uplus N_1^{even}$ where
    $$N_1^{odd}=\{w_1^j:j=1,\dots,k-2\} \mbox{ and }N_1^{even}=\{w_1^j:j=2,\dots, k-3\}.$$
    Similarly, we will partition $E_1$ into $E_1=E_1^{odd}\uplus E_1^{even}$ where
    $$E_1^{odd}=\{e_1^j:j=1,\dots,k-2\} \mbox{ and }E_1^{even}=\{e_1^j:j=2,\dots, k-3\}.$$
    Now, observe that Line \ref{alg1:line:5-colouring-w_1^j-e_1^j-odd} assigns colours to $w_1^j\in N_{1}^{odd}$ and $e_1^j\in E_1^{odd}$. Hence $\mathbf{c}(N_1^{odd})=\{4,8,\dots, 2k-2\}$ and $\mathbf{c}(E_1^{odd})=\{5,9,\dots, 2k-1\}$. Similarly, Line \ref{alg1:line:8-colouring-w_1^j-e_1^j-even} assigns colours to $w_1^j\in N_1^{even}$ and $e_1^j\in E_1^{even}$. Observe that $\mathbf{c}(N_1^{even})=\{7,11,\dots,2k-3\}$ and $\mathbf{c}(E_1^{even})=\{6,10,\dots,2k-4\}$. Hence, $\mathbf{c}(N_1\cup E_1)=\{4,\dots, 2k-1\}$. Moreover, observe that $\mathbf{c}(N_1)\cap \mathbf{c}(E_1)=\emptyset$. Therefore, $w_1$ is not repeating a colour on its total neighbourhood.

    \paragraph{Vertex $w_i$ for $i=2,\dots, k-1$} Assume that $i$ is even. The proof for $i$ odd follows by an analogous argument. Since $w_i$ is total dense it follows that $d(w_i)\geq k-1$. Now, observe that $w_i$ is adjacent to $w_{i-1}$ and $w_{i+1}$ and incident to $e_{i-1}$ and $e_i$. By the assignments made in Lines \ref{alg1:line:1-colouring-vertices}-\ref{alg1:line:2-colouring-edges} $w_i$ is picking up colours $2i-3,2i-2,2i,2i+1$. Note that $w_i$ needs to pick up $2k-6$ colours. Now let $N_{\leftarrow i}=\{w_i^j\in N_i:1\leq j\leq i-2\}$ and $N_{i\rightarrow}=\{w_i^j\in N_i:i-1\leq j\leq k-3\}$ be the set of vertices adjacent to $w_i$ and not in $P$ that will be assigned the colours to the ``left'' and ``right'' of $w_i$, respectively. Note that $N_i=N_{\leftarrow i}\uplus N_{i \rightarrow}$ and that $|N_{\leftarrow i}|=i-2$ and $|N_{i\rightarrow}|=k-i-1$. Similarly, let $E_{\leftarrow i}=\{e_i^j\in E_i:1\leq j\leq i-2\}$ and $E_{i\rightarrow}=\{e_i^j\in E_i:i-1\leq j\leq k-3\}$ be the set of edges adjacent to $w_i$ and not in $P$ that will be assigned the colours to the ``left'' and ``right'' of $w_i$, respectively. Note that $E_i=E_{\leftarrow i}\uplus E_{i\rightarrow}$ and that $|E_{\leftarrow i}|=i-2$ and $|E_{i\rightarrow}|=k-i-1$. Observe that $|N_{\leftarrow i}\cup E_{\leftarrow i}|=2i-4$ and $|N_{i\rightarrow }\cup E_{i\rightarrow}|=2k-2i-2$. It follows that $|N_i\cup E_i|=2k-6$.

    Recall that $i$ is even and observe that $i-2$ is even. Next, let $N_{\leftarrow i}=N_{\leftarrow i}^{odd}\uplus N_{\leftarrow i}^{even}$ where 
    $$N_{\leftarrow i}^{odd}=\{w_i^j:j=1,3,\dots,i-3\} \mbox{ and } N_{\leftarrow i}^{even}=\{w_i^j:j=2,4,\dots,i-2\}.$$
    Similarly, let $E_{\leftarrow i}=E_{\leftarrow i}^{odd}\uplus E_{\leftarrow i}^{even}$ where 
    $$E_{\leftarrow i}^{odd}=\{e_i^j:j=1,3,\dots,i-3\} \mbox{ and } E_{\leftarrow i}^{even}=\{e_i^j:j=2,4,\dots,i-2\}.$$
    Observe that Line \ref{alg1:line:15-colouring-w_i^j-e_i^j-odd} assigns colours to $w_i^j\in N_{\leftarrow i}^{odd}$ and $e_i^j\in E_{\leftarrow i}^{odd}$. It follows that $\mathbf{c}(N_{\leftarrow i}^{odd})=\{2i-4,2i-8,\dots, 4\}$ and $\mathbf{c}(E_{\leftarrow i}^{odd})=\{2i-5,2i-9,\dots, 3\}$. Furthermore, $\mathbf{c}(N_{\leftarrow i}^{odd})\cap \mathbf{c}(E_{\leftarrow i}^{odd})=\emptyset$. Next, Line \ref{alg1:line:17-colouring-w_i^j-e_i^j-even} assigns colours to $w_i^j\in N_{\leftarrow i}^{even}$ and $e_i^j\in E_{\leftarrow i}^{even}$. It follows that $\mathbf{c}(N_{\leftarrow i}^{even})=\{2i-7,2i-11,\dots,1\}$ and $\mathbf{c}(E_{\leftarrow i}^{even})=\{2i-6,2i-10,\dots, 2\}$. Furthermore, $\mathbf{c}(N_{\leftarrow i}^{even})\cap \mathbf{c}(E_{\leftarrow i}^{even})=\emptyset$. Hence $\mathbf{c}(N_{\leftarrow i}\cup E_{\leftarrow i})=\{2i-4,\dots, 1\}$. Lastly, observe that $\mathbf{c}(N_{\leftarrow i}^{odd})\cap \mathbf{c}(N_{\leftarrow i}^{even})=\emptyset$ and $\mathbf{c}(E_{\leftarrow i}^{odd})\cap \mathbf{c}(E_{\leftarrow i}^{even})=\emptyset$.
    
    Recall that $k$ is odd and observe that $k-i-1$ is even. Next, let 
    $N_{i\rightarrow}=N_{i\rightarrow}^{odd}\uplus N_{i\rightarrow}^{even}$ where $$N_{i\rightarrow}^{odd}=\{w_i^{i+j-2}:j=1,\dots,k-i-2\} \mbox{ and }N_{i\rightarrow}^{even}=\{w_i^{i+j-2}:j=2,\dots, k-i-1\}.$$
    Similarly, $E_{i\rightarrow}=E_{i\rightarrow}^{odd}\uplus E_{i\rightarrow}^{even}$ where 
    $$E_{i\rightarrow}^{odd}=\{e_i^{i+j-2}:j=1,\dots,k-i-2\} \mbox{ and } E_{i\rightarrow}^{even}=\{e_i^{i+j-2}:j=2,\dots, k-i-1\}.$$
    Observe that Line \ref{alg1:line:22-colouring-w_i^j-e_i^j-odd} assigns colours to $w_i^{i+j-2}\in N_{i\rightarrow}^{odd}$ and $e_i^{i+j-2}\in E_{i\rightarrow}^{odd}$. It follows that $\mathbf{c}( N_{i\rightarrow}^{odd})=\{2i+2,2i+6,\dots, 2k-4\}$ and $\mathbf{c}(E_{i\rightarrow}^{odd})=\{2i+3,2i+7,\dots, 2k-3\}$. Furthermore, $\mathbf{c}(N_{i\rightarrow}^{odd})\cap \mathbf{c}(E_{i\rightarrow}^{odd})=\emptyset$. Then, Line \ref{alg1:line:24-colouring-w_i^j-e_i^j-even} assigns colours to $w_i^{i+j-2}\in  N_{i\rightarrow}^{even}$ and $e_i^{i+j-2}\in E_{i\rightarrow}^{even}$. It follows that $\mathbf{c}(N_{i\rightarrow}^{even})=\{2i+5,2i+9,\dots, 2k-1\}$ and $\mathbf{c}(E_{i\rightarrow}^{even})=\{2i+4,2i+8,\dots, 2k-2\}$. Furthermore, $\mathbf{c}(N_{i\rightarrow}^{even})\cap \mathbf{c}(E_{i\rightarrow}^{even})=\emptyset$. Hence, $\mathbf{c}(N_{i\rightarrow}\cup E_{i\rightarrow})=\{2i+2,\dots, 2k-1\}$. Next, observe that $\mathbf{c}(N_{\rightarrow i}^{odd})\cap \mathbf{c}(N_{\rightarrow i}^{even})=\emptyset$ and $\mathbf{c}(E_{\rightarrow i}^{odd})\cap \mathbf{c}(E_{\rightarrow i}^{even})=\emptyset$. Lastly, observe that $\mathbf{c}(N_{\leftarrow i}\cup E_{\leftarrow i})\cup \mathbf{c}(N_{i\rightarrow}\cup E_{i\rightarrow})=\{1,\dots,2i-4\}\cup \{2i+2,\dots, 2k-1\}$ and $\mathbf{c}(N_{\leftarrow i}\cup E_{\leftarrow i})\cap \mathbf{c}(N_{i\rightarrow}\cup E_{i\rightarrow})=\emptyset$. Therefore, $w_i$ is no repeating colour on its total neighbourhood. 

    \paragraph{Vertex $w_k$} Since $w_k$ is total dense, it follows $d(w_k)\geq k-1$. Since $w_k$ is only adjacent to vertex $w_{k-1}$ in $P$ it follows that $|N(w_k)\setminus \{w_{k-1}\}|\geq k-2$. By the assignment made on Lines \ref{alg1:line:1-colouring-vertices}-\ref{alg1:line:2-colouring-edges} it follows that $w_k$ is picking up colours $\mathbf{c}(e_{k-1})=2k-2$ and $\mathbf{c}(w_{k-2})=2k-3$. We will partition $N_k$ into  $N_k=N_k^{odd}\uplus N_k^{even}$ where
    $$N_k^{odd}=\{w_k^j:j=1,\dots,k-2\} \mbox{ and }N_k^{even}=\{w_k^j:j=2,\dots, k-3\}.$$
    Similarly, we will partition $E_k$ into $E_k=E_k^{odd}\uplus E_k^{even}$ where
    $$E_k^{odd}=\{e_k^j:j=1,\dots,k-2\} \mbox{ and }E_k^{even}=\{e_k^j:j=2,\dots, k-3\}.$$
    Now, observe that Line \ref{alg1:line:6-colouring-w_k^j-e_k^j-odd} assigns colours to $w_k^j\in N_k^{odd}$ and $e_k^j\in E_k^{odd}$. Hence, $\mathbf{c}(N_k^{odd})=\{2,6,\dots, 2k-4\}$ and $\mathbf{c}(E_k^{odd})=\{1,5,\dots, 2k-5\}$. Then, observe that Line \ref{alg1:line:9-colouring-w_k^j-e_k^j-even} assigns colours to $w_i^j\in N_k^{even}$ and $e_i^j\in E_k^{even}$. Observe that $\mathbf{c}(N_k^{even})=\{3,7,\dots, 2k-7\}$ and $\mathbf{c}(E_k^{even})=\{4,8,\dots, 2k-6\}$. Hence $\mathbf{c}(N_k\cup E_k)=\{1,\dots,2k-4\}$. Moreover, observe that $\mathbf{c}(N_k)\cap \mathbf{c}(E_k)=\emptyset$. Therefore, $w_k$ is not repeating a colour on its total neighbourhood.
    
    Next, by Line \ref{alg1:line:2-colouring-edges} $\mathbf{c}(e_i)=2i$, for $i=1,\dots,k-1$. We will now show that edge $e_i$, for $i=1,\dots,k-1$, is picking up colours $[2k-1]\setminus\{2i\}$ on its total neighbourhood. Furthermore, we will show that $e_i$ is not repeating colours.
    
    \paragraph{Edge $e_1$ is picking up $2k-2$ colours} Observe that $e_1=(w_1,w_2)$. From the assignment of colours to the total neighbourhood of $w_1$ it follows that $\mathbf{c}(E_1)=\{5,9,\dots, 2k-1\}\cup \{6,10,\dots, 2k-4\}$. Since $|E_{\leftarrow i}|=i-2$, for $i=2,\dots, k-1$, it follows that $E_{\leftarrow 2}=\emptyset$ and $E_2=E_{2\rightarrow}$. Next, from the assignment of colours to the total neighbourhood of $w_2$ observe that $\mathbf{c}(E_2)=\{7,11,\dots,2k-3\}\cup \{8,12,\dots, 2k-2\}$. Moreover, $\mathbf{c}(E_1)\cap \mathbf{c}(E_2)=\emptyset$. Therefore, $e_1$ is not repeating a colour on its total neighbourhood.

    \paragraph{Edge $e_i$ is picking up $2k-2$ colours, for $i=2,\dots, k-2$} From the assignment of colours on Lines \ref{alg1:line:1-colouring-vertices}-\ref{alg1:line:2-colouring-edges} observe that $e_i$ is picking up colours $2i-2,2i-1,2i+1,2i+2$ from $e_{i-1},w_i,w_{i+1},e_{i+1}$. It remains to prove that $e_i$ is picking up colours $\{1,\dots,2i-3\}\cup \{2i+3,\dots, 2k-1\}$ on elements $E_i\cup E_{i+1}$. Assume w.l.o.g.\ that $i$ is even. It follows that $i+1$ is odd. Furthermore, recall that $E_i=E_{\leftarrow i}\uplus E_{i\rightarrow}$ and, that $E_{\leftarrow i}=E_{\leftarrow i}^{odd}\uplus E_{\leftarrow i}^{even}$ and $E_{i\rightarrow}=E_{i\rightarrow}^{odd}\uplus E_{i\rightarrow}^{even}$. By the assignment of colours made on Lines \ref{alg1:line:15-colouring-w_i^j-e_i^j-odd}, \ref{alg1:line:17-colouring-w_i^j-e_i^j-even}, \ref{alg1:line:22-colouring-w_i^j-e_i^j-odd} and \ref{alg1:line:24-colouring-w_i^j-e_i^j-even}, it follows that
    \begin{align*}
        \mathbf{c}(E_i) &=\mathbf{c}(E_{\leftarrow i}^{odd})\cup \mathbf{c}(E_{\leftarrow i}^{even})\cup \mathbf{c}(E_{i\rightarrow}^{odd})\cup \mathbf{c}(E_{i\rightarrow}^{even})\\
        &=\{2i-5,\dots, 3\}\cup \{2i-6,\dots, 2\}\\
        &\cup \{2i+3,\dots, 2k-3\}\cup \{2i+4,\dots, 2k-2\}
    \end{align*}
    Furthermore, it is easy to check that $\mathbf{c}(E_{\leftarrow i}^{odd})\cap \mathbf{c}(E_{\leftarrow i}^{even})\cap \mathbf{c}(E_{i\rightarrow}^{odd})\cap \mathbf{c}(E_{i\rightarrow}^{even})=\emptyset$. Now let $\ell=i+1$ and observe that $E_{\ell}=E_{\leftarrow \ell}\uplus E_{\ell \rightarrow}$, where $|E_{\leftarrow \ell}|=\ell-2$ and $|E_{\ell\rightarrow}|=k-\ell-1$. Since $\ell$ is odd, it follows that $\ell-2$ and $k-\ell-1$ are both odd. Therefore,
    $$E_{\leftarrow \ell}^{odd}=\{e_{\ell}^j:j=1,\dots,\ell-2\}\mbox{ and }E_{\leftarrow \ell}^{even}=\{e_{\ell}^j:j=2,\dots, \ell-3\}.$$
    
    Line \ref{alg1:line:15-colouring-w_i^j-e_i^j-odd} assigns $\mathbf{c}(e_{\ell}^j)=2\ell-2j-3$ for $e_{\ell}^j\in E_{\leftarrow \ell}^{odd}$. It follows that $\mathbf{c}(E_{\leftarrow \ell}^{odd})=\{2i-3,\dots,1\}$. Similarly, Line \ref{alg1:line:17-colouring-w_i^j-e_i^j-even} assigns $\mathbf{c}(e_{\ell}^j)=2\ell-2j-2$, for $e_{\ell}^j\in E_{\leftarrow \ell}^{even}$. It follows that $\mathbf{c}(E_{\leftarrow \ell}^{even})=\{2i-4,\dots, 4\}$. Furthermore, observe that $\mathbf{c}(E_{\leftarrow \ell}^{odd})\cap \mathbf{c}(E_{\leftarrow \ell}^{even})=\emptyset$. Now, note that 
    $$E_{\ell \rightarrow}^{odd}=\{e_{\ell}^{\ell+j-2}:j=1,\dots,k-\ell-1\}\mbox{ and } E_{\ell\rightarrow}^{even}=\{e_{\ell}^{\ell+j-2}:j=2,\dots, k-\ell-2\}.$$
    Line \ref{alg1:line:22-colouring-w_i^j-e_i^j-odd} assigns $\mathbf{c}(e_{\ell}^{\ell+j-2})=2\ell+2j+1$ for $e_{\ell}^{\ell+k-2}\in E_{\ell \rightarrow}^{odd}$. It follows that $\mathbf{c}(E_{\ell\rightarrow}^{odd})=\{2i+5,\dots 2k-1\}$. Similarly, Line \ref{alg1:line:24-colouring-w_i^j-e_i^j-even} assigns $\mathbf{c}(e_{\ell}^{\ell+j-2})=2\ell+2j$ for $e_{\ell}^{\ell+j-2}\in E_{\ell \rightarrow}^{even}$. It follows that $\mathbf{c}(E_{\ell\rightarrow}^{even})=\{2i+6,\dots, 2k-4\}$. Furthermore, $\mathbf{c}(E_{\ell \rightarrow}^{odd})\cap \mathbf{c}(E_{\ell \rightarrow}^{even})=\emptyset$. By the assignment of colours done before, it follows that 
    \begin{align*}
        \mathbf{c}(E_{\ell})    &=\mathbf{c}(E_{\leftarrow \ell}^{odd})\cup \mathbf{c}(E_{\leftarrow \ell}^{even})\cup  \mathbf{c}(E_{\ell\rightarrow}^{odd})\cup  \mathbf{c}(E_{\ell\rightarrow}^{even})\\
                                &=\{2i-3,\dots,1\}\cup \{2i-4,\dots, 4\}\\
                                &\cup \{2i+5,\dots 2k-1\}\cup \{2i+6,\dots, 2k-4\}.
    \end{align*}
    Hence, $\mathbf{c}(E_i\cup E_{\ell})=\{1,\dots,2i-3\}\cup \{2i+3,\dots, 2k-1\}$. Furthermore, since $\mathbf{c}(E_i)\cap \mathbf{c}(E_{\ell})=\emptyset$ then $e_i$ is not repeating a colour on its total neighbourhood.

    \paragraph{Edge $e_{k-1}$ is picking up $2k-2$ colours} Observe that $e_{k-1}=(w_{k-1},w_k)$. Since $|E_{i\rightarrow}|=k-i-1$, for $i=2,\dots, k-1$, it follows that $E_{k-1\rightarrow}=\emptyset$ and $E_{k-1}=E_{\leftarrow k-1}$. Now, recall that $k$ is odd. It follows that $k-1$ is even. From the assignment of colours made to the total neighbourhood of $w_i$ observe that $\mathbf{c}(E_{k-1})=\{2k-7,2k-11,\dots,3\}\cup \{2k-8,2k-12,\dots, 2\}$. Next, from the assignment of colours made to the total neighbourhood of $w_k$ observe that $\mathbf{c}(E_k)=\{1,5,\dots, 2k-5\}\cup \{4,8,\dots, 2k-6\}$. Moreover $\mathbf{c}(E_{k-1})\cap \mathbf{c}(E_k)=\emptyset$. Therefore, $e_{k-1}$ is not repeating a colour on its total neighbourhood.
\end{proof}

\begin{proof}[Proof of Lemma \ref{lemma:total-colouring-total-dense-subpath-type-2}]
    Assume that $w_k$ is not total dense and recall that $P$ contains $2k-2=q\leq m$ total dense elements. The proof for $w_1$ not being total dense follows an analogous argument. Suppose that $d(w_i)\leq k-2$, for some $i\in [k-1]$. It follows that $d_t(w_i)\leq 2k-4<2k-3=q-1\leq m-1$, a contradiction to the fact that $w_i$ is total dense. Therefore, $d(w_i)\geq k-1$, for $i=1,\dots,k-1$.  By Lines \ref{alg2:line:1-setting-d} and \ref{alg2:line:2-setting-P'}, $P'=\langle w_1,\dots,w_{k'}\rangle$ where $k'=k-1$. By Line \ref{alg2:line:3-getting-colouring-of-P'}, let $\mathbf{c}$ be a total colouring such that elements of $T[P']$ are given colours $1,\dots,2k'-1$. By Lemma \ref{lemma:total-colouring-total-dense-subpath-type-1}, every total dense element in $T[P']$ is picking up $2k'-2=2k-4$ colours. Next, Line \ref{alg2:line:8-assigns-colour-2k-2-to-ek'} assigns colour $\mathbf{c}(e_{k-1})=2k-2$. Observe that vertex $w_{k-1}$ and edge $e_{k-2}$ are picking up colour $2k-2$. Therefore, $w_{k-1}$ and $e_{k-2}$ are picking up $2k-2$ colours. It remains to prove that vertex $w_i$, for $i=1,\dots,k-2$ and edge $e_i$, for $i=1,\dots,k-3$, are picking up colour $2k-2$.

   By the assignment of colours made by Algorithm \ref{algorithm:total-colouring-total-dense-path-type-1}, vertex $w_1$ is picking up $2k-4$ colours from $N_1\cup E_1\cup \{w_2,e_1\}$ where $N_1=\{w_1^j:j=1,\dots,k'-2\}$ and $N_1=\{e_1^j:j=1,\dots,k'-2\}$. Observe that $|N_1\cup \{w_1\}|=k'-1=k-2$ vertices adjacent to $w_1$ are already coloured. Furthermore, $|E_1\cup \{e_1\}|=k'-1=k-2$ edges incident to $w_1$ are already coloured. It follows that there exists at least one uncoloured vertex adjacent to $w_1$ and one uncoloured edge incident to $w_1$, respectively. Let $w_1^{k-1}$ be the uncoloured vertex adjacent to $w_1$ and $e_1^{k-1}$ be the uncoloured edge incident to $w_1$, respectively. If $k$ is even, then Line \ref{alg2:line:21-colour-assignment} assigns colour $\mathbf{c}(e_1^{k-1})=2k-2$. Otherwise, Line \ref{alg2:line:15-colour-assignment} assigns colour $\mathbf{c}(u_1^{k-1})=2k-2$.
    
   By the assignments of colours made by Algorithm \ref{algorithm:total-colouring-total-dense-path-type-1}, vertex $w_i$, for $i=2,\dots, k'-1$, is picking up $2k-4$ colours from $N_i\cup E_i\cup \{w_{i-1},w_{i+1},e_{i-1},e_{i+1}\}$ where $N_i=\{w_i^j:j=1,\dots,k'-3\}$ and $E_i=\{e_i^j:j=1,\dots,k'-3\}$. Observe that $|N_i\cup \{w_{i-1},w_{i+1}|k'-1=k-2$ vertices adjacent to $w_i$ are already coloured. Furthermore, $|E_i\cup \{e_{i-1},e_{i+1}|k'-1=k-2$ edges incident to $w_i$ are already coloured. It follows that there exists at least one uncoloured vertex adjacent to $w_i$ and one uncoloured edge incident to $w_i$, respectively. Let $w_i^{k-1}$ be the uncoloured vertex adjacent to $w_i$ and $e_i^{k-1}$ be the uncoloured edge incident to $w_i$, respectively. If $i$ is even, then Line \ref{alg2:line:21-colour-assignment} assigns colour $\mathbf{c}(e_i^{k-1})=2k-2$. Otherwise, Line \ref{alg2:line:15-colour-assignment} assigns colour $\mathbf{c}(w_i^{k-1})=2k-2$. Hence, $w_i$ is picking up colour $2k-2$ on its total neighbourhood. Lastly, observe that if $i$ is even, then $e_i$ is picking up colour $2k-2$ from $e_i^{k-1}$. Otherwise, $e_i$ is picking up colour $2k-2$ from $e_{i+1}^{k-1}$.
\end{proof}

\begin{proof}[Proof of Lemma \ref{lemma:total-colouring-total-dense-subpath-type-3}]
    First, recall that $P$ contains $2k-3=q\leq m$ total dense elements. Suppose that $d(w_i)\leq k-3$, for some $i\in \{2,\dots, k-1\}$. It follows that $d_t(w_i)\leq 2k-6<2k-4=q-1\leq m-1$, a contradiction to the fact that $w_i$ is total dense. Therefore, $d(w_i)\geq k-2$, for $i=2,\dots, k-1$. By Line \ref{alg3:line:1-setting-P'}, $k'=k-2$ and $P'=\langle u_1,\dots,u_{k'}\rangle$ for $i=1,\dots,k'$. By Line \ref{alg3:line:2-getting-c}-\ref{alg3:line:3-modifying-c}, let $\mathbf{c}$ be a total colouring such that elements of $T[P']$ are given colours $2,\dots,2k-4$. By Lemma \ref{lemma:total-colouring-total-dense-subpath-type-1}, every total dense element in $T[P']$ is picking up $2k'-2=2k-6$ colours. Next, Lines \ref{alg3:line:4-assign-colour-to-e_2^k-2}-\ref{alg3:line:5-assign-colour-to-e_k-1^k-2} assign colours $\mathbf{c}(e_2^{k-2})=2k-3$ and $\mathbf{c}(e_{k-1}^{k-2})=1$. Observe that vertices $w_{k-1}$ and $w_2$ are picking up $2k-4$ colours, respectively. Furthermore, edges $e_{2}$ and $e_{k-2}$ are picking up $2k-4$ colours. We will prove that vertex $w_i$, for $i=3,\dots, k-2$, and edge $e_i$, for $i=3,\dots, k-3$, are picking up colours 1 and $2k-3$.

    By the assignment of colours made by Algorithm \ref{algorithm:total-colouring-total-dense-path-type-3}, vertex $w_i$, for $i=3,\dots, k-3$, is picking up $2k-6$ colours from elements $N_i\cup E_i\cup \{w_{i-1},w_{i+1},e_{i-1},e_{i+1}\}$ where $N_i=\{w_i^j:j=1,\dots,k'-3\}$ and $E_i=\{e_i^j:j=1,\dots,k'-3\}$. Observe that $|N_i\cup \{w_{i-1},w_{i+1}\}|=k'-1=k-3$ vertices adjacent to $w_i$ are coloured. Similarly, $|E_i\cup \{e_{i-1},e_{i+1}\}|=k'-1=k-3$ edges incident to $w_i$ are coloured. It follows that there exists at least one uncoloured vertex adjacent to $w_i$ and one uncoloured edge incident to $w_i$, respectively. Let $w_i^{k-2}$ be the uncoloured vertex adjacent to $w_i$ and $e_i^{k-2}$ be the uncoloured edge incident to $w_i$, respectively. If $i$ is even then Line \ref{alg3:line:12-assign-colour} assign colours $\mathbf{c}(w_i^{k-2})=2k-3$ and $\mathbf{c}(e_{i}^{k-2})=1$. Otherwise, Line \ref{alg3:line:10-assign-colour} assign colours $\mathbf{c}(e_i^{k-2})=2k-3$ and $\mathbf{c}(w_{i}^{k-2})=1$. Observe that $w_i$ is picking up $2k-4$ colours, for $i=3,\dots, k-2$. Next, if $i$ is even, then $e_i$ is picking up colours 1 and $2k-3$ from edges $e_{i}^{k-2}$ and $e_{i+1}^{k-2}$. Otherwise, $e_i$ is picking up colours 1 and $2k-3$ from edges $e_{i+1}^{k-2}$ and $e_{i}^{k-2}$. Hence, $e_i$ is picking up $2k-3$ colours, for $i=3,\dots, k-3$.
\end{proof}

\newpage
\section{Figures omitted in Section \ref{section:polynomial-time-algorithm-caterpillars}} \label{appendix:section:figures}
In this section, we present several figures that were omitted from Section \ref{section:polynomial-time-algorithm-caterpillars}. An example of a total pivoted caterpillar of type 1 can be observed in Figure \ref{fig:total-pivoted-caterpillar-type-1}. Example of total pivoted caterpillar of type 2 where $\ell(Q)=0$ and $\ell(Q)=1$ can be observed in Figure \ref{fig:total-pivoted-caterpillar-type-2-a} and \ref{fig:total-pivoted-caterpillar-type-2-b}, respectively.
\begin{figure}[ht]
    \centering
    \resizebox{.6\linewidth}{!}{
        \begin{tikzpicture}
            \node[draw, circle, fill=black, minimum size=0.5cm, label=above:$u_1$] (a) at (0, 0) {};
            \node[draw, circle, fill=black, minimum size=0.5cm, label=above:$u$, label=below:$1$] (b) at (2.5, 0) {};
            \node[draw, circle, fill=black, minimum size=0.5cm, label=above:$u_7$] (c) at (5, 0) {};
            \node[draw, circle, fill=black, minimum size=0.5cm, label=above:$v$, label=below:$9$] (d) at (7.5, 0) {};
            \node[draw, circle, fill=black, minimum size=0.5cm, label=above:{}] (e) at (10, 0) {};
            \node[draw, circle, fill=black, minimum size=0.5cm, label=below:$u_2$] (f) at (1.5, -2.5) {};
            \node[draw, circle, fill=black, minimum size=0.5cm, label=below:$u_6$] (g) at (3.5, -2.5) {};
            \node[draw, circle, fill=black, minimum size=0.5cm, label=below:{}] (h) at (6.5, -2.5) {};
            \node[draw, circle, fill=black, minimum size=0.5cm, label=below:{}] (i) at (8.5, -2.5) {};
        
            \node at (2.5, -2.5) {\Huge$\cdot\cdot\cdot$};
        
            \node at (-0.9, 0) {\Huge$\cdot\cdot\cdot$};
        
            \node at (10.9, 0) {\Huge$\cdot\cdot\cdot$};
        
            \draw[thick] (a) -- (b) node[midway, above] {2};
            \draw[thick] (b) -- (c) node[midway, above] {8};
            \draw[thick] (c) -- (d);
            \draw[thick] (d) -- (e);
            \draw[thick] (b) -- (f) node[midway, left] {3};
            \draw[thick] (b) -- (g) node[midway, right] {7};
            \draw[thick] (d) -- (h);
            \draw[thick] (d) -- (i);
        \end{tikzpicture}
    }
    \caption{Total pivoted caterpillar of type 1 with $m_t(T)=9$. Note that $d(u)=7$ and $d(v)=4$. Observe that $(u,u_7)$ needs to pick up colour 9 on its total neighbourhood but neither $u_7$ nor $(u_7,v)$ can be assigned colour 9.}
    \label{fig:total-pivoted-caterpillar-type-1}
\end{figure}
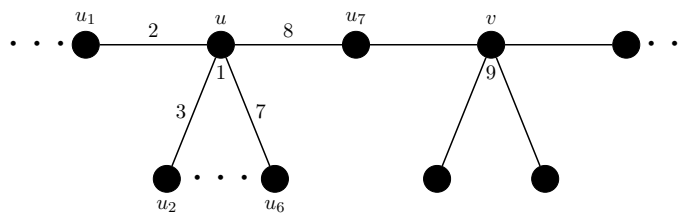

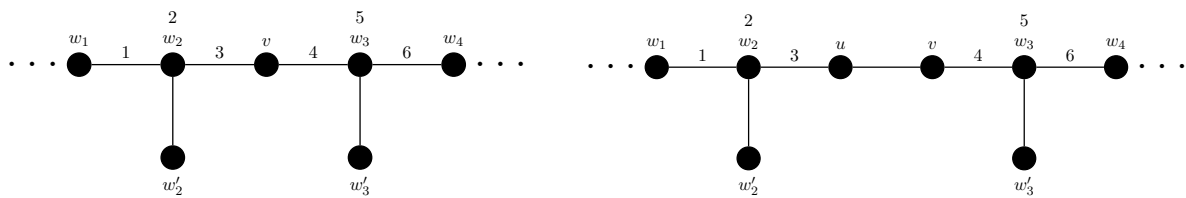
\begin{figure}[ht]
    \centering
    \begin{subfigure}[b]{0.45\textwidth}
        \centering
        \resizebox{1\linewidth}{!}{
            \begin{tikzpicture}[thick]
                \node[draw, circle, fill=black, minimum size=0.5cm, inner sep=0pt, label=above:$w_1$] (w1) at (0, 2) {};
                \node[draw, circle, fill=black, minimum size=0.5cm, inner sep=0pt, label={[yshift=0.5cm]above:$2$}, label=above:$w_2$] (w2) at (2, 2) {};
                \node[draw, circle, fill=black, minimum size=0.5cm, inner sep=0pt, label=above:$v$] (v) at (4, 2) {};
                \node[draw, circle, fill=black, minimum size=0.5cm, inner sep=0pt, label={[yshift=0.5cm]above:$5$}, label=above:$w_3$] (w3) at (6, 2) {};
                \node[draw, circle, fill=black, minimum size=0.5cm, inner sep=0pt, label=above:$w_4$] (w4) at (8, 2) {};
                
                \node[draw, circle, fill=black, minimum size=0.5cm, inner sep=0pt, label=below:$w_{2}'$] (w2a) at (2, 0) {};
                \node[draw, circle, fill=black, minimum size=0.5cm, inner sep=0pt, label=below:$w_{3}'$] (w3a) at (6, 0) {};
                
                \node at (-1, 2) {\Huge$\cdot\cdot\cdot$};
                
                \node at (9, 2) {\Huge$\cdot\cdot\cdot$};
                
                \draw (w1) -- (w2) node[midway, above] {1};
                \draw (w2) -- (v) node[midway, above] {3};
                \draw (v) -- (w3) node[midway, above] {4};
                \draw (w3) -- (w4) node[midway, above] {6};
                
                \draw (w2) -- (w2a);
                \draw (w3) -- (w3a);
                
                \end{tikzpicture}
        }
        \caption{Note that $v$ cannot be assigned a colour in a total 6-colouring of $T$.}
        \label{fig:total-pivoted-caterpillar-type-2-a}
    \end{subfigure}
    \hspace{.2cm}
    \begin{subfigure}[b]{0.45\textwidth}
        \centering
        \resizebox{1.15\linewidth}{!}{
            \begin{tikzpicture}[thick]
                \node[draw, circle, fill=black, minimum size=0.5cm, inner sep=0pt, label=above:$w_1$] (w1) at (0, 2) {};
                \node[draw, circle, fill=black, minimum size=0.5cm, inner sep=0pt, label={[yshift=0.5cm]above:$2$}, label=above:$w_2$] (w2) at (2, 2) {};
                \node[draw, circle, fill=black, minimum size=0.5cm, inner sep=0pt, label=above:$u$] (u) at (4, 2) {};
                \node[draw, circle, fill=black, minimum size=0.5cm, inner sep=0pt, label=above:$v$] (v) at (6, 2) {};
                \node[draw, circle, fill=black, minimum size=0.5cm, inner sep=0pt, label={[yshift=0.5cm]above:$5$}, label=above:$w_3$] (w3) at (8, 2) {};
                \node[draw, circle, fill=black, minimum size=0.5cm, inner sep=0pt, label=above:$w_4$] (w4) at (10, 2) {};
                
                \node[draw, circle, fill=black, minimum size=0.5cm, inner sep=0pt, label=below:$w_{2}'$] (w2a) at (2, 0) {};
                \node[draw, circle, fill=black, minimum size=0.5cm, inner sep=0pt, label=below:$w_{3}'$] (w3a) at (8, 0) {};
                
                \node at (-1, 2) {\Huge$\cdot\cdot\cdot$};
                
                \node at (11, 2) {\Huge$\cdot\cdot\cdot$};
                
                \draw (w1) -- (w2) node[midway, above] {1};
                \draw (w2) -- (u) node[midway, above] {3};
                \draw (u) -- (v);
                \draw (v) -- (w3) node[midway, above] {4};
                \draw (w3) -- (w4) node[midway, above] {6};
                
                \draw (w2) -- (w2a);
                \draw (w3) -- (w3a);
                
                \end{tikzpicture}
        }
        \caption{Note that $(u,v)$ cannot be assigned a colour in a total 6-colouring of $T$.}
        \label{fig:total-pivoted-caterpillar-type-2-b}
    \end{subfigure}
    \caption{Total pivoted caterpillar of type 2.}
    \label{fig:total-pivoted-caterpillar-type-2}
\end{figure}

The colouring produced by Algorithm \ref{algorithm:total-colouring-total-dense-path-type-1} for $k=6$ can be observed in Figure \ref{fig:total-b-chromatic-colouring-running-example-algorithm}.

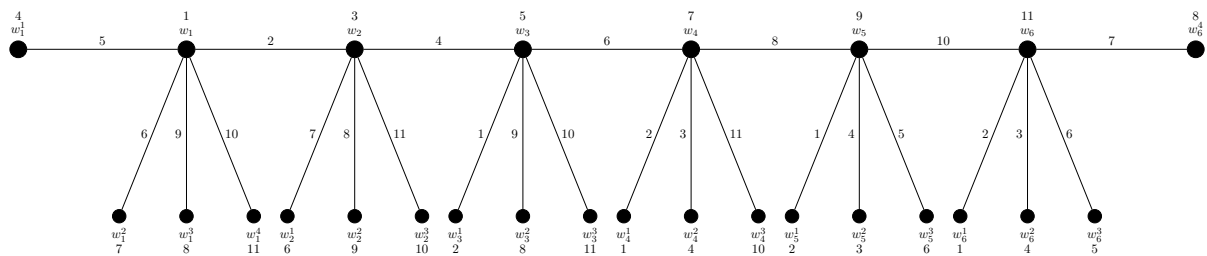
\begin{figure}[ht]
        \centering
        \resizebox{1\linewidth}{!}{
            \begin{tikzpicture}
                \node[draw, circle, fill=black, minimum size=0.5cm, inner sep=0pt, label=above:{$w_1^1$}, label={[yshift=0.5cm]above:{$4$}}] (w1_1) at (-5, 0) {};
                \node[draw, circle, fill=black, minimum size=0.5cm, inner sep=0pt, label=above:{$w_1$}, label={[yshift=0.5cm]above:{$1$}}] (w1) at (0, 0) {};
                \node[draw, circle, fill=black, minimum size=0.5cm, inner sep=0pt, label=above:{$w_2$}, label={[yshift=0.5cm]above:{$3$}}] (w2) at (5, 0) {};
                \node[draw, circle, fill=black, minimum size=0.5cm, inner sep=0pt, label=above:{$w_3$}, label={[yshift=0.5cm]above:{$5$}}] (w3) at (10, 0) {};
                \node[draw, circle, fill=black, minimum size=0.5cm, inner sep=0pt, label=above:{$w_4$}, label={[yshift=0.5cm]above:{$7$}}] (w4) at (15, 0) {};
                \node[draw, circle, fill=black, minimum size=0.5cm, inner sep=0pt, label=above:{$w_5$}, label={[yshift=0.5cm]above:{$9$}}] (w5) at (20, 0) {};
                \node[draw, circle, fill=black, minimum size=0.5cm, inner sep=0pt, label=above:{$w_6$}, label={[yshift=0.5cm]above:{$11$}}] (w6) at (25, 0) {};
                \node[draw, circle, fill=black, minimum size=0.5cm, inner sep=0pt, label=above:{$w_6^4$}, label={[yshift=0.5cm]above:{$8$}}] (w6_4) at (30, 0) {};
            
                \draw[thick] (w1_1) -- (w1) node[midway, above] {$5$};
                \draw[thick] (w1) -- (w2) node[midway, above] {$2$};
                \draw[thick] (w2) -- (w3) node[midway, above] {$4$};
                \draw[thick] (w3) -- (w4) node[midway, above] {$6$};
                \draw[thick] (w4) -- (w5) node[midway, above] {$8$};
                \draw[thick] (w5) -- (w6) node[midway, above] {$10$};
                \draw[thick] (w6) -- (w6_4) node[midway, above] {$7$};
            
                \node[draw, circle, fill=black, minimum size=0.4cm, inner sep=0pt, label=below:{$w_1^2$}, label={[yshift=-0.5cm]below:{$7$}}] (w1_2_below) at (-2, -5) {};
                \node[draw, circle, fill=black, minimum size=0.4cm, inner sep=0pt, label=below:{$w_1^3$}, label={[yshift=-0.5cm]below:{$8$}}] (w1_3_below) at (0, -5) {};
                \node[draw, circle, fill=black, minimum size=0.4cm, inner sep=0pt, label=below:{$w_1^4$}, label={[yshift=-0.5cm]below:{$11$}}] (w1_4_below) at (2, -5) {};
                \draw[thick] (w1) -- (w1_2_below) node[midway, left] {$6$};
                \draw[thick] (w1) -- (w1_3_below) node[midway, left] {$9$};
                \draw[thick] (w1) -- (w1_4_below) node[midway, right] {$10$};
            
                \node[draw, circle, fill=black, minimum size=0.4cm, inner sep=0pt, label=below:{$w_2^1$}, label={[yshift=-0.5cm]below:{$6$}}] (w2_1_below) at (3, -5) {};
                \node[draw, circle, fill=black, minimum size=0.4cm, inner sep=0pt, label=below:{$w_2^2$}, label={[yshift=-0.5cm]below:{$9$}}] (w2_2_below) at (5, -5) {};
                \node[draw, circle, fill=black, minimum size=0.4cm, inner sep=0pt, label=below:{$w_2^3$}, label={[yshift=-0.5cm]below:{$10$}}] (w2_3_below) at (7, -5) {};
                \draw[thick] (w2) -- (w2_1_below) node[midway, left] {$7$};
                \draw[thick] (w2) -- (w2_2_below) node[midway, left] {$8$};
                \draw[thick] (w2) -- (w2_3_below) node[midway, right] {$11$};
            
                \node[draw, circle, fill=black, minimum size=0.4cm, inner sep=0pt, label=below:{$w_3^1$}, label={[yshift=-0.5cm]below:{$2$}}] (w3_1_below) at (8, -5) {};
                \node[draw, circle, fill=black, minimum size=0.4cm, inner sep=0pt, label=below:{$w_3^2$}, label={[yshift=-0.5cm]below:{$8$}}] (w3_2_below) at (10, -5) {};
                \node[draw, circle, fill=black, minimum size=0.4cm, inner sep=0pt, label=below:{$w_3^3$}, label={[yshift=-0.5cm]below:{$11$}}] (w3_3_below) at (12, -5) {};
                \draw[thick] (w3) -- (w3_1_below) node[midway, left] {$1$};
                \draw[thick] (w3) -- (w3_2_below) node[midway, left] {$9$};
                \draw[thick] (w3) -- (w3_3_below) node[midway, right] {$10$};
            
                \node[draw, circle, fill=black, minimum size=0.4cm, inner sep=0pt, label=below:{$w_4^1$}, label={[yshift=-0.5cm]below:{$1$}}] (w4_1_below) at (13, -5) {};
                \node[draw, circle, fill=black, minimum size=0.4cm, inner sep=0pt, label=below:{$w_4^2$}, label={[yshift=-0.5cm]below:{$4$}}] (w4_2_below) at (15, -5) {};
                \node[draw, circle, fill=black, minimum size=0.4cm, inner sep=0pt, label=below:{$w_4^3$}, label={[yshift=-0.5cm]below:{$10$}}] (w4_3_below) at (17, -5) {};
                \draw[thick] (w4) -- (w4_1_below) node[midway, left] {$2$};
                \draw[thick] (w4) -- (w4_2_below) node[midway, left] {$3$};
                \draw[thick] (w4) -- (w4_3_below) node[midway, right] {$11$};
            
                \node[draw, circle, fill=black, minimum size=0.4cm, inner sep=0pt, label=below:{$w_5^1$}, label={[yshift=-0.5cm]below:{$2$}}] (w5_1_below) at (18, -5) {};
                \node[draw, circle, fill=black, minimum size=0.4cm, inner sep=0pt, label=below:{$w_5^2$}, label={[yshift=-0.5cm]below:{$3$}}] (w5_2_below) at (20, -5) {};
                \node[draw, circle, fill=black, minimum size=0.4cm, inner sep=0pt, label=below:{$w_5^3$}, label={[yshift=-0.5cm]below:{$6$}}] (w5_3_below) at (22, -5) {};
                \draw[thick] (w5) -- (w5_1_below) node[midway, left] {$1$};
                \draw[thick] (w5) -- (w5_2_below) node[midway, left] {$4$};
                \draw[thick] (w5) -- (w5_3_below) node[midway, right] {$5$};
            
                \node[draw, circle, fill=black, minimum size=0.4cm, inner sep=0pt, label=below:{$w_6^1$}, label={[yshift=-0.5cm]below:{$1$}}] (w6_1_below) at (23, -5) {};
                \node[draw, circle, fill=black, minimum size=0.4cm, inner sep=0pt, label=below:{$w_6^2$}, label={[yshift=-0.5cm]below:{$4$}}] (w6_2_below) at (25, -5) {};
                \node[draw, circle, fill=black, minimum size=0.4cm, inner sep=0pt, label=below:{$w_6^3$}, label={[yshift=-0.5cm]below:{$5$}}] (w6_3_below) at (27, -5) {};
                \draw[thick] (w6) -- (w6_1_below) node[midway, left] {$2$};
                \draw[thick] (w6) -- (w6_2_below) node[midway, left] {$3$};
                \draw[thick] (w6) -- (w6_3_below) node[midway, right] {$6$};
            \end{tikzpicture}
        }
        \caption{Colouring produced by Algorithm \ref{algorithm:total-colouring-total-dense-path-type-1}. Note that $P'=\langle w_1,\dots,w_6\rangle$ and $m_t(T)=11$.}
        \label{fig:total-b-chromatic-colouring-running-example-algorithm}
\end{figure}  

\newpage
\section{Algorithms omitted in Section \ref{section:polynomial-time-algorithm-caterpillars}}\label{appendix:section:algorithms}
\begin{algorithm}[ht]
    \caption{Algorithm for colouring $T[P]$, where $P=\langle w_1,w_2,\dots, w_k \rangle$ and $k\geq 4$, using $2k-2$ colours}
    \label{algorithm:total-colouring-total-dense-path-type-2}
    \begin{algorithmic}[1]
        \State Let $d=1$ if $w_1$ is not total dense and $d=0$ otherwise \label{alg2:line:1-setting-d}
        \State Let $P'=\langle u_1,\dots,u_{k'}\rangle$ where $u_i=w_{i+d}$ for $i=1,\dots,k'$\Comment{$k'=k-1$}\label{alg2:line:2-setting-P'}
        \State Let $\mathbf{c}$ be a partial total colouring of $T[P']$ produced by Algorithm $\ref{algorithm:total-colouring-total-dense-path-type-1}$\label{alg2:line:3-getting-colouring-of-P'}
        \If{$d=1$}\label{alg2:line:4-if-w_1-is-not-total-dense}
            \State $\mathbf{c}(e_1)\gets 1$\label{alg2:line:5-assigns-colour-1-to-e1}\Comment{assigns colour 1 to the boundary edege $e_1$ of $P$}
            \State $\mathbf{c}(x)\gets 1+\mathbf{c}(x)$ for every $x\in T[P']$ if $\mathbf{c}(x)\neq \emptyset$\Comment{$\mathbf{c}(T[P'])=\{2,\dots, 2k-2$\} if $w_1$ is not total dense}\label{alg2:line:6-changing-colouring-of-P'}
        \Else \label{alg2:line:7}
            \State $\mathbf{c}(e_{k'})\gets 2k-2$\label{alg2:line:8-assigns-colour-2k-2-to-ek'}\Comment{assigns colour $2k-2$ to the boundary edge $e_k$ of $P$}
        \EndIf\label{alg2:line:9}
        \For{$i=1,\dots,k'-1$}\label{alg2:line:10-for-loop-vertex-u_i}
            \If{$i\mod 2=1$}\label{alg2:line:11-if-j-is-odd}
                \If{$d=1$}\Comment{$w_1$ is not total dense}\label{alg2:line:12-if-d-equals-1}
                    \State $\mathbf{c}\left(u_{i+1}^{k'}\right)\gets 1$\label{alg2:line:13-colour-assignment}\Comment{$w_{i+1}$ picks up colour 1}
                \Else \Comment{$w_k$ is not total dense}\label{alg2:line:14-if-d-equals-0}
                    \State $\mathbf{c}\left(u_{k-i-1}^{k'}\right)\gets 2k-2$\label{alg2:line:15-colour-assignment}\Comment{$w_{k-i-1}$ picks up colour $2k-2$}
                \EndIf\label{alg2:line:16}
            \Else\label{alg2:line:17-if-i-is-even}
                \If{$d=1$} \Comment{$w_1$ is not total dense}\label{alg2:line:18-if-d-equals-1}
                    \State $\mathbf{c}\left(e_{i+1}^{k'}\right)\gets 1$\label{alg2:line:19-colour-assignment}\Comment{$e_{i+1}$ picks up colour 1}
                \Else \Comment{$w_k$ is not total dense}\label{alg2:line:20-if-d-equals-0}
                    \State $\mathbf{c}\left(e_{k-i-1}^{k'}\right)\gets 2k-2$\label{alg2:line:21-colour-assignment}\Comment{$e_{k-i-1}$ picks up colour $2k-2$}
                \EndIf\label{alg2:line:22}
            \EndIf\label{alg2:line:23}
        \EndFor\label{alg2:line:24}
    \end{algorithmic}
\end{algorithm}

\begin{algorithm}[ht]
    \caption{Algorithm for colouring $T[P]$, where $P=\langle w_1,w_2,\dots, w_k \rangle$ and $k\geq 5$, using $2k-3$ colours.}
    \label{algorithm:total-colouring-total-dense-path-type-3}
    \begin{algorithmic}[1]
        \State Let $k'=k-2$ and $P'=\langle u_1,\dots,u_{k'}\rangle$ where $u_i=w_{i+1}$ for $i=1,\dots,k'$\Comment{$k'$ is the number of total dense vertices of $P'$}\label{alg3:line:1-setting-P'}
        \State Let $\mathbf{c}$ be a partial total colouring of $T[P']$ produced by Algorithm $\ref{algorithm:total-colouring-total-dense-path-type-1}$\label{alg3:line:2-getting-c}
        \State $\mathbf{c}(x)\gets 1+\mathbf{c}(x)$ for every $x\in T[P']$ if $\mathbf{c}(x)\neq \emptyset$\Comment{$\mathbf{c}(T[P'])=\{2,\dots, 2k-4$\}}\label{alg3:line:3-modifying-c}
        \State $\mathbf{c}\left(e_2^{k-2}\right)\gets 2k-3$\label{alg3:line:4-assign-colour-to-e_2^k-2}\Comment{$e_2$ picks up colour $2k-3$}
        \State $\mathbf{c}\left(e_{k-1}^{k-2}\right)\gets 1$\label{alg3:line:5-assign-colour-to-e_k-1^k-2} \Comment{$e_{k-1}$ picks up colour $1$}
        \For{$i=3,\dots, k-2$}\label{alg3:line:8-for-loop}
            \If{$i\mod 2=1$}\label{alg3:line:9-if-i-is-odd}
                \State $\mathbf{c}\left(w_{i}^{k-2}\right)\gets 1$ and $\mathbf{c}\left(e_{i}^{k-2}\right)\gets 2k-3$\label{alg3:line:10-assign-colour}\Comment{$w_i$ picks up colour 1 and $2k-3$}
            \Else\label{alg3:line:11-if-i-is-even}
                \State $\mathbf{c}\left(w_{i}^{k-2}\right)\gets 2k-3$ and $\mathbf{c}\left(e_{i}^{k-2}\right)\gets 1$\label{alg3:line:12-assign-colour}\Comment{$w_i$ picks up colour $2k-3$ and 1}
            \EndIf\label{alg3:line:13-end-if}
        \EndFor\label{alg3:line:14-end-for}
    \end{algorithmic}
\end{algorithm}

\end{document}